\newtheorem{thm}{Theorem}[section]
\newtheorem{cor}[thm]{Corollary}
\newtheorem{prop}[thm]{Proposition}
\newtheorem{lem}[thm]{Lemma}
\newtheorem*{thm*}{Theorem}
\theoremstyle{definition}
\newtheorem{defn}[thm]{Definition}
\newtheorem{oss}[thm]{Observation}
\newtheorem{ese}[thm]{Example}
\newtheorem{rmk}[thm]{Remark}
\newtheorem*{nota}{Notation}
\newtheorem*{guess}{Guess}
\newtheorem*{rmk*}{Remark}
\newcommand{\bequa}{\begin{equation}}
\newcommand{\eequa}{\end{equation}}
\newcommand{\bthm}{\begin{thm}}
\newcommand{\bproof}{\begin{proof}}
\newcommand{\eproof}{\end{proof}}
\newcommand{\ethm}{\end{thm}}
\newcommand{\bdefn}{\begin{defn}}
\newcommand{\edefn}{\end{defn}}
\newcommand{\bprop}{\begin{prop}}
\newcommand{\eprop}{\end{prop}}
\newcommand{\bcor}{\begin{cor}}
\newcommand{\ecor}{\end{cor}}                   
\newcommand{\blem}{\begin{lem}}
\newcommand{\elem}{\end{lem}}
\newcommand{\bese}{\begin{ese}}
\newcommand{\eese}{\end{ese}}
\newcommand{\boss}{\begin{oss}}                 
\newcommand{\eoss}{\end{oss}}
\newcommand{\bnota}{\begin{nota}}
\newcommand{\enota}{\end{nota}}
\newcommand{\bguess}{\begin{guess}}
\newcommand{\eguess}{\end{guess}}
\newcommand{\brmk}{\begin{rmk}}
\newcommand{\ermk}{\end{rmk}}
\newcommand{\dsR}{\mathds{R}}
\newcommand{\dsZ}{\mathds{Z}}
\newcommand{\dsC}{\mathds{C}}
\newcommand{\dsT}{\mathds{T}}
\newcommand{\clC}{\mathcal{C}}
\newcommand{\clF}{\mathcal{F}}
\newcommand{\clH}{\mathcal{H}}
\newcommand{\clL}{\mathcal{L}}
\newcommand{\clW}{\mathcal{W}}
\newcommand\norm[2]{{\Vert{#1}\Vert_{#2}}}
\newcommand\quant{\advance\quantno by1
                     
\ifnum\quantno=1\qquad\else\quad\fi\forall }
\newcommand\opnorm[2]{|\!|\!| {#1} |\!|\!|_{#2}}
\DeclareMathOperator{\ch}{Ch}
\DeclareMathOperator{\sh}{Sh}
\newcommand{\lt}{\left(}
\newcommand{\rt}{\right)}
\newcommand{\lgra}{\left\{}
\newcommand{\rgra}{\right\}}
\def\im{\operatorname{Im}}
\def\re{\operatorname{Re}}
\newcommand{\pih}{\frac{\pi}{2}}
\def\p{\partial }
\def\II{I\!I}
 \title[Hardy Spaces and the Szeg\H{o} projection of the non-smooth worm domain $D'_\beta$]{Hardy Spaces and the Szeg\H{o} projection \\of the non-smooth worm domain $D'_\beta$}
 \author{Alessandro Monguzzi}
 \date{\today}
\numberwithin{equation}{section}
\begin{document}

\begin{abstract}
We define Hardy spaces $H^p(D'_\beta)$, $p\in(1,\infty)$, on the non-smooth worm domain $D'_\beta=\{(z_1,z_2)\in\dsC^2:|\im z_1-\log |z_2|^2|<\pih, |\log |z_2|^2|<\beta-\pih\}$ and we prove a series of related results such as the existence of boundary values on the distinguished boundary $\p D'_\beta$ of the domain and a Fatou-type theorem (i.e., pointwise convergence to the boundary values). Thus, we study the Szeg\H{o} projection operator $\widetilde{S}$ and the associated Szeg\H{o} kernel $K_{D'_\beta}$. More precisely, if $H^p(\partial D'_\beta)$ denotes the space of functions which are boundary values for functions in $H^p(D'_\beta)$,  we prove  that the operator $\widetilde{S}$ extends to a bounded linear operator 
$$
\widetilde{S}: L^p(\p D'_\beta)\to H^p(\p D'_\beta)
$$
for every $p\in(1,+\infty)$ and
$$
\widetilde{S}: W^{k,p}(\p D'_\beta)\to W^{k,p}(\p D'_\beta)
$$ 
for every $k>0$. Here $W^{k,p}$ denotes the Sobolev space of order $k$ and underlying $L^p$ norm, $p\in(1,\infty)$. As a consequence of the $L^p$ boundedness of $\widetilde{S}$, we prove that $H^p(D'_\beta)\cap\clC(\overline{D'_\beta})$ is a dense subspace of $H^p(D'_\beta)$.

\end{abstract}

\address{Dipartimento di Matematica, Universit\`{a} degli Studi di Milano, Via C. Saldini 50, 20133 Milano, Italy}
\email{alessandro.monguzzi@unimi.it}

\subjclass[2000]{32A25, 32A35, 32A40}
\thanks{The author partially supported by the grant PRIN 2010-11 {\em Real and Complex Manifolds: Geometry, Topology and Harmonic Analysis} of the Italian Ministry of Education (MIUR)}

%
%
\maketitle
\section{Introduction and Main Results}

Given a domain $\Omega\subseteq\dsC^n$, it is a classical problem to study the Hardy spaces of holomorphic functions and the Szeg\H{o} projection operator associated to this domain. If $\rho$ is a defining function for $\Omega$, i.e., $\Omega=\{z\in\dsC^n:\rho(z)<0\}$ and $|\nabla\rho|\neq 0$ on the boundary of $\Omega$, a standard way to define the Hardy spaces $H^{p}(\Omega)$, $p\in(1,\infty)$, is to consider a family of approximating subdomains $\{\Omega_\varepsilon\}_{\varepsilon>0}$ where $\Omega_\varepsilon=\{z\in\dsC^n: \rho(z)<-\varepsilon\}$.  Then
$$
H^p(\Omega):= \left\{F\ \text{holomorphic in}\ \Omega: \|F\|^p_{H^p(\Omega)}=\sup_{\varepsilon>0}\int_{b\Omega_\varepsilon}|F(\zeta)|^p\ d\sigma_\varepsilon<\infty\right\},
$$
where $b\Omega_\varepsilon$ is the topological boundary of $\Omega_\varepsilon$ and $d\sigma_\varepsilon$ is the euclidean measure induced on $b\Omega_\varepsilon$ .

Every function $F$ in $H^p(\Omega)$ admits a boundary value function $\widetilde{F}$ and the linear space of these boundary value functions defines a closed subspace of $L^p(b\Omega)$ which we denote by $H^p(b\Omega)$. In the special case $p=2$, the orthogonal projection 
$$
S_{\Omega}:L^2(b\Omega)\to H^2(b\Omega)
$$
is called the Szeg\H{o} projection operator associated to $\Omega$ and it has an integral representation by means of an integral kernel known as Szeg\H{o} kernel. We refer to \cite{MR0473215} for more details. 

The geometry of the domain $\Omega$ affects the regularity of $S_\Omega$ and this problem has been extensively studied in the last 40 years. There is a number of results regarding the regularity of the Szeg\H{o} projection in Sobolev scale for many classes of domains: strictly pseudoconvex domains \cite{MR0450623}, smooth bounded complete Reinhardt domains in $\dsC^n$ \cite{MR773403, MR835396}, domains satisfying Catlin's property  $P$ \cite{MR871667}, complete Hartogs domains in $\dsC^2$ \cite{MR971689, MR999739}, domains of finite type in $\dsC^2$ \cite{MR979602}, domains that admit a defining function that is plurisubharmonic on the boundary \cite{MR1133741} and convex domains of finite type in $\dsC^n$ \cite{MR1452048}. We refer also to \cite{MR1094488} and \cite{MR1381988} for some results regarding the behavior of the Szeg\H{o} projection with respect to the real analyticity of functions.

We also have some results concerning the $L^p$ regularity of the Szeg\H{o} projection; in \cite{MR906810} the problem is studied for a particular family of weakly pseudoconvex domains, in \cite{MR1452048} the case of convex domains is treated , while in \cite{MR2030575} the authors deal with non-smooth, simply connected domains in the plane $\dsC$. More recently, Lanzani and Stein announced  in \cite{MR3084008} some  new results about the $L^p$ regularity of the Szeg\H{o} projection. They still deal with strictly pseudoconvex domains, but assume only $\clC^2$ boundary regularity. We also cite \cite{MR3145917} where a new definition of the Szeg\H{o} kernel is suggested.

The smooth worm domain $\clW=\clW_\beta$ does not
belong to any of the known situations. The domain $\clW$ was first introduced by Diederich and Forn\ae ss in \cite{MR0430315} as a counterexample to certain classical conjectures about the geometry of pseudoconvex domains. For $\beta>\pih$, the worm domain is defined by
\bequa\label{SmoothWorm}
\clW=\{(z_1,z_2)\in\dsC^2:|z_1-e^{i\log|z_2|^2}|^2<1-\eta(\log|z_2|^2), z_2\neq 0\},
\eequa
where $\eta$ is a smooth, even, convex, non-negative function on the real line, chosen so that $\eta^{-1}(0)=[-\beta+\pih,\beta-\pih]$ and so that $\clW$ is bounded, smooth and pseudoconvex. We refer to \cite{MR2393268} for a detailed history of the study of the worm domain $\clW$. Diederich and Forn\ae ss introduced this domain to provide an example of a smooth, bounded and pseudoconvex domain whose closure does not have a Stein neighborhood basis. Nearly 15 years after its introduction, the interest in the worm domain has been renewed since it turns out to be a counterexample to other important conjectures. Starting from ground-breaking works of Kiselman \cite{MR1128596} and Barrett \cite{MR1149863}, Christ \cite{MR1370592} finally proved that the Bergman projection   $P_{\clW}$ of the worm domain, i.e., the orthogonal projection of $L^2(\clW)$  onto the closed subspace of holomorphic functions, does not map $\clC^{\infty}(\overline{\clW})$ to $\clC^{\infty}(\overline{\clW})$. Therefore, the worm domain $\clW$ is a smooth bounded pseudoconvex domain which does not satisfy Bell's Condition $R$. This condition is closely related to the boundary regularity of biholomorphic mappings as has been shown in works of Bell \cite{MR625347} and Bell and Ligocka \cite{MR568937}. 
 Due to the results of Christ, the Bergman projection of the worm $\clW$ and other related domains has been extensively studied by many authors. We cite the recent papers \cite{MR2336324, MR2393268, MR2448387, MR2904008, 2014arXiv1408.0082B, 2014arXiv1410.8490K} and the references therein. We remark that the Szeg\H{o} projection can be considered a boundary analogue of the Bergman projection. Moreover, the regularity of the Szeg\H{o} projection, at least in a certain setting, has been proved in \cite{MR3272760} to be closely linked to the regularity of the complex Green operator in analogy with the Bergman projection and the $\overline{\partial}$-Neumann operator.

Due to the lack of general results concerning the regularity of the Szeg\H{o} projection of smooth bounded weakly pseudoconvex domains and the peculiar behavior of $P_\clW$, the study of the regularity of $S_\clW$ is an interesting starting point for research in this direction. The work presented here is a first step for this investigation. 

In \cite{MR1149863}, Barrett proves that the Bergman projection $P_{\clW}$ does not preserve Sobolev spaces of sufficiently high order with the aid two non-smooth model domains, namely,
$$
D'_\beta=\lgra (z_1,z_2)\in \dsC^2 : \left|\im z_1-\log|z_2|^2 \right|<\frac{\pi}{2}, \left|\log|z_2|^2\right|<\beta-\frac{\pi}{2}\rgra.
$$
and
$$
D_\beta=\left\{(\zeta_1,\zeta_2 )\in\dsC^2:\re(\zeta_1 e^{-i\log|\zeta_2|^2})>0, \big|\log|\zeta_2|^2\big|<\beta-\pih\right\}.
$$
We refer the reader to Figure \ref{WormStrip} for a representation of $D'_\beta$ in the $(\im z_1, \log|z_2|)$-plane.

Despite being biholomorphically equivalent, the domains $D'_\beta$ and $D_\beta$ have a fundamental difference. For each fixed $z_1\in\dsC$, the fiber in the second component of the domain $D'_\beta$, that is the set $\{z_2\in\dsC: (z_1,z_2)\in D'_\beta\}$, is connected. This is not the case for the domain $D_\beta$. 

The geometry of $D'_\beta$ allows to obtain precise information about its Bergman projection and this information can be transferred to the Bergman projection $P_{D_\beta}$ of $D_\beta$ by means of the transformation rule for the Bergman projection under biholomorphic mappings. Finally, Barrett uses an exhaustion argument to transfer the information from $P_{D_\beta}$ to $P_{\clW}$ and conclude the proof.

In analogy with the Bergman case, we want to obtain information on the Szeg\H{o} projection $S_{\clW}$ studying $S_{{D'_\beta}}$ and $S_{D_\beta}$, but new difficulties arise. Being $\clW$ a smooth bounded domain, there is no confusion about the definition of the projection $S_{\clW}$; a little more caution is required when considering the domain $D'_\beta$ and $D_\beta$. The Szeg\H{o} projection acts on functions defined on the boundary of the domain considered and in the case of $D'_\beta$ and $D_\beta$ we can choose to work with the topological or the distinguished boundary.  
Moreover, in general, we lack a transformation rule for the Szeg\H{o} projection under biholomorphic mappings, thus it is not immediate to transfer information from $S_{D'_\beta}$ to $S_{D_\beta}$. Lastly, Barrett's exhaustion argument does not apply to the Szeg\H{o} setting trivially.  
For these reasons, in this work we only focus on the domain $D'_\beta$. We postpone to a future paper the investigation of $D_\beta$ and $\clW$. 



Notice that the domain $D'_\beta$ is rotationally invariant in the $z_2$ variable and can be sliced in strips. More in detail, let us fix $z_2\in\dsC$ such that $|\log|z_2|^2|<\beta-\pih$; then, the set 
$$
D'_\beta (z_2)=\{z_1\in\dsC :(z_1,z_2)\in D'_\beta\}=\{z_1\in\dsC : |\im z_1-\log|z_2|^2|<\pih\}
$$ 
can be identified with a strip of width $\pi$.
All these characteristics will be reflected in our results. The rotationally invariance in the $z_2$-variable will allow us to use the theory of Fourier series, while the ``strip-like" geometry in the $z_1$-variable will make the results for the Hardy spaces on a strip available.

\begin{figure}[h]
\begin{center}
\includegraphics[width=12cm]{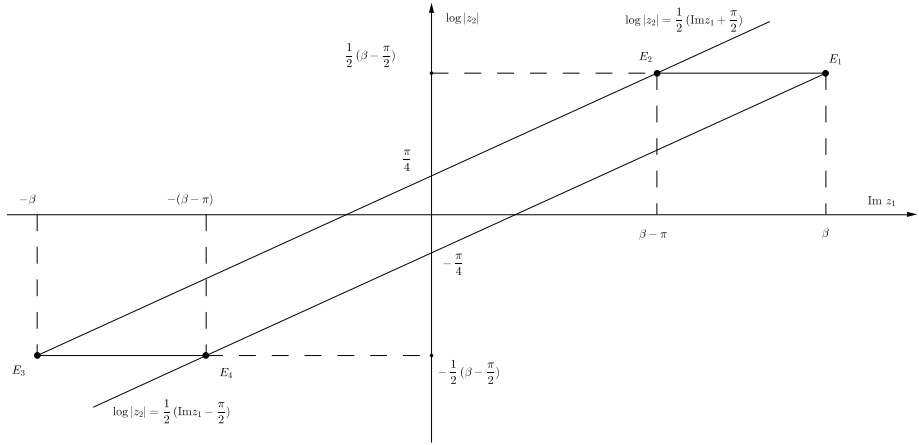}
\end{center}
\caption{A representation of the domain $D'_\beta$ in the $(\im z_1,\log|z_2|)$-plane.}\label{WormStrip}
\end{figure}

In order to define Hardy spaces on $D'_\beta$ we need to establish a $H^p$-type growth condition for holomorphic functions on $D'_\beta$. Instead of considering a growth condition on copies of the topological boundary $bD'_\beta$,  we decided to consider a growth condition on copies of the distinguished boundary $\p D'_{\beta}$. This seems to be a natural choice given the geometry of the domain. 

 In detail, the distinguished boundary $\partial D'_\beta$ is the set  
\begin{align}\label{BoundaryComponents}
\partial D'_\beta&=E_1\cup E_2\cup E_3\cup E_4,
\end{align}
where
\begin{align*}
&E_1=\lgra(z_1,z_2): \im z_1=\beta,\log|z_2|^2=\beta-\pih\rgra;\\
&E_2=\lgra(z_1,z_2): \im z_1=\beta-\pi,\log|z_2|^2=\beta-\pih\rgra\\
&E_3=\lgra(z_1,z_2)\: \im z_1=-\beta,\log|z_2|^2=-\lt\beta-\pih\rt\rgra;\\
&E_4=\lgra(z_1,z_2): \im z_1=-(\beta-\pi),\log|z_2|^2=-\lt\beta-\pih\rt\rgra.
\end{align*}

For every $p\in(1,\infty)$, we define the Hardy space $H^p(D'_\beta)$ as the function space
$$
H^p(D'_\beta)=\Big\{F\ \text{holomorphic in }\ D'_\beta: \|F\|^p_{H^p(D'_\beta)}=\sup_{(t,s)\in [0,\pih)\times[0,\beta-\pih)}\clL_pF(t,s)<\infty\Big\},
$$
where 
\begin{align}\label{WormGrowth}
\clL&_pF (t,s)=\\ \nonumber
&\int_{\dsR}\int_{0}^{1}\left|F\lt x+i(s+t),e^{\frac{s}{2}}e^{2\pi i\theta} \rt\right|^p d\theta dx +\int_{\dsR}\int_{0}^{1}\left|F\lt x-i(s+t),e^{-\frac{s}{2}}e^{2\pi i\theta} \rt\right|^p d\theta dx\\ \nonumber
&+\!\int_{\dsR}\int_{0}^{1}\!\left|F\lt x+i(s-t),e^{\frac{s}{2}}e^{2\pi i\theta} \rt\right|^p\! d\theta dx +\int_{\dsR}\int_{0}^{1}\!\left|F\lt x-i(s-t),e^{-\frac{s}{2}}e^{2\pi i\theta} \rt\right|^p\! d\theta dx. 
\end{align}

We emphasize that the domain $D'_\beta$ is not a product domain, while, on the other hand, every component $E_\ell$ of the distinguished boundary is and it can be identified with $\dsR\times\dsT$. 

The main results we obtain describe the good behavior in Sobolev and $L^p$ scale of the Szeg\H{o} projection $S_{D'_\beta}$ associated to the Hardy spaces just defined. A trivial remark is that, due to the definition of the spaces $H^p(D'_\beta)$, the associated Szeg\H{o} projection $S_{D'_\beta}$ acts on functions defined on the distinguished boundary of $D'_\beta$.

\begin{nota}
Before stating our results, we describe here some of the notation used in the paper. As we already mentioned, the distinguished boundary $\p D'_\beta$ has $4$ different components, thus when considering a function $\varphi:\p D'_\beta\to\dsC$ we actually mean a vector $\varphi=(\varphi_1, \varphi_2, \varphi_3, \varphi_4)$ where each function $\varphi_\ell$ is considered as defined on the component $E_\ell, \ell=1,\ldots,4$ of the distinguished boundary. Recall again that each $E_\ell$ can be identified with $\dsR\times\dsT$.

Given $p\in(1,\infty)$, the space $L^p(\p D'_\beta)$ is the function space
$$
L^p(\p D'_\beta)=\{\varphi=(\varphi_1,\varphi_2,\varphi_3,\varphi_4): |\!|\varphi|\!|^p_{L^p}=\sum_{\ell=1}^4|\!|\varphi_\ell|\!|^p_{L^p(\dsR\times\dsT)}<\infty\}.
$$
We use the notation $H^p(\p D'_\beta)$ to denote the closed subspace of $L^p(\p D'\beta)$ consisting of functions that are boundary values for functions in $H^p(D'_\beta)$. If $F\in H^p(D'_\beta)$, we use the notation $\widetilde F$ to denote the boundary value function of $F$. To be consistent with this convention, from now on the Szeg\H{o} projection associated to $D'_\beta$ will be denoted by $\widetilde{S}$, i.e., the operator $\widetilde S$ is the Hilbert space orthogonal projection 
$$
\widetilde{S}: L^2(\p D'_\beta)\to H^2(\p D'_\beta).
$$

If $\psi$ is a function in $C^{\infty}_0(\dsR\times\dsT)$, we denote with $\clF_\dsR \psi(\xi,\hat{j}\,)$ the Fourier transform of $\psi$ in the first variable and the $j$th Fourier coefficient in the second, i.e.,
$$
\clF_\dsR \psi(\xi,\hat{j}\,)=\frac{1}{2\pi}\int_\dsR\int_0^1 \psi(x,\gamma)e^{-ix\xi}e^{-2\pi ij\gamma}\ d\gamma dx.
$$
If $\mu$ is a function in $\clC^\infty_0 (\dsR)$, we denote either by $\widehat \mu$ or $\clF [\mu]$ its Fourier transform. The inverse Fourier transform will be denoted either by $\check \mu$ or $\clF^{-1}[\mu]$.

Given $p\in(1,\infty)$ and a real number $k>0$, the Sobolev space $W^{k,p}(\p D'_\beta)$ is the function space
\begin{equation}\label{SobolevSpace}
W^{k,p}(\p D'_\beta)=\{\varphi=(\varphi_1,\varphi_2,\varphi_3,\varphi_4) : |\!|\varphi|\!|^p_{W^{k,p}(\p D'_\beta)}=\sum_{\ell=1}^4 |\!|\varphi_\ell|\!|^p_{W^{k,p}(\dsR\times\dsT)}<\infty\},
\end{equation}
where
$$
|\!|\varphi_\ell|\!|^p_{W^{k,p}(\dsR\times\dsT)}=\int_{\dsR\times\dsT}\Big|\sum_{j\in\dsZ}e^{2\pi i j\gamma}\clF^{-1}_\dsR\big[[1+j^2+(\cdot)^2]^{\frac{k}{2}}\clF_\dsR\varphi_\ell(\cdot,\widehat{j})\big](x)\Big|^p\ dxd\gamma.
$$
\end{nota}
\noindent We adopt the non-standard notation $\ch(x)$ and $\sh(x)$ for the hyperbolic functions $\cosh(x)$ and $\sinh(x)$.

The main results we prove are the following.
\begin{thm}\label{t:Lp}
The Szeg\H{o} projection $\widetilde{S}$ extends to a linear bounded operator
\begin{align*}
\widetilde{S}:\ L^p(\partial D'_\beta)&\to H^p(\partial D'_\beta)\\
  \varphi&\mapsto \widetilde{S\varphi}
\end{align*}
for every $p\in(1,\infty)$.
\end{thm}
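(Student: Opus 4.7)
The plan is to exploit the two symmetries of $D'_\beta$ — translation in $\re z_1$ and rotation in the $z_2$ variable — to reduce $\widetilde S$ to a diagonal family of Fourier multiplier operators whose symbols are the quantities $\ch$ and $\sh$ appearing in the $L^2$ computation of the kernel. Since each component $E_\ell$ of $\p D'_\beta$ is identified with $\dsR\times\dsT$, one should decompose any $\varphi_\ell\in L^p(E_\ell)$ via the Fourier transform in the $x$-variable and the Fourier series in the angular variable $\gamma$, using the transform $\clF_\dsR\varphi_\ell(\xi,\widehat j\,)$ introduced in the Notation.

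First I would write $\widetilde S$ as a $4\times 4$ matrix of operators $(\widetilde S_{\ell m})_{\ell,m=1}^{4}$ with $\widetilde S_{\ell m}:L^p(E_m)\to L^p(E_\ell)$, using the explicit formula for the Szeg\H o kernel $K_{D'_\beta}$ that the paper has computed from the $L^2$ reproducing kernel on $H^2(D'_\beta)$. On the Fourier side each $\widetilde S_{\ell m}$ acts as multiplication by a symbol $m_{\ell m}(\xi,j)$ of the form
\[
 m_{\ell m}(\xi,j)=\frac{\,a_{\ell m}(\xi,j)\,}{\sh\!\bigl(\pi(\xi+j)\bigr)}\cdot e^{\pm(\beta-\pi/2)(\xi+j)},
\]
where $a_{\ell m}$ involves products of $\ch,\sh$ of $(\beta-\pi/2)(\xi+j)$ and $\pi(\xi+j)$. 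The cancellations forced by the fact that $\widetilde S$ is an orthogonal projection on $L^2$ already force each $m_{\ell m}$ to be uniformly bounded in $(\xi,j)$; this is the starting point.

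Second, to upgrade from $L^2$ to $L^p$, I would verify a Marcinkiewicz-type multiplier condition on $\dsR\times\dsZ$: namely that $\xi\mapsto m_{\ell m}(\xi,j)$ satisfies $|\partial_\xi m_{\ell m}(\xi,j)|\lesssim (1+|\xi+j|)^{-1}$ with a constant independent of $j$, and similarly the first difference $m_{\ell m}(\xi,j+1)-m_{\ell m}(\xi,j)$ behaves like $(1+|\xi+j|)^{-1}$. Because the symbols are rational expressions in $\sh$ and $\ch$ of a single linear combination $\xi+j$, these checks reduce to elementary estimates for hyperbolic functions, provided the exponential factors $e^{\pm(\beta-\pi/2)(\xi+j)}$ cancel correctly against $1/\sh(\pi(\xi+j))$ and its companion numerators (which they must, by the $L^2$ bound). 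A variable-$j$ Marcinkiewicz multiplier theorem on $\dsR\times\dsT$ (equivalently, combining the classical Mikhlin multiplier theorem in the continuous variable with the Marcinkiewicz multiplier theorem on $\dsT$ via vector-valued Littlewood–Paley theory) then yields $L^p$ boundedness of each $\widetilde S_{\ell m}$ for $1<p<\infty$, and hence of $\widetilde S$ on $L^p(\p D'_\beta)$.

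The main obstacle will be the bookkeeping of the exponential factors $e^{\pm(\beta-\pi/2)(\xi+j)}$ across the four components: on each $E_\ell$ the weight $|z_2|^{2j}=e^{\pm(\beta-\pi/2)j}$ produces a large factor that is only tamed after combining the contributions coming from all four components of the distinguished boundary. One must therefore expand the inversion of the $L^2$ Gram-like matrix (indexed by $j$) that encodes how the four boundary values of a single $H^2$ function interact, and show that the \emph{matrix} of multipliers, not its individual entries, enjoys the required Marcinkiewicz estimates uniformly in $j$. Once this is done, the extension to $L^p$ is automatic; that $\widetilde S\varphi$ lies in $H^p(\p D'_\beta)$ (not merely $L^p$) follows from the density of $\clC^\infty_0$-type test functions in $L^p$ together with the holomorphy of the kernel in $\zeta\in D'_\beta$.
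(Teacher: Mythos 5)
Your overall strategy---use the two symmetries to diagonalize $\widetilde S$ into Fourier multipliers on $\dsR\times\dsT$ and upgrade the $L^2$ bound to $L^p$---is the same one the paper follows, but the execution has a genuine gap at the central step. First, the symbols are misidentified: on the Fourier side the entries of $\widetilde S$ are (up to bounded exponential factors) of the form
$$
m(\xi,j)=\frac{e^{a\xi}\,e^{b(\xi-\frac{j}{2})}}{\ch[\pi\xi]\,\ch[(2\beta-\pi)(\xi-\frac{j}{2})]},
$$
i.e.\ a \emph{product} of a function of $\xi$ alone and a function of $\xi-\frac{j}{2}$ alone, with $\ch$ (never vanishing) in the denominator --- not a function of the single combination $\xi+j$ with a $\sh$ denominator as you write. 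Because the two arguments $\xi$ and $\xi-\frac{j}{2}$ are genuinely different, your proposed route through a product-type Marcinkiewicz condition on $\dsR\times\dsZ$ does not go through as stated: a factor such as $1/\ch[(2\beta-\pi)(\xi-\frac{j}{2})]$ has its transition region located at $\xi\approx\frac{j}{2}$, which moves with $j$, so the dyadic-rectangle estimates required by the product Marcinkiewicz theorem are not satisfied uniformly in $j$, and the pointwise bound $|\partial_\xi m|\lesssim(1+|\xi+j|)^{-1}$ you propose is neither the hypothesis of that theorem nor sufficient by itself. The missing idea is the one the paper uses in Proposition \ref{TransferredLpBounds}: factor the operator as a composition of a multiplier in $\xi$ alone (handled by Mihlin with constants independent of the parameters) and a multiplier in $\xi-\frac{j}{2}$, and remove the $j$-shift in the latter by conjugating with the modulation $e^{-i\frac{j}{2}x}$, which after the change of variables $\gamma\mapsto\gamma+\frac{x}{4\pi}$ and periodicity is an isometry of $L^p(\dsR\times\dsT)$; this reduces everything to a single one-dimensional, $j$-independent Mihlin multiplier. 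Without this (or an equivalent transference argument) the ``elementary estimates for hyperbolic functions'' you invoke do not close the argument.

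Two smaller points. Your worry that the exponential factors are tamed only after combining the four boundary components is unfounded: each entry is individually bounded, since e.g.\ $e^{-(2\beta-\pi)(\xi-\frac{j}{2})}/\ch[(2\beta-\pi)(\xi-\frac{j}{2})]\le 2$, so no Gram-matrix cancellation is needed. Finally, to conclude that $\widetilde{S\varphi}$ lies in $H^p(\p D'_\beta)$ (and not merely in $L^p$) you must produce the interior holomorphic function $S\varphi$ with the uniform growth bound $\sup_{(t,s)}\clL_p S\varphi(t,s)\lesssim\|\varphi\|^p_{L^p}$ and prove $L^p$ convergence of its restrictions to the boundary data; ``density plus holomorphy of the kernel'' does not substitute for these uniform-in-$(y,s)$ estimates, which are exactly what the factorization above delivers.
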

\begin{thm}\label{t:Sobolev}
The Szeg\H{o} projection $\widetilde{S}$ extends to a linear bounded operator
\begin{align*}
\widetilde{S}:\  W^{k,p}(\partial D'_\beta)&\to W^{k,p}(\partial D'_\beta)\\
    \varphi&\mapsto \widetilde{S\varphi}
\end{align*}
for every $p\in(1,\infty)$ and real number $k>0$.
\end{thm}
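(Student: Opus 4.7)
The plan is to deduce Theorem \ref{t:Sobolev} from Theorem \ref{t:Lp} by a commutation argument. Denote by $\Lambda^k$ the ``isotropic'' Fourier multiplier of order $k$ acting componentwise on $\p D'_\beta$ by
$$
(\Lambda^k\varphi)_\ell(x,\gamma)=\sum_{j\in\dsZ}e^{2\pi i j\gamma}\,\clF^{-1}_\dsR\Big[\,(1+j^2+(\cdot)^2)^{k/2}\,\clF_\dsR\varphi_\ell(\cdot,\widehat{j}\,)\,\Big](x),
$$
so that, by the very definition \eqref{SobolevSpace}, $\varphi\in W^{k,p}(\p D'_\beta)$ if and only if $\Lambda^k\varphi\in L^p(\p D'_\beta)$, and $\|\varphi\|_{W^{k,p}}=\|\Lambda^k\varphi\|_{L^p}$. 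The theorem would follow at once from the intertwining identity $\Lambda^k\,\widetilde S\,\varphi=\widetilde S\,\Lambda^k\,\varphi$ (on a dense subspace), since then, using Theorem \ref{t:Lp},
$$
\|\widetilde S\varphi\|_{W^{k,p}}=\|\Lambda^k\widetilde S\varphi\|_{L^p}=\|\widetilde S\Lambda^k\varphi\|_{L^p}\less \|\Lambda^k\varphi\|_{L^p}=\|\varphi\|_{W^{k,p}}.
$$

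The commutation is forced by the symmetries of $D'_\beta$. The continuous group $\dsR\times\dsT$ acts on $\dsC^2$ by $(t,\tau)\cdot(z_1,z_2)=(z_1+t,\,e^{2\pi i\tau}z_2)$, and this action preserves $D'_\beta$, its distinguished boundary $\p D'_\beta$, each of the components $E_1,\ldots,E_4$, and the product measure $dx\,d\gamma$ on each $E_\ell$. Consequently the induced unitary representation on $L^2(\p D'_\beta)$ preserves $H^2(\p D'_\beta)$, and the uniqueness of the Hilbert-space orthogonal projection onto $H^2$ forces $\widetilde S$ to commute with the whole action. Writing $\widetilde S$ as a $4\times 4$ matrix $(S_{\ell m})_{\ell,m=1}^4$ of bounded operators on $L^2(\dsR\times\dsT)$, it follows that each entry $S_{\ell m}$ commutes with every translation in $x$ and every rotation in $\gamma$, so that, via the Plancherel decomposition $L^2(\dsR\times\dsT)\simeq\bigoplus_{j\in\dsZ}L^2(\dsR)$, it is a scalar Fourier multiplier in the variables $(\xi,j)$. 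Scalar multipliers commute with one another, hence each $S_{\ell m}$ commutes with $\Lambda^k$, and therefore so does the matrix $\widetilde S$.

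To close the argument, one takes $\varphi\in C^\infty_0(\p D'_\beta)$: on the Fourier side the identity $\Lambda^k\widetilde S\varphi=\widetilde S\Lambda^k\varphi$ is transparent, and since $\Lambda^k\varphi\in L^p(\p D'_\beta)$ Theorem \ref{t:Lp} guarantees that both sides lie in $L^p$. The inequality displayed above then extends from $C^\infty_0$ to all of $W^{k,p}$ by density and continuity. The only nontrivial point in this plan is the assertion that $\widetilde S$ decomposes as a matrix of \emph{scalar} Fourier multipliers in $(\xi,j)$, rather than merely as a translation- and rotation-invariant operator; this however is automatic from the explicit description of $\widetilde S$ that should have been produced in order to establish Theorem \ref{t:Lp}, and in any case may be obtained abstractly from the structure theorem for bounded operators on $L^2(\dsR\times\dsT)$ commuting with the full group of translations in $x$ and rotations in $\gamma$.
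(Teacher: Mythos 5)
Your proof is correct, and its skeleton is the same as the paper's: both reduce Theorem \ref{t:Sobolev} to Theorem \ref{t:Lp} via the intertwining identity $\Lambda^k\widetilde{S}\varphi=\widetilde{S}\Lambda^k\varphi$, which is legitimate because, by the definition \eqref{SobolevSpace}, $\|\varphi\|_{W^{k,p}}=\|\Lambda^k\varphi\|_{L^p}$. The only difference is how the commutation is justified. The paper simply reads it off the explicit formula for $\widetilde{S\varphi}$ (e.g.\ \eqref{SimplySF1}): applying the multiplier $[1+j^2+(\cdot)^2]^{k/2}$ to $\widetilde{S\varphi}_1$ visibly produces $\widetilde{S\psi^k}_1$ with $\clF_\dsR\psi^k_1(\cdot,\widehat{j}\,)=[1+j^2+(\cdot)^2]^{k/2}\clF_\dsR\varphi_1(\cdot,\widehat{j}\,)$, since each matrix entry of $\widetilde{S}$ is manifestly a scalar Fourier multiplier in $(\xi,j)$. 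You instead derive the multiplier structure abstractly, from the fact that the $\dsR\times\dsT$ action $(t,\tau)\cdot(z_1,z_2)=(z_1+t,e^{2\pi i\tau}z_2)$ preserves $D'_\beta$, each $E_\ell$, and $H^2(\p D'_\beta)$, so that the orthogonal projection commutes with the induced unitaries and its entries lie in the commutant of the regular representation. Both justifications are sound; the symmetry argument is more conceptual and would survive if the explicit kernel were unavailable, while the paper's computation is shorter given that the formula \eqref{SimplySF1} has already been established en route to Theorem \ref{t:Lp}. Your closing density step (work first with finite Fourier sums of $\clC^\infty_0$ data, then extend by continuity) matches the paper's standard reduction and presents no difficulty.
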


Besides these theorems, we carefully study the spaces $H^p(D'_\beta)$ proving a series of results such as a Fatou type theorem (i.e., pointwise convergence to the boundary values), a Paley--Wiener type theorem for the space $H^2(D'_\beta)$ and a nice decomposition for the spaces $H^p(D'_\beta)$.

The paper is organized in the following way. In section \ref{strip} we recall some results concerning the Hardy spaces on a symmetric strip. The boundedness results of the singular integrals
which arise in this context are consequence of the standard theory of
Calder\'{o}n-Zygmund convolution operators, but, to the best of the
author's knowledge, they do not appear explicitly in the
literature. Therefore, we  give some hints for the proofs since we perform some
computations which will be used in the sections that follow. In section \ref{L2} we study in detail the Hilbert space $H^2(D'_\beta)$. In Section \ref{Lp} we study the spaces $H^p(D'_\beta)$, $p\in(1,\infty)$, and we prove Theorem \ref{t:Lp} and Theorem \ref{t:Sobolev}. 

Unless specified, we will use standard and self-explanatory notation. If necessary, we will point out at the beginning of each section the notation conventions. 

We will denote by $C$, possibly with subscripts, a constant that may change from place to place.

\section{Hardy spaces for a symmetric strip}\label{strip}

In the introduction we mentioned that the non-smooth worm domain $D'_\beta$ can be sliced in strips.  This feature of $D'_\beta$ will be fundamental in the development of the Hardy spaces $H^p(D'_\beta)$ since it will allow us to use the theory of Hardy spaces on a strip. Hence, we recall here some results concerning the $H^p(S_\beta)$ spaces where $S_\beta$ is the symmetric strip
$$
S_\beta=\{x+iy\in\dsC : |y|<\beta\}.
$$
The results contained in this section are well-known. The boundedness results of the singular integrals
which arise in this context are consequence of the standard theory of Calder\'{o}n-Zygmund convolution operators. Some of these results are contained in \cite{MR2323495} and \cite{MR0369703},  nevertheless, for the reader's convenience, we include here some details. For full details, we refer also to \cite{MonThesis}.

For every  $p\in(1,\infty)$, the Hardy space $H^p(S_\beta)$ is the function space
$$
H^p(S_\beta)=\left\{f\ \text{holomorphic in}\ S_\beta:\|f\|_{H^p(S_\beta)}<\infty\right\},
$$
where
\begin{align}\label{NormStrip}
\|f\|^p_{H^p(S_\beta)}=\sup_{0\leq y<\beta}\left[\int_{\dsR}|f(x+iy)|^p\ dy+\int_{\dsR}|f(x-iy)|^p\ dy\right].
\end{align}

By Mean Value Theorem, it is immediate to prove that 
\begin{equation}\label{UniformCompact}
\sup_{z\in K}|f(z)|\leq C_{K}\|f\|_{H^p(S_\beta)}
\end{equation}
where $K$ is a compact subset of $S_\beta$. 

Now, we recall the well-known Paley--Wiener Theorem for a strip, which relates the growth of a holomorphic function in a strip with the growth of the Fourier transform of its restriction to the real line. We refer to \cite{MR1451142} for the proof.
\begin{thm}\textbf{(Paley--Wiener Theorem for a strip)}\label{PWstrip} Let $f_0$ in $L^2(\dsR)$. Then the following are equivalent:
\begin{itemize}
\item[$(i)$] $f_0$ is the restriction to the real line of a function $F$ holomorphic in the strip $S_\beta$ such that
$$
\sup_{|y|<\beta}\int_{\dsR}|F(x+iy)|^2 \ dy<\infty;
$$
\item[$(ii)$]$e^{\beta|\xi|}\widehat{f_0}\in L^2(\dsR)$. 
\end{itemize}
Moreover, the following relationship holds
\begin{align}\label{PWstrip2}
F(z)&=\frac{1}{2\pi}\int_{\dsR}\widehat{f_0}(\xi)e^{ iz\xi}\ d\xi=\clF^{-1}[e^{-\im z(\cdot)}\widehat{f_0}](\re z).
\end{align}
\end{thm}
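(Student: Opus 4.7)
The plan is to prove the two implications separately and use Plancherel throughout to translate $L^2$ growth in a horizontal strip into weighted $L^2$ bounds on the Fourier side.

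For the direction (ii) $\Rightarrow$ (i), I would start from the hypothesis $e^{\beta|\xi|}\widehat{f_0}\in L^2(\dsR)$ and simply define
\[
F(z)=\frac{1}{2\pi}\int_\dsR \widehat{f_0}(\xi)\,e^{iz\xi}\,d\xi.
\]
The Cauchy--Schwarz inequality shows the integrand is in $L^1(\dsR)$ for every fixed $z=x+iy$ with $|y|<\beta$, since
\[
\int_\dsR |\widehat{f_0}(\xi)|\,e^{-y\xi}\,d\xi \le \|e^{\beta|\cdot|}\widehat{f_0}\|_{L^2}\cdot\left(\int_\dsR e^{-2y\xi-2\beta|\xi|}\,d\xi\right)^{1/2},
\]
and the last integral is finite because $|y|<\beta$. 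Holomorphy of $F$ in $S_\beta$ then follows by differentiating under the integral sign (or by Morera's theorem applied to small rectangles), with absolute convergence controlled by the same estimate on a slightly bigger compact subset of $S_\beta$. For the horizontal slices, observe that $F(\,\cdot\,+iy)$ is the inverse Fourier transform of $e^{-y\xi}\widehat{f_0}(\xi)$, so by Plancherel
\[
\int_\dsR |F(x+iy)|^2\,dx = \frac{1}{2\pi}\int_\dsR |\widehat{f_0}(\xi)|^2\,e^{-2y\xi}\,d\xi \le \frac{1}{2\pi}\int_\dsR |\widehat{f_0}(\xi)|^2\,e^{2\beta|\xi|}\,d\xi,
\]
which is uniform in $y$. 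Setting $y=0$ recovers $f_0$ as the restriction of $F$ to $\dsR$ via Fourier inversion.

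For the direction (i) $\Rightarrow$ (ii), let $F_y(x)=F(x+iy)$ for $|y|<\beta$; the growth condition gives $F_y\in L^2(\dsR)$ with norm bounded uniformly in $y$. The central claim is the translation identity
\[
\widehat{F_y}(\xi)=e^{-y\xi}\,\widehat{f_0}(\xi),
\]
which, once established, yields
\[
\int_\dsR|\widehat{f_0}(\xi)|^2 e^{-2y\xi}\,d\xi = 2\pi\int_\dsR|F_y(x)|^2\,dx \le C,
\]
uniformly in $|y|<\beta$. Letting $y\nearrow\beta$ and $y\searrow-\beta$ and invoking the monotone convergence theorem on the positive and negative half-lines separately gives $\int|\widehat{f_0}(\xi)|^2 e^{2\beta|\xi|}\,d\xi<\infty$, which is (ii). The formula \eqref{PWstrip2} is then an immediate consequence.

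The main obstacle is justifying the translation identity, i.e., performing the contour shift required to move the defining integral for $\widehat{f_0}$ from the real axis to the line $\im z=y$. The obvious approach is to apply Cauchy's theorem to $F(z)e^{-iz\xi}$ on tall thin rectangles $[-R,R]\times[0,y]$, but this requires showing that the contribution of the two vertical sides vanishes along a suitable sequence $R_n\to\infty$. This decay at infinity is not automatic from the $H^2$ hypothesis alone, so I would establish it via the standard mean-value estimate for holomorphic $L^2$ functions: on any strip $|\im z|\le y'<\beta$, the Cauchy integral formula on small discs yields $|F(x+iy)|\le C_{y'}\,\bigl(\int_{|x-x'|<\delta}\int_{|y-y''|<\delta}|F|^2\bigr)^{1/2}$, and the right-hand side tends to $0$ as $|x|\to\infty$ by the dominated convergence theorem applied to the full $H^2$ norm. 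After this decay is in place, Fubini and Cauchy's theorem give the identity first for Schwartz test functions $\xi\mapsto\varphi(\xi)$ paired against both sides, and a density argument transfers it to the $L^2$ level.
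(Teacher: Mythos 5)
Your proof is correct. Note that the paper does not actually prove this theorem --- it is quoted with a reference to the literature --- so there is no internal argument to compare against; your write-up is the standard proof and all the essential points are in place. In the direction $(ii)\Rightarrow(i)$ the Cauchy--Schwarz bound, holomorphy via Morera, and the Plancherel computation on horizontal slices are exactly right (and since $\widehat{f_0}\in L^1$ by your own estimate at $y=0$, Fourier inversion does recover $f_0$ pointwise a.e.). In the direction $(i)\Rightarrow(ii)$ the translation identity $\widehat{F_y}=e^{-y\xi}\widehat{f_0}$ is indeed the crux, and your mean-value argument closes it: the bound $|F(x_0+iy_0)|^2\le C\delta^{-2}\int_{-\beta}^{\beta}\int_{|x-x_0|<\delta}|F|^2\,dx\,dy$ together with dominated convergence gives decay as $|x_0|\to\infty$ that is \emph{uniform} for $y_0$ in a compact subinterval of $(-\beta,\beta)$, so the vertical sides vanish for all $R\to\infty$ and no subsequence $R_n$ is needed. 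Pairing against $\varphi\in\clC^\infty_0$ and using that $\widehat\varphi$ extends entirely with rapid decay on horizontal lines makes the Fubini--Cauchy step rigorous, and the monotone convergence argument as $y\to\pm\beta$ on the two half-lines correctly delivers $e^{\beta|\xi|}\widehat{f_0}\in L^2$. Two cosmetic remarks: the $dy$ in the displayed supremum of the statement is evidently a typo for $dx$, which you have silently and correctly repaired; and your constants implicitly assume the normalization $\|f\|_2^2=\frac{1}{2\pi}\|\widehat f\|_2^2$, which is consistent with the inversion formula \eqref{PWstrip2} used in the paper.
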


Since $\p S_\beta$ has two boundary components and each of these components can be identified with the real line, the notation $L^p(\p S_\beta)$ denotes the space of functions $\varphi=(\varphi_+, \varphi_-)$ such that
$$
|\!|\varphi|\!|^p_{L^p(\p S_\beta)}=\int_{\dsR}|\varphi_+(x)|^p\ dx+\int_{\dsR}|\varphi_-(x)|^p\ dx<\infty. 
$$
We use the notation $\varphi_{\pm}$ since we think of $\varphi_+$ as a function defined on the upper boundary of the strip $S_\beta$ and of $\varphi_-$ as a function defined on the lower boundary.

Theorem \ref{PWstrip} guarantees that the function $\widetilde{f}_{\kappa}(x+\kappa i\beta):=\mathcal{F}^{-1}[e^{-\kappa\beta (\cdot)}\widehat{f}_0](x)$ is well-defined for $\kappa\in\{+,-\}$, therefore we can endow $H^2(S_\beta)$ with the inner product
 $$
 \left<f,g\right>_{H^2(S_\beta)}:=\big<\widetilde{f},\widetilde{g}\big>_{L^2(\partial S_\beta)},
 $$
The space $H^2(S_\beta)$ is a reproducing kernel Hilbert space with respect to this inner product. Hence, from \eqref{UniformCompact} and the Paley--Wiener Theorem, we obtain the following result.
\begin{thm}\label{StripRKHS}
The reproducing kernel of the Hardy space $H^2(S_\beta)$ is the function 
$$
K_{S_\beta}(w,z)=\frac{1}{4\pi}\int_{\dsR}\frac{e^{i(w-\overline{z})\xi}}{\ch[2\beta\xi]}\ d\xi=\frac{1}{2\beta\ch[\frac{\pi}{4\beta}(w-\overline{z})]}.
$$

Moreover, for all $f\in H^2(S_\beta)$,
$$
\lim_{y\to\pm\beta^{}}f(\cdot+iy)=\widetilde{f}_{\pm}
$$
where the limit holds in $L^2(\dsR)$ and for almost every $x$ in $\dsR$.

\end{thm}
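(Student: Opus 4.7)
The plan is to reduce both statements to Theorem \ref{PWstrip}. By Paley--Wiener each $f \in H^2(S_\beta)$ is encoded by a density $\widehat{f_0}$ satisfying $\int_\dsR \ch(2\beta\xi)|\widehat{f_0}(\xi)|^2\,d\xi < \infty$, and its two boundary traces have Fourier transforms $\widehat{\widetilde f_{\pm}}(\xi) = e^{\mp \beta\xi}\widehat{f_0}(\xi)$. Plancherel then recasts the Hardy inner product as
\begin{equation*}
\langle f, g\rangle_{H^2(S_\beta)} = \tfrac{1}{\pi}\int_\dsR \ch(2\beta\xi)\,\widehat{f_0}(\xi)\overline{\widehat{g_0}(\xi)}\,d\xi.
\end{equation*}
To identify the reproducing kernel I would fix $w \in S_\beta$ and impose $\langle f, K_{S_\beta}(\cdot, w)\rangle = f(w) = (2\pi)^{-1}\int_\dsR \widehat{f_0}(\xi)e^{iw\xi}\,d\xi$ for every $f$; matching the $\widehat{f_0}$-integrands forces the real-line restriction of $K_{S_\beta}(\cdot,w)$ to have Fourier transform $e^{-i\bar w \xi}/(2\ch(2\beta\xi))$. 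Inverting the Fourier transform, and analytically continuing in the first variable (legitimate because the integrand decays like $e^{-2\beta|\xi|}$), yields the first integral formula, and the closed form then follows from the classical contour-integration identity $\int_\dsR e^{iu\xi}/\ch(a\xi)\,d\xi = (\pi/a)/\ch(\pi u/(2a))$.

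For the boundary value statement, $L^2$-convergence is a direct consequence of Plancherel applied to $f(x+iy) = \clF^{-1}[e^{-y\xi}\widehat{f_0}](x)$: the bound $|e^{-y\xi} - e^{\mp\beta\xi}|^2 \leq 4\,\ch(2\beta\xi)$ for $|y| \leq \beta$, together with $\ch(2\beta\xi)|\widehat{f_0}|^2 \in L^1$, lets dominated convergence do the rest. For pointwise a.e.\ convergence I would decompose $f = f^+ + f^-$ according to the sign of the Fourier frequency, setting
\begin{equation*}
f^{\pm}(z) := \frac{1}{2\pi}\int_{\pm\xi > 0}\widehat{f_0}(\xi)\,e^{iz\xi}\,d\xi.
\end{equation*}
The Paley--Wiener weight forces $f^+$ to extend holomorphically to the half-plane $\{\im z > -\beta\}$ with $\sup_{y > -\beta}\int_\dsR |f^+(x+iy)|^2\,dx < \infty$, so $f^+$ belongs to the Hardy space of that half-plane, and symmetrically $f^- \in H^2(\{\im z < \beta\})$. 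On the upper edge $\{\im z = \beta\}$, $f^+$ is holomorphic through the line (hence continuous in $y$), while the classical Fatou theorem for a half-plane provides pointwise a.e.\ boundary limits for $f^-$; comparing Fourier transforms shows that the sum of these two limits is precisely $\widetilde f_+$. The case $y \to -\beta^{+}$ is the mirror argument.

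The routine pieces (the Plancherel/kernel computation and the $L^2$ limit) are essentially mechanical; the only delicate point is verifying that the two frequency components $f^\pm$ genuinely lie in the stated half-plane Hardy spaces and that their individual boundary traces reassemble correctly to $\widetilde f_{\pm}$. That is the step I expect to demand the most care, and once it is in place the appeal to the classical half-plane Fatou theorem is standard.
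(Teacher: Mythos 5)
Your argument is correct, and the kernel identification is exactly the computation the paper intends: Paley--Wiener plus Plancherel turns the boundary inner product into $\frac{1}{\pi}\int\ch(2\beta\xi)\widehat{f_0}\overline{\widehat{g_0}}\,d\xi$, and matching integrands against $f(w)=\frac{1}{2\pi}\int\widehat{f_0}(\xi)e^{iw\xi}d\xi$ forces the multiplier $1/(2\ch(2\beta\xi))$. (As a side remark, the contour identity you quote yields the closed form $\frac{1}{8\beta}\,\mathrm{sech}[\frac{\pi}{4\beta}(w-\overline z)]$, so the constant $\frac{1}{2\beta}$ displayed in the theorem appears to be off by a factor of $4$; this is an issue with the stated formula, not with your derivation.) Where you genuinely diverge from the paper is in the almost-everywhere convergence. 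The paper's route --- carried out explicitly only in the proof of Theorem \ref{SzegoStriscia} and invoked by reference here --- is to write $f(\cdot+iy)$ as convolution against the explicit kernels $1/\ch$ and $1/\sh$, split off a summability kernel, and control the remainder by Calder\'on--Zygmund theory for the truncated singular integral with kernel $1/\sh[\frac{\pi y}{4\beta}]$. You instead split $f=f^++f^-$ by the sign of the frequency, observe that the Paley--Wiener weight places $f^+$ in $H^2$ of the larger half-plane $\{\im z>-\beta\}$ (so it is holomorphic across the upper edge and its boundary behaviour there is trivial) while $f^-$ lies in $H^2(\{\im z<\beta\})$, and then invoke the classical half-plane Fatou theorem for $f^-$; reassembling the Fourier transforms of the two traces gives $\widetilde f_+$. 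Both arguments are sound. Yours is shorter and avoids any kernel estimates, but it is wedded to the $L^2$/Paley--Wiener structure; the paper's convolution-kernel approach is heavier here but is the one that survives the passage to $L^p$, $p\neq 2$, which the paper needs in Theorem \ref{SzegoStriscia} and throughout Section \ref{Lp}. The one step you flag as delicate --- that $f^\pm$ lie in the stated half-plane Hardy classes --- does go through: Cauchy--Schwarz against $e^{\mp\beta\xi}$ gives locally uniform absolute convergence of the defining integrals, and the $\sup_y$ bounds follow from $e^{2\beta|\xi|}|\widehat{f_0}|^2\in L^1$.
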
 
The integration against the kernel $K_{S_\beta}$ induces an operator which can be continuously extended to $L^p(\partial S_\beta)$ for every $p\in(1,\infty)$.
\begin{thm}\label{Lpstriscia}
Let $\varphi=(\varphi_+,\varphi_-)$ be a function in $L^2(\p S_\beta)\cap L^p(\p S_\beta)$, $p\in(1,\infty)$ and consider the operator $\varphi\mapsto S\varphi$ where
$$
S\varphi(z):= \int_{\dsR}\varphi_+(x)K_{S_\beta}(z,x+i\beta)\ dx+\int_\dsR \varphi_-(x)K_{S_\beta}(z,x-i\beta)\ dx.
$$

Then, the operator $\varphi\mapsto S\varphi$ extends to a bounded linear operator $S:L^p(\partial S_\beta)\to H^p(S_\beta)$.
\end{thm}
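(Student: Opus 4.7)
The plan is to compute the boundary values of $S\varphi$ as Fourier multiplier operators, to verify their $L^p$-boundedness by classical Calder\'on--Zygmund machinery, and then to upgrade the resulting boundary estimate to the full $H^p(S_\beta)$-bound by subharmonicity.

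\textbf{Step 1 (Fourier computation).} For $\varphi=(\varphi_+,\varphi_-)\in L^2(\partial S_\beta)\cap L^p(\partial S_\beta)$, the function $S\varphi$ is holomorphic on $S_\beta$ (the kernel is holomorphic in its first argument) and belongs to $H^2(S_\beta)$ by Theorem \ref{StripRKHS}. Substituting the Fourier representation of $K_{S_\beta}$ into the definition of $S\varphi$ and applying Fubini gives
$$
S\varphi(w)=\frac{1}{4\pi}\int_{\dsR}e^{iw\xi}\Big(\frac{e^{-\beta\xi}}{\ch(2\beta\xi)}\wh{\varphi_+}(\xi)+\frac{e^{\beta\xi}}{\ch(2\beta\xi)}\wh{\varphi_-}(\xi)\Big)d\xi,
$$
so letting $w\to x\pm i\beta$ in the $L^2$ Paley--Wiener sense identifies the boundary values $\wt{S\varphi}_\pm$ as the action on $(\varphi_+,\varphi_-)$ of the four Fourier multipliers
$$
m_0(\xi)=\frac{1}{\ch(2\beta\xi)},\qquad m_\pm(\xi)=\frac{e^{\mp 2\beta\xi}}{\ch(2\beta\xi)}=1\mp\tanh(2\beta\xi).
$$

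\textbf{Step 2 ($L^p$ estimates for the multipliers).} The function $m_0$ is Schwartz on $\dsR$, so the associated convolution kernel is Schwartz and the operator is bounded on every $L^p(\dsR)$, $1<p<\infty$. The function $m_\pm$ is smooth and bounded, and $\xi\,m_\pm'(\xi)=\mp2\beta\xi\,\mathrm{sech}^2(2\beta\xi)$ is bounded on $\dsR$, so $m_\pm$ satisfies the H\"ormander--Mikhlin condition and the corresponding operator is bounded on $L^p(\dsR)$ for every $p\in(1,\infty)$. Equivalently, $m_\pm$ differs from $1\mp\sgn(\xi)$ by a Schwartz function, so the operator decomposes as a Schwartz convolution plus a combination of the identity and the Hilbert transform. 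Summing the four contributions yields
$$
\|\wt{S\varphi}\|_{L^p(\partial S_\beta)}\le C_p\|\varphi\|_{L^p(\partial S_\beta)}.
$$

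\textbf{Step 3 (boundary-to-interior).} Since $S\varphi$ is holomorphic on $S_\beta$, the function $|S\varphi|^p$ is subharmonic, so the slice integral $y\mapsto\int_{\dsR}|S\varphi(x+iy)|^p\,dx$ is a convex function of $y\in(-\beta,\beta)$ (integrate $\Delta(|S\varphi|^p)\ge 0$ in $x$). Being convex with finite endpoint limits $\|\wt{S\varphi}_\pm\|_{L^p(\dsR)}^p$, it is bounded on the whole interval by their maximum. Summing the slice estimates at heights $\pm y$ as in \eqref{NormStrip} gives $\|S\varphi\|_{H^p(S_\beta)}\le C_p\|\varphi\|_{L^p(\partial S_\beta)}$ for $\varphi\in L^2\cap L^p(\partial S_\beta)$, and density of $L^2\cap L^p$ in $L^p(\partial S_\beta)$ extends $S$ continuously to all of $L^p(\partial S_\beta)$. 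The main delicacy is Step 2: the multipliers $m_\pm$ are not $L^1$-multipliers, so one must identify and treat the embedded Hilbert-transform content, after which the remaining pieces reduce to convolution with Schwartz kernels or a direct application of the H\"ormander--Mikhlin theorem.
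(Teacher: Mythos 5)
Your Steps 1--2 are exactly the paper's approach: pass to the Fourier side via Theorem \ref{StripRKHS} and Plancherel, and invoke Mihlin. But there is a genuine gap in Step 3, and it is an unnecessary one. You apply Mihlin only to the \emph{boundary} multipliers and then try to propagate the estimate into the interior by convexity of $y\mapsto\int_{\dsR}|S\varphi(x+iy)|^p\,dx$. The assertion that this convex function has ``finite endpoint limits $\|\wt{S\varphi}_\pm\|_{L^p(\dsR)}^p$'' is precisely what you have not proved: identifying $\lim_{y\to\pm\beta}\|S\varphi(\cdot+iy)\|_{L^p}$ with the $L^p$ norm of the boundary multiplier's output requires the $L^p$ convergence of the slices to the boundary values, which is the content of the \emph{subsequent} Theorem \ref{SzegoStriscia} (proved there by a summability-kernel plus Calder\'on--Zygmund argument) and is not available at this stage. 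A convex function on $(-\beta,\beta)$ a priori has limits in $[0,+\infty]$ at the endpoints, so without that input the convexity gives nothing. (The convexity claim itself also needs more care than ``integrate $\Delta(|S\varphi|^p)\ge 0$ in $x$'': $|S\varphi|^p$ is only distributionally subharmonic at zeros of $S\varphi$ when $p<2$, and one must dispose of the boundary terms at $x=\pm\infty$.)

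The fix is to do what the paper does: apply Mihlin directly to the \emph{interior} multipliers in your own Step 1 formula. For $w=x+iy$ with $|y|<\beta$ the relevant symbols are $\xi\mapsto e^{-(y\pm\beta)\xi}/\bigl(2\ch[2\beta\xi]\bigr)$, and since $0\le y+\beta\le 2\beta$ and $-2\beta\le y-\beta\le 0$ one checks $|m|\le C$ and $|\xi m'(\xi)|\le C$ with constants independent of $y$; this yields $\sup_{|y|<\beta}\|S\varphi(\cdot+iy)\|_{L^p(\dsR)}\le C_p\|\varphi\|_{L^p(\partial S_\beta)}$ in one stroke, which is the $H^p(S_\beta)$ bound, and Step 3 becomes superfluous. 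One further small slip in Step 2: $m_\pm-\bigl(1\mp\sgn(\xi)\bigr)=\mp\bigl(\tanh(2\beta\xi)-\sgn(\xi)\bigr)$ is \emph{not} Schwartz (it jumps at $\xi=0$); it is an exponentially decaying bounded function, and the decomposition still works since both $\sgn$ and $\tanh(2\beta\,\cdot)$ are $L^p$ multipliers, but your direct Mihlin verification is the correct justification and the ``Schwartz'' aside should be dropped.
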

\begin{proof}
For future reference, we observe that for a function $\varphi\in L^2(\p S_\beta)\cap L^p(\p S_\beta)$ it holds
\begin{equation}\label{SFstrip}
S\varphi(z)=\clF^{-1}\Big[\frac{e^{-(\im z+\beta)(\cdot)}\widehat{\varphi_+}}{2\ch[2\beta(\cdot)]}\Big](\re z)+\clF^{-1}\Big[\frac{e^{-(\im z-\beta)(\cdot)}\widehat{\varphi_-}}{2\ch[2\beta(\cdot)]}\Big](\re z).
\end{equation}
This formula is immediately deduced from Theorem \ref{StripRKHS} and Plancherel's theorem.
The $L^p$ boundedness of the operator $S$ easily follows from Mihlin's multipliers theorem  (see, e.g., \cite[Chapter 5]{MR2445437}). 
\end{proof}

We conclude the section studying the $L^p$ regularity of the Szeg\H{o} projection associated to the spaces $H^p(S_\beta)$, $p\in(1,\infty)$.

Given $\varphi=(\varphi_+, \varphi_-)$ in $L^p(\p S_\beta)$, define the function $\widetilde{S\varphi}=(\widetilde{S\varphi}_+,\widetilde{S\varphi}_-)$ by
\begin{align*}
  &\widetilde{S\varphi}_+(x+i\beta)=\clF^{-1}\Big[\frac{e^{-2\beta(\cdot)}\widehat{\varphi_+}}{2\ch[2\beta(\cdot)]}\Big](x)+\clF^{-1}\Big[\frac{\widehat{\varphi_-}}{2\ch[2\beta(\cdot)]}\Big](x)\\
  & \widetilde{S\varphi}_-(x+i\beta)=\clF^{-1}\Big[\frac{\widehat{\varphi_+}}{2\ch[2\beta(\cdot)]}\Big](x)+\clF^{-1}\Big[\frac{e^{2\beta(\cdot)}\widehat{\varphi_-}}{2\ch[2\beta(\cdot)]}\Big](x).
\end{align*}
Consider now the operator $\varphi\mapsto \widetilde{S\varphi}$ and define
$$
H^p(\p S_\beta)=\{\varphi=(\varphi_+,\varphi_-)\in L^p(\p S_\beta):\exists f\in H^p(S_\beta)\ \text{s.t.}\ \widetilde{f}_+=\varphi_+,\widetilde{f}_-=\varphi_-\}.
$$
Then, $H^p(\p S_\beta)$ is a closed subspace of $L^p(\p D'_\beta)$ and the following theorem holds. 
\begin{thm}\label{SzegoStriscia}
Let $\varphi$ be a function in $L^p(\p S_\beta)$, $p\in(1,\infty)$.  Then,
\begin{equation}\label{LimitsStrip}
\lim_{y\to\beta^{\mp}}S\varphi(\cdot+iy))=\widetilde{S\varphi}
\end{equation}
where the limits are in $L^p(\dsR)$ and pointwise almost everywhere in $\dsR$. Moreover, the operator $\varphi\mapsto \widetilde{S\varphi}$ extends to a bounded linear operator 
\begin{align*}
\widetilde{S}:L^p(\p S_\beta)&\to H^p(\p S_\beta)
\end{align*}
for every $p\in(1,\infty)$. 
%
\end{thm}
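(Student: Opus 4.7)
The plan is to read off the Fourier-multiplier structure of $\widetilde{S}$ from \eqref{SFstrip}, deduce $L^p$-boundedness by Mihlin's theorem, and then pass from the $L^2$-convergence in Theorem~\ref{StripRKHS} to $L^p$ and pointwise a.e.\ convergence via uniform operator bounds, density, and a maximal inequality. Introduce the symbols
$$
m_+(\xi):=\frac{e^{-2\beta\xi}}{2\ch[2\beta\xi]}=\frac{1}{1+e^{4\beta\xi}},\quad m_0(\xi):=\frac{1}{2\ch[2\beta\xi]},\quad m_-(\xi):=\frac{e^{2\beta\xi}}{2\ch[2\beta\xi]}=\frac{1}{1+e^{-4\beta\xi}}.
$$
Each is smooth on $\dsR$, bounded by $1$, and a direct computation gives $|\xi^k m_\kappa^{(k)}(\xi)|\leq C_k$ for every $k\geq 0$ (all derivatives decay like a polynomial times $e^{-c|\xi|}$ away from the origin). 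Since the two components of $\widetilde{S\varphi}$ are finite sums of Fourier multiplier operators with symbols in $\{m_+,m_0,m_-\}$ applied to $\varphi_+$ or $\varphi_-$, Mihlin's theorem (exactly as in the proof of Theorem~\ref{Lpstriscia}) yields the boundedness of $\widetilde{S}$ on $L^p(\p S_\beta)$ for every $p\in(1,\infty)$.

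To establish the boundary limits, I would first treat $\varphi\in L^2(\p S_\beta)\cap L^p(\p S_\beta)$. Taking the Fourier transform in \eqref{SFstrip} yields
$$
\widehat{S\varphi(\cdot+iy)}(\xi)=\frac{e^{-(y+\beta)\xi}}{2\ch[2\beta\xi]}\widehat{\varphi_+}(\xi)+\frac{e^{-(y-\beta)\xi}}{2\ch[2\beta\xi]}\widehat{\varphi_-}(\xi),
$$
and as $y\to\beta^-$ the right-hand side converges, pointwise in $\xi$ and in $L^2(\dsR)$ by dominated convergence (the multipliers are bounded by $1$), to the Fourier transform of $\widetilde{S\varphi}_+(\cdot+i\beta)$. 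Plancherel then gives $L^2$-convergence $S\varphi(\cdot+iy)\to\widetilde{S\varphi}_+$, and since $S\varphi\in H^2(S_\beta)$ Theorem~\ref{StripRKHS} simultaneously gives a.e.\ convergence. To extend $L^p$-convergence to every $\varphi\in L^p(\p S_\beta)$, I would observe that the multipliers $e^{-(y\pm\beta)\xi}/(2\ch[2\beta\xi])$ satisfy Mihlin's condition with bounds uniform in $y\in[0,\beta]$, so $\|S\varphi(\cdot+iy)\|_{L^p(\dsR)}\leq C_p\|\varphi\|_{L^p(\p S_\beta)}$ uniformly in $y$; combined with the $L^p$-boundedness of $\widetilde{S}$ established above, a standard $3\varepsilon$ argument based on the density of $L^2\cap L^p$ in $L^p$ closes the gap. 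The case $y\to-\beta^+$ is symmetric.

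Pointwise a.e.\ convergence for a general $\varphi\in L^p(\p S_\beta)$ requires a maximal inequality
$$
\Big\|\sup_{0\leq y<\beta}|S\varphi(\cdot+iy)|\Big\|_{L^p(\dsR)}\leq C_p\,\|\varphi\|_{L^p(\p S_\beta)},
$$
which I would obtain by representing $S\varphi(\cdot+iy)$ in each half of the strip as a convolution of $\widetilde{S\varphi}_\pm$ against a Poisson-type kernel and dominating the vertical maximal function by the Hardy--Littlewood maximal function; a vector-valued Mihlin theorem gives an alternative route. Together with the a.e.\ convergence already known on the dense subclass $L^2\cap L^p$, the standard $\limsup$ argument yields a.e.\ convergence for every $\varphi\in L^p(\p S_\beta)$. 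Finally, $\widetilde{S\varphi}\in H^p(\p S_\beta)$ is automatic: by Theorem~\ref{Lpstriscia}, $S\varphi\in H^p(S_\beta)$, and by the convergence just established its boundary traces are exactly $\widetilde{S\varphi}_\pm$. The main technical obstacle is the maximal inequality needed for pointwise a.e.\ convergence; the $L^p$-boundedness of $\widetilde{S}$ and the $L^p$-convergence of $S\varphi(\cdot+iy)$ are clean Mihlin-plus-density arguments.
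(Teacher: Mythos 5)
Your route is genuinely different from the paper's. The paper writes $S\varphi[\,\cdot+i(\beta-\varepsilon)]$ explicitly as convolution against $1/\sh[\frac{\pi}{4\beta}(y-i\varepsilon)]$, splits it into a summability kernel $K_\varepsilon$ plus a truncated Hilbert-type kernel $\widetilde K_\varepsilon$, and gets both the $L^p$ and the a.e.\ limits from the classical Calder\'on--Zygmund theory of truncated singular integrals (Cotlar-type maximal estimates included). You instead work on the Fourier side, prove convergence on a dense class, and upgrade by uniform multiplier bounds plus a Poisson-type maximal inequality. The outline is viable, and your identification of the multipliers $m_\pm, m_0$ and the Mihlin argument for the boundedness of $\widetilde S$ match the paper exactly. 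But be aware that the kernel you get by convolving $\varphi_+$ directly (as in \eqref{SFstrip}) degenerates to a \emph{singular} kernel as $y\to\beta$, not a Poisson kernel; your maximal inequality only works because you convolve the already-constructed boundary trace $\widetilde{S\varphi}_\pm$ against the genuine (nonnegative, approximate-identity) Poisson kernel of the strip, and that representation itself has to be justified \emph{after} the norm convergence is in hand. So the order of your steps is essential and should be stated.

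The genuine gap is in the $3\varepsilon$ argument for $L^p$-norm convergence. You take the dense class $L^2\cap L^p$ and establish convergence of $S\psi(\cdot+iy)$ to $\widetilde{S\psi}$ there only in $L^2(\dsR)$ (via Plancherel and dominated convergence). The middle term of the $3\varepsilon$ estimate, $\|S\psi(\cdot+iy)-\widetilde{S\psi}\|_{L^p(\dsR)}$, must tend to zero in the $L^p$ norm, and on $\dsR$ convergence in $L^2$ implies nothing about convergence in $L^p$ for $p\neq 2$. As written the density argument is therefore circular: the convergence you need on the dense class is exactly of the type you are trying to prove. This is fixable --- for $\psi_\pm\in\clC^\infty_0(\dsR)$ the difference multiplier $\frac{e^{-(y+\beta)\xi}-e^{-2\beta\xi}}{2\ch[2\beta\xi]}$ is uniformly bounded and tends to $0$ pointwise, so dominated convergence applied to $m_y\widehat{\psi_+}$ and to its second derivative gives decay of $S\psi(\cdot+iy)-\widetilde{S\psi}_+$ in $L^\infty$ \emph{and} in the weight $x^2$, hence convergence in every $L^p$ --- but some such argument must be supplied; it does not follow from what you wrote. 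The paper avoids this issue entirely because the CZ machinery for truncated singular integrals delivers the $L^p$-norm convergence of $\widetilde K_\varepsilon\ast\varphi_+$ (and the a.e.\ convergence) for arbitrary $\varphi_+\in L^p$ in one stroke.
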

\begin{proof}
The boundedness $L^p(\p S_\beta)\to L^p(\p S_\beta)$ of the operator $\widetilde{S}$ is immediately obtained by means of Mihlin's multipliers theorem. In order to conclude the proof is enough to prove that \eqref{LimitsStrip} holds. We do not include the details of the proof in full generality, but we give the general idea in a simplified situation. Namely, we prove \eqref{LimitsStrip} for a function $\varphi=(\varphi_+,0)$ in $L^p(\p S_\beta)$ meaning that $\varphi_{-}\equiv 0$. Instead of computing the limit for $y\to\beta^-$, we compute the equivalent $\lim_{\varepsilon\to 0^+}S\varphi[\cdot+i(\beta-\varepsilon)]$, where, using Theorem \ref{StripRKHS} and \eqref{SFstrip},
\begin{align} \nonumber
  S\varphi[&x+i(\beta-\varepsilon)]=\frac{1}{2\beta}\int_{\dsR}\frac{\varphi_+(x-y)}{\ch[\frac{\pi}{4\beta}(y+i(2\beta-\varepsilon))]}\ dy=\frac{1}{2\beta i}\int_{\dsR}\frac{\varphi_+(x-y)}{\sh[\frac{\pi}{4\beta}(y-i\varepsilon)]}\ dy\\ \nonumber \label{SFyEps}
  &=\frac{1}{2\beta}\int_{\dsR}\!\varphi_+(x-y)\frac{\ch[\frac{\pi y}{4\beta}]\sin[\frac{\pi\varepsilon}{4\beta}]}{\sh^2[\frac{\pi y}{4\beta}]+\sin^2[\frac{\pi \varepsilon}{4\beta}]} dy-\frac{i}{2\beta}\int_{\dsR}\!\varphi_+(x-y)\frac{\sh[\frac{\pi y}{4\beta}]\cos[\frac{\pi\varepsilon}{4\beta}]}{\sh^2[\frac{\pi y}{4\beta}]+\sin^2[\frac{\pi \varepsilon}{4\beta}]} dy\\ 
&=[K_\varepsilon\ast \varphi_+](x)-i[\widetilde{K}_\varepsilon\ast \varphi_+](x).
\end{align} 
Thus, we can study the kernels $K_\varepsilon$ and $\widetilde{K}_\varepsilon$ separately. It is not hard to prove that the family of functions $\{\frac{K_\varepsilon}{2}\}$ is a summability kernel, while the operator associated to the kernel $\widetilde{K}_\varepsilon$ can be studied comparing it to the singular integral operator $T$ defined on Schwartz functions by
$$
Tg(x)=\lim_{\varepsilon\to 0^+}\int_{|\frac{\pi x}{4\beta}|>\varepsilon}\frac{g(x-y)}{\sh[\frac{\pi y}{4\beta}]}\ dy.
$$
The conclusion follows now by the classical theory of Calder\'{o}n- Zygmund singular integral operators.
\end{proof}

\begin{rmk}\label{ContinuityBoundary}
We point out that if $\varphi\in \clC^\infty_0(\partial S_\beta)$, i.e., $\varphi_+$ and $\varphi_-$ belong to $\clC^\infty_0(\dsR)$, then $S\varphi$ belongs to $H^p(S_\beta)\cap \clC(\overline{S_\beta})$. This fact easily follows by  Lebesgue's dominated convergence theorem and \eqref{SFstrip}.
\end{rmk}

\section{Hardy spaces on $D'_\beta$: the $L^2$-theory}\label{L2}
In this section we study in detail the Hardy space $H^2(D'_\beta)$. One of the main feature of $H^2(D'_\beta)$ is that it can be written as direct sum of orthogonal subspaces and each of these subspaces turns out to be isometric to a weighted $H^2$ space on a strip (Theorem \ref{H2description}).

We use the notation $(z_1,. z_2)$ to denote an inner point of $D'_\beta$, while we use the notation $(\zeta_1,\zeta_2)$ to denote a point of the distinguished boundary $\p D'_\beta$.

 Using only the definition of $H^p(D'_\beta)$, it is not hard to prove that every function $F$ in $H^p(D'_\beta)$, $p\in(1,\infty)$, admits a boundary value function
$\widetilde{F}=(\widetilde{F}_1,\widetilde{F}_2,\widetilde{F}_3,\widetilde{F}_4)$
in $L^p(\partial D'_\beta)$ at least in a weak-$\ast$ sense. 

We define a family of functions $F_{t,s}=( F_{1,t,s}, F_{2,t,s}, F_{3,t,s}, F_{4,t,s})$ in $L^p(\p D'_\beta)$ by restricting $F$ to copies of the distinguished boundary $\p D'_\beta$ inside the domain $D'_\beta$. Namely, given a function $F$  in $H^p(D'_\beta)$, $p\in(1,\infty)$, for every $(t,s)\in[0,\pih)\times[0,\beta-\pih)$,  
we define
\begin{align*}
&F_{1,t,s}(\zeta_1,\zeta_2):=F\big(\re\zeta_1+i\frac{s+t}{\beta}\im\zeta_1,e^{-\frac{1}{2}(\beta-\pih-s)}\zeta_2\big);\\
&F_{2,t,s}(\zeta_1,\zeta_2):=F\big(\re\zeta_1+i\frac{s-t}{\beta-\pi}\im\zeta_1,e^{-\frac{1}{2}(\beta-\pih-s)}\zeta_2\big);\\
&F_{3,t,s}(\zeta_1,\zeta_2):=F\big(\re\zeta_1+i\frac{s+t}{\beta}\im\zeta_1,e^{\frac{1}{2}(\beta-\pih+s)}\zeta_2\big);\\
&F_{4,t,s}(\zeta_1,\zeta_2):=F\big(\re\zeta_1+i\frac{s-t}{\beta-\pi}\im\zeta_1,e^{\frac{1}{2}(\beta-\pih+s)}\zeta_2\big).
\end{align*}

The following proposition is elementary.
\begin{prop}\label{Weak*Hp}
Let $F$ be a function in $H^p(D'_\beta),p\in(1,\infty)$. Then, the following facts hold:
\begin{itemize}
\item [$(i)$]
there exists a subsequence $F_{(t,s)_n}$ which admits a weak-$\ast$ limit $\widetilde{F}$ in $L^p(\partial D'_\beta)$;
\item[$(ii)$]
for every compact subset $K$ of $D'_\beta$, the estimate
$$
\sup_{(z_1,z_2)\in K}|F(z_1,z_2)|\leq C_{K}\|F\|^p_{H^p}
$$ 
holds.
\end{itemize}
\end{prop}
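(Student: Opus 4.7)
The engine behind both parts is the identity $\norm{F_{t,s}}{L^p(\p D'_\beta)}^p = \clL_p F(t,s)$. Indeed, on $E_1$ one parametrizes $\zeta_1 = x + i\beta$ and $\zeta_2 = e^{(\beta-\pih)/2}e^{2\pi i\theta}$, and the definition of $F_{1,t,s}$ gives
$$F_{1,t,s}(\zeta_1,\zeta_2) = F\lt x + i(s+t),\, e^{s/2}e^{2\pi i\theta}\rt,$$
whose $L^p(\dsR\times\dsT)$ norm to the $p$-th power is precisely the first summand of $\clL_p F(t,s)$; the other three sheets are handled identically. Consequently, for every $(t,s) \in [0,\pih)\times[0,\beta-\pih)$,
$$\norm{F_{t,s}}{L^p(\p D'_\beta)}^p = \clL_p F(t,s) \leq \|F\|_{H^p(D'_\beta)}^p.$$

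Statement (i) follows immediately from the sequential Banach--Alaoglu theorem. Since $L^{p'}(\p D'_\beta)$ is separable (its underlying measure space being four copies of $\dsR\times\dsT$) and $L^p(\p D'_\beta)$ is its dual, fixing any sequence $(t,s)_n \in [0,\pih)\times[0,\beta-\pih)$ (for instance with $(t,s)_n \to (\pih, \beta-\pih)$), the uniformly bounded family $\{F_{(t,s)_n}\}$ admits a weak-$*$ convergent subsequence with limit some $\widetilde F \in L^p(\p D'_\beta)$.

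For (ii) I would exploit the submean value property of the plurisubharmonic function $|F|^p$. Since $K$ is a compact subset of the open set $D'_\beta$, there exists $r>0$ such that $D(z_1,r)\times D(z_2,r) \subset D'_\beta$ for every $(z_1,z_2) \in K$, so
$$|F(z_1,z_2)|^p \leq \frac{1}{\pi^2 r^4}\int_{D(z_2,r)}\int_{D(z_1,r)} |F(\zeta_1,\zeta_2)|^p\, dA(\zeta_1)\, dA(\zeta_2).$$
The substitution $\zeta_1 = x + iy$, $\zeta_2 = e^{u/2}e^{2\pi i\theta}$ converts $dA(\zeta_1)\,dA(\zeta_2)$ into $\pi e^u\,dx\,dy\,du\,d\theta$. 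For $r$ small enough (uniformly in $(z_1,z_2)\in K$) the projection of the integration region onto the $(y,u)$-plane is contained in a compact set $Q \subset \{|y-u|<\pih,\,|u|<\beta-\pih\}$. For each $(y,u)\in Q$ the signs of $u$ and $y-u$ select one of the four sheets and corresponding parameters $(t,s)\in[0,\pih)\times[0,\beta-\pih)$ such that $\int_\dsR\int_0^1 |F(x+iy, e^{u/2}e^{2\pi i\theta})|^p\, dx\, d\theta$ is the associated summand of $\clL_p F(t,s)$, hence $\leq \|F\|_{H^p}^p$ by the uniform bound above. Integrating in $(y,u)$ over $Q$, on which $e^u$ is bounded and which has finite measure, yields the pointwise estimate with a constant $C_K$ independent of the chosen $(z_1,z_2)\in K$.

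The only delicate step is the bookkeeping in (ii) linking each $(y,u)$ in $Q$ to the correct sheet of $\clL_p F$; apart from this case split, the proof is a routine combination of Fubini, subharmonicity and Banach--Alaoglu.
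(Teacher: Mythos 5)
Your proof is correct, and it fills in exactly the argument the paper has in mind: the author states the proposition without proof (calling it ``elementary''), and your two ingredients --- the identity $\|F_{t,s}\|_{L^p(\partial D'_\beta)}^p=\mathcal{L}_pF(t,s)$ combined with Banach--Alaoglu for $(i)$, and the sub-mean-value property of $|F|^p$ on polydiscs for $(ii)$ --- are precisely the standard route, the latter being the same device the paper invokes for the strip estimate \eqref{UniformCompact}. The case bookkeeping you flag (sign of $u$ selecting $s=|u|$ and sign of $y-u$ selecting the sheet with $t=|y-u|$) works out, so no gap remains.
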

We now focus on the space $H^2(D'_\beta)$ and prove that it admits a nice decomposition which allows to describe explicitly its reproducing kernel. 

\begin{thm}\label{H2description}
The Hardy space $H^2(D'_\beta)$ admits an orthogonal decomposition
\begin{equation}\label{DirectSumH2}
H^2(D'_\beta)=\bigoplus_{j\in\dsZ}\clH^2_j,
\end{equation}
where each $\clH^2_j$ is the subspace of $H^2(D'_\beta)$ defined as
\begin{equation}\label{Subspaces}
\mathcal{H}^2_j=\{F\in H^2(D'_\beta) : F(z_1,e^{i\theta}z_2)=e^{ij\theta}F(z_1,z_2)\}.
\end{equation}
Moreover, each subspace $\clH^2_j$ is isometric to the Hardy space of the strip $H^2(S_\beta)$ equipped with a weighted norm depending on $j$.
\end{thm}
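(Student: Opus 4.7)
The plan is to exploit the rotational invariance of $D'_\beta$ in the $z_2$-variable and Fourier-decompose each $F\in H^2(D'_\beta)$ along the circles $\{|z_2|=r\}$. For $j\in\dsZ$ I define the $j$-th Fourier projection
$$(P_j F)(z_1,z_2):=\int_0^1 F(z_1,e^{-2\pi i\theta}z_2)\,e^{2\pi i j\theta}\,d\theta.$$
Since the rotations $z_2\mapsto e^{i\phi}z_2$ preserve $D'_\beta$, the integrand is defined on all of $D'_\beta$; differentiation under the integral shows $P_j F$ is holomorphic, and a change of variable $\theta\mapsto\theta-\phi/(2\pi)$ gives $P_j F(z_1,e^{i\phi}z_2)=e^{ij\phi}P_j F(z_1,z_2)$, so $P_j F\in\clH^2_j$. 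Minkowski's inequality applied to each of the four summands of \eqref{WormGrowth} yields $\|P_j F\|_{H^2(D'_\beta)}\leq \|F\|_{H^2(D'_\beta)}$, so each $P_j$ is a bounded idempotent on $H^2(D'_\beta)$.

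Orthogonality of $\clH^2_j$ and $\clH^2_k$ for $j\neq k$ then follows from the angular identity $\int_0^1 e^{2\pi i(j-k)\theta}d\theta=0$: for $F\in\clH^2_j$ and $G\in\clH^2_k$ the inner $\theta$-integral in each of the four terms of $\langle F_{t,s},G_{t,s}\rangle_{L^2(\p D'_\beta)}$ collapses to $e^{2\pi i(j-k)\theta}$ times a factor independent of $\theta$, hence vanishes. For the completeness of the decomposition, I expand $F(z_1,\cdot)$ as a Laurent series $\sum_j f_j(z_1)z_2^j$ on the annular fiber $\{z_2:(z_1,z_2)\in D'_\beta\}$, observe that $P_j F(z_1,z_2)=f_j(z_1)z_2^j$, and use the pointwise Parseval identity
$$\int_0^1 |F(z_1,re^{2\pi i\theta})|^2\,d\theta=\sum_{j\in\dsZ}r^{2j}|f_j(z_1)|^2$$
to obtain $\clL_2 F(t,s)=\sum_{j\in\dsZ}\clL_2(P_jF)(t,s)$ for each fixed $(t,s)$. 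Interchanging the supremum with the nonnegative sum then yields $\|F\|^2_{H^2(D'_\beta)}=\sum_j \|P_jF\|^2_{H^2(D'_\beta)}$ and hence \eqref{DirectSumH2}.

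For the isometry with a weighted $H^2(S_\beta)$, fix $F\in\clH^2_j$: the function $z_2^{-j}F(z_1,z_2)$ is rotationally invariant in $z_2$ on each annular fiber and so is constant in $z_2$ by Laurent uniqueness. This defines a holomorphic function $f$ on the projection of $D'_\beta$ onto the first coordinate, which a direct check shows coincides with $S_\beta$ (using $\beta>\pih$). Substituting $F(z_1,z_2)=f(z_1)z_2^j$ into \eqref{WormGrowth} and integrating out $\theta$ gives
\begin{align*}
\clL_2 F(t,s)=\,&e^{sj}\int_\dsR\bigl(|f(x+i(s+t))|^2+|f(x+i(s-t))|^2\bigr)dx\\
&+e^{-sj}\int_\dsR\bigl(|f(x-i(s+t))|^2+|f(x-i(s-t))|^2\bigr)dx,
\end{align*}
whose supremum over $(t,s)\in[0,\pih)\times[0,\beta-\pih)$ defines the sought $j$-dependent weighted norm on $H^2(S_\beta)$; the assignment $F\leftrightarrow f$ is by construction bijective and isometric.

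The principal obstacle I anticipate is the interchange of the supremum over $(t,s)$ with the infinite sum over $j$ in the Parseval step: since $\|\cdot\|_{H^2(D'_\beta)}$ is a supremum rather than an integral, angular $L^2$-orthogonality at each fixed $(t,s)$ does not automatically yield a Parseval identity in the sup-norm. I plan to handle this by first proving the identity for finite Laurent truncations (where the interchange is trivial), then passing to the limit using the monotonicity of $\clL_2(P_jF)$ as $(t,s)\to(\pih,\beta-\pih)$, which ensures that all partial suprema are attained simultaneously in the corner limit.
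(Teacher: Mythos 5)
Your proposal is correct and follows essentially the same route as the paper: angular Fourier projections, Parseval in $\theta$ at each fixed $(t,s)$, and an interchange of supremum and sum justified by the monotonicity of each $\clL_2(P_jF)(t,s)$ toward the corner $(\pih,\beta-\pih)$. The one assertion you leave unproved --- that monotonicity --- is exactly what the paper extracts from the Paley--Wiener theorem for the strip, which rewrites $\clL_2(P_jF)(t,s)$ as $\frac{2}{\pi}\int_{\dsR}|\widehat{f_{j,0}}(\xi)|^2\ch(2t\xi)\ch[2s(\xi-\frac{j}{2})]\,d\xi$.
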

The proof of Theorem \ref{H2description} will follow from a series of results that we state and prove separately for the reader's convenience.

Using the rotationally invariance of $D'_\beta$ in the $z_2$-variable and the connectedness of the set $D'_\beta(z_1)=\{z_2\in\dsC (z_1,z_2)\in D'_\beta\}$ for every fixed $z_1$, it is not hard to obtain the following proposition. See also \cite{MR1149863}  or  \cite{MR2393268}.
\begin{prop}\label{Barrett}
Let $F\in H^2(D'_\beta)$. Then,
\begin{align}\label{PointiwiseBar}
  F(z_1,z_2)&=\sum_{j\in\dsZ}\int_0^1 F(z_1,e^{2\pi i\theta}z_2)e^{-2\pi ij\theta}\ d\theta=\sum_{j\in\dsZ}F_j(z_1,z_2)=\sum_{j\in\dsZ}f_j(z_1) z_2^j,
\end{align}
where the series converges pointwise for every $(z_1,z_2)\in D'_\beta$ and each $f_j$ belongs to the Hardy space $H^2(S_\beta)$.
\end{prop}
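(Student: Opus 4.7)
The plan is to exploit the rotational invariance of $D'_\beta$ in the $z_2$-variable (together with the fact that, for fixed $z_1\in S_\beta$, the fiber $D'_\beta(z_1)=\{z_2:(z_1,z_2)\in D'_\beta\}$ is a non-empty connected annulus, since the defining inequalities force $\log|z_2|^2$ to lie in the non-empty intersection of two intervals whenever $|\im z_1|<\beta$) in order to expand $F(z_1,\cdot)$ as a Laurent series whose coefficients will be the $f_j$'s. First I would fix $z_1\in S_\beta$ and apply classical Laurent theory on the annular fiber: $F(z_1,z_2)=\sum_{j\in\dsZ}c_j(z_1)z_2^j$, converging normally on compact subsets of the annulus. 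Writing the standard Laurent coefficient integral in polar form in $\theta$ shows
$$c_j(z_1)z_2^j=\int_0^1 F(z_1,e^{2\pi i\theta}z_2)e^{-2\pi ij\theta}\,d\theta=F_j(z_1,z_2),$$
so $f_j(z_1):=c_j(z_1)$ and the integral representation matches the statement. Since this value is independent of the chosen $z_2$ in the fiber, and since for any fixed admissible $z_2$ the map $z_1\mapsto F(z_1,e^{2\pi i\theta}z_2)$ stays holomorphic on a neighborhood in $S_\beta$, differentiation under the integral gives that $f_j$ is holomorphic on the strip $S_\beta$.

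Next I would show $f_j\in H^2(S_\beta)$ by extracting the right slice of the definition \eqref{WormGrowth} of $\clL_2F(t,s)$ via Parseval's identity on the circle. Because the Laurent expansion converges absolutely and uniformly in $\theta$ at fixed radius $\rho$,
$$\int_0^1|F(z_1,\rho e^{2\pi i\theta})|^2\,d\theta=\sum_{j\in\dsZ}|f_j(z_1)|^2\rho^{2j}.$$
Substituting $\rho=e^{\pm s/2}$ and the appropriate choice of $z_1$ into each of the four summands of $\clL_2F(t,s)$ and applying Tonelli (all summands being non-negative) produces
\begin{align*}
\clL_2F(t,s)&=\sum_{j\in\dsZ}e^{sj}\!\int_\dsR\!\bigl[|f_j(x+i(s+t))|^2+|f_j(x+i(s-t))|^2\bigr]\,dx\\
&\quad+\sum_{j\in\dsZ}e^{-sj}\!\int_\dsR\!\bigl[|f_j(x-i(s+t))|^2+|f_j(x-i(s-t))|^2\bigr]\,dx.
\end{align*}
Every summand is then bounded by $\|F\|^2_{H^2(D'_\beta)}$. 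For any $y\in[0,\beta)$ one can choose $(t,s)\in[0,\pih)\times[0,\beta-\pih)$ with $s+t=y$, giving $\int_\dsR|f_j(x\pm iy)|^2\,dx\leq e^{\mp sj}\|F\|^2_{H^2(D'_\beta)}$; taking the supremum over $y$ on either side yields a finite (though $j$-dependent) bound, which is all that is needed to conclude $f_j\in H^2(S_\beta)$.

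The pointwise identity $F(z_1,z_2)=\sum_j f_j(z_1)z_2^j$ at every $(z_1,z_2)\in D'_\beta$ is then nothing but the uniform convergence of the Laurent series on compact subsets of the fiber $D'_\beta(z_1)$. The only genuine technical step is the interchange of sum and integral that produces the displayed formula for $\clL_2F(t,s)$; since all terms are non-negative this is just Tonelli, so once the Laurent machinery on the annular fiber is in place the proof is essentially a matter of bookkeeping, the one subtlety being to notice that the parameter ranges $[0,\pih)$ and $[0,\beta-\pih)$ for $t$ and $s$ are exactly what is needed to cover every height $y$ in $(-\beta,\beta)$ within the strip.
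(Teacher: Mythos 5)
Your proposal is correct and follows exactly the route the paper intends (and leaves to the reader, citing Barrett): rotational invariance plus connectedness of the annular fibers gives the Laurent expansion $F(z_1,\cdot)=\sum_j c_j(z_1)z_2^j$, the coefficient integral identifies $F_j$, and Parseval in $\theta$ combined with Tonelli extracts the $j$-dependent bound on $\sup_y\int_\dsR|f_j(x\pm iy)|^2\,dx$ that places each $f_j$ in $H^2(S_\beta)$. The same Parseval computation is precisely what underlies Propositions \ref{isometry} and \ref{orthogonality} in the paper, so your write-up is a faithful filling-in of the omitted argument.
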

Since each function $f_j$ belongs to the Hardy space $H^2(S_\beta)$, all the results contained in the previous section are available. In particular, we know that each function $f_j$ admits a boundary value function $\widetilde{f}_j$ in $L^2(\partial S_\beta)$. 


By the Paley--Wiener Theorem for the strip, the $H^2(D'_\beta)$ norm of each function $F_j$ in the sum \eqref{PointiwiseBar} is easily computed. In order to be consistent with the notation of Theorem \ref{PWstrip}, we denote by $f_{j,0}$ the restriction of the function $f_j$ to the real line.
\bprop\label{isometry}
Let $F_j(z_1,z_2)=f_j(z_1)z_2^j$ be a function in $\clH_j^2$, $j\in\dsZ$. Then,
\begin{align}
\nonumber\|F_j\|^2_{H^2(D'_\beta)}&=\bigg[ e^{j(\beta-\pih)}\|f_j[\cdot+i(\beta-\pih)]\|^2_{H^2(S_{\pih})}+e^{-j(\beta-\pih)}\|f_j[\cdot-i(\beta-\pih)]\|^2_{H^2(S_{\pih})}\bigg]\\ \nonumber
&=\frac{2}{\pi}\int_{\dsR}|\widehat{f_{j,0}}(\xi|^2\ch(\pi\xi)\ch[(2\beta-\pi)(\xi-\frac{j}{2})]\ d\xi.
\end{align}
In particular, 
$$
\sup\limits_{(t,s)}\clL_2F_j(t,s)=\lim\limits_{(t,s)\rightarrow(\pih,\beta-\pih)}\clL_2F_j(t,s).
$$
\eprop

\begin{rmk}\label{RemarkStrip}
Notice that, for every $j$ fixed, the quantity 
\begin{equation}\label{NormWeightedStrip}
\|f_j\|^2_{H^2_j(S_\beta)}=\|\widetilde{f_j}\|^2_{L^2_j(\p S_\beta)}:= \frac{2}{\pi}\int_{\dsR}|\widehat{f_{j,0}}(\xi)|^2\ch[\pi\xi]\ch[(2\beta-\pi)(\xi-\frac{j}{2})]\ d\xi
\end{equation}
defines a norm on $H^2(S_\beta)$ equivalent to the standard one. In conclusion, the previous proposition shows that $F_j\mapsto \widetilde{f}_j$ is an isometry between $\clH^2_j$ and $L^2_j(\partial S_\beta)$. This proves the second part of Theorem \ref{H2description}.
\end{rmk}
\bprop\label{orthogonality}
Let be $F$ a function in $H^2(D'_\beta)$. Then 
\begin{align*}
\|F\|_{H^2(D'_\beta)}^2&=\sup_{(t,s)}\sum_{j\in\dsZ}\clL_2F_j(t,s)=\sum_{j\in\dsZ}\sup_{(t,s)}\clL_2F_j(t,s)=\sum_{j\in\dsZ}\|F_j\|^2_{H^2(D'_\beta)},
\end{align*}
where the supremum is taken over $(t,s)\in[0,\pih)\times[0,\beta-\pih)$.
\eprop
\begin{proof}
We already know that $\|F\|_{H^2(D'_\beta)}^2=\sup\limits_{(t,s)}\sum_{j\in\dsZ}\clL_2F_j(t,s) $; it trivially follows from the orthogonality of trigonometric monomials. We would like to prove that it is possible to switch the supremum with the sum, i.e.,
$$
\sup_{(t,s)}\sum_{j\in\dsZ}\clL_2F_j(t,s)=\sum_{j\in\dsZ}\sup_{(t,s)}\clL_2F_j(t,s).
$$
Since we know from Proposition \ref{isometry} that $\sup\limits_{(t,s)}\clL_2F_j(t,s)=\lim\limits_{(t,s)}\clL_2F_j(t,s)$, we can conclude the result using monotone convergence.
\end{proof}
\begin{rmk}
From Proposition \ref{Barrett} and Proposition \ref{orthogonality} it is easily deduced that the series \eqref{PointiwiseBar} converges not only pointwise, but also in norm. That is,
$$
\norm{F-\sum_{j=-N}^{N}F_j}{H^2(D'_\beta)}\to 0
$$
as $N$ tends to $+\infty$.
\end{rmk}

Finally, we are  able to prove that a function $F\in
H^2(D'_\beta)$ admits boundary values in $L^2(\partial
D'_\beta)$.
  Let $F_{t,s}$ be the function defined in Proposition \ref{Weak*Hp}. 
\begin{prop}\label{BoundaryValuesH2WormStrip}
Let $F(z_1,z_2)=\sum\limits_{j\in\dsZ}f_j(z_1)z_2^j$ be a function in $H^2(D'_\beta)$. For $(\zeta_1,\zeta_2)\in\partial D'_\beta$ define
$$
\widetilde{F}(\zeta_1,\zeta_2):=\sum_{j\in\dsZ}\widetilde{F_j}(\zeta_1,\zeta_2)=\sum_{j\in\dsZ}\widetilde{f}_j(\zeta_1)\zeta_2^j.
$$
Then $F_{t,s}\to \widetilde{F}$ in $L^2(\p D'_\beta)$  as $(t,s)\to(\pih,\beta-\pih )$ and $\| F\|_{H^2(D'_\beta)}= \|\widetilde F\|_{L^2(\p D'_\beta)}$.
\end{prop}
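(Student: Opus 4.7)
The approach is to reduce the statement to the $L^2$-boundary value theory for the symmetric strip developed in Section~\ref{strip}, using the orthogonal Fourier decomposition in the $\zeta_2$-variable from Theorem~\ref{H2description}.

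First I would exploit the Fourier expansion in the angular variable of $\zeta_2$. Each component $E_\ell$ is identified with $\dsR \times \dsT$, and since $|\zeta_2|$ is constant on $E_\ell$, writing $F = \sum_j f_j(z_1) z_2^j$ as in Proposition~\ref{Barrett} and restricting to the interior copy of $\p D'_\beta$ parametrized by $(t,s)$ yields
$$
F_{\ell,t,s}(\zeta_1,\zeta_2) = \sum_{j \in \dsZ} F_{j,\ell,t,s}(\zeta_1,\zeta_2),
$$
with the analogous expansion built into the definition of $\widetilde F$. Parseval's identity in the angular variable $\gamma \in \dsT$, applied for each fixed $x \in \dsR$ on each of the four components, then gives the key orthogonality identity
$$
\|F_{t,s} - \widetilde F\|^2_{L^2(\p D'_\beta)} = \sum_{j \in \dsZ} \|F_{j,t,s} - \widetilde{F_j}\|^2_{L^2(\p D'_\beta)}.
$$

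Next I would treat each summand. For a fixed $j$ the difference $F_{j,t,s} - \widetilde{F_j}$ lives in the single $\zeta_2$-mode $e^{2\pi i j\gamma}$, so integrating out $\gamma$ reduces its $L^2(\p D'_\beta)$-norm to a sum of four weighted $L^2(\dsR)$-norms involving $f_j(\,\cdot + iy) - \widetilde f_j(\,\cdot \pm i\beta)$ for appropriate $y \to \pm\beta$. Theorem~\ref{StripRKHS} guarantees each of these tends to zero as $(t,s) \to (\pih,\beta-\pih)$. Combining this with Proposition~\ref{isometry} (whose ``in particular'' clause yields $\|\widetilde{F_j}\|^2_{L^2(\p D'_\beta)} = \|F_j\|^2_{H^2(D'_\beta)}$) gives termwise convergence
$\|F_{j,t,s} - \widetilde{F_j}\|_{L^2(\p D'_\beta)} \to 0$ for every $j \in \dsZ$.

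Finally, I would interchange limit and sum via dominated convergence for series. The uniform bound
$$
\|F_{j,t,s} - \widetilde{F_j}\|^2_{L^2(\p D'_\beta)} \le 2\|F_{j,t,s}\|^2_{L^2(\p D'_\beta)} + 2\|\widetilde{F_j}\|^2_{L^2(\p D'_\beta)} \le 4\,\|F_j\|^2_{H^2(D'_\beta)}
$$
is summable in $j$ by Proposition~\ref{orthogonality}, so the swap is legitimate and yields $\|F_{t,s} - \widetilde F\|_{L^2(\p D'_\beta)} \to 0$. The norm identity is then immediate: $\|\widetilde F\|^2_{L^2(\p D'_\beta)} = \sum_j \|\widetilde{F_j}\|^2_{L^2(\p D'_\beta)} = \sum_j \|F_j\|^2_{H^2(D'_\beta)} = \|F\|^2_{H^2(D'_\beta)}$, using Proposition~\ref{isometry} on each term and Proposition~\ref{orthogonality} for the last equality. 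The only subtle step is the interchange of the joint limit in $(t,s)$ with the sum over $j$, and the dominated-convergence bound displayed above is exactly what controls it.
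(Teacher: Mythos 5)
Your proposal is correct and follows essentially the same route as the paper: Parseval orthogonality in the angular mode $j$, termwise convergence via the strip theory of Section~\ref{strip}, a summable dominating bound from Propositions~\ref{isometry} and~\ref{orthogonality} to justify interchanging the limit with the sum, and the norm identity from the isometry on each $\clH^2_j$. The only cosmetic difference is that you use the bound $2\|F_{j,t,s}\|^2 + 2\|\widetilde{F_j}\|^2$ where the paper expands the square and invokes H\"older, which is the same estimate.
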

\begin{proof}
Proposition \ref{orthogonality} guarantees that $\widetilde{F}$ is well defined.
Since $F$ is in $H^2(D'_\beta)$, it holds
$$
\|\widetilde{F}-F_{t,s} \|_{L^2(\partial D'_\beta)}^2
=\sum_{j\in\dsZ}\|\widetilde{F}_j-(F_{j})_{t,s}\|_{L^2(\partial D'_\beta)}^2<\infty.
$$
Moreover, $\|\widetilde{F}_j-(F_{j})_{t,s}\|_{L^2(\p D'_\beta)}^2\to 0$ as $(t,s)\to(\pih,\beta-\pih)$. Notice that
\begin{align}\label{Dominated}
\|\widetilde F-F_{t,s}\|^2_{L^2}\leq \|\widetilde{F}_j\|^2_{L^2}+\sup_{(t,s)}\|(F_j)_{t,s}\|^2_{L^2}+2\sup_{(t,s)}\|(F_j)_{t,s}\|_{L^2}\|\widetilde{F}_j\|_{L^2}
\end{align}
where the suprema are taken for $(t,s)\in[0,\pih)\times[0,\beta-\pih)$.  From Proposition \ref{orthogonality} and H\"older's inequality we obtain that the right-hand side of \eqref{Dominated} is summable. Therefore, by Lebesgue's dominated convergence theorem, we can switch the sum and the limit obtaining
\begin{align*}
\lim_{(t,s)\to(\pih,\beta-\pih)}\|\widetilde{F}-F_{t,s}\|_{L^2(\partial D'_\beta)}^2&=\sum_{j\in\dsZ}\lim_{(t,s)\to(\pih,\beta-\pih)}\!\|\widetilde{F}_{j}-(F_{j})_{t,s}\|_{L^2(\partial D'_\beta)}^2=0.
\end{align*}
The conclusion follows.
\end{proof}
As in the case of the strip, we identify the inner product in $H^2(D'_\beta)$ as an $L^2$ inner product on the distinguished boundary. Namely, given $F,G$ in $H^2(D'_\beta)$, we define
\begin{align}\label{ScalarProduct}
\left<F,G\right>_{H^2(D'_\beta)}:=\big<\widetilde{F},\widetilde{G}\big>_{L^2(\partial D'_\beta)}=\sum_{k=1}^4 \int_{E_k}\widetilde{F}(\zeta_1,\zeta_2)\overline{\widetilde{G}(\zeta_1,\zeta_2)}\ d\zeta_1 d\zeta_2.
\end{align}
The decomposition \eqref{DirectSumH2} is an orthogonal decomposition with respect to this inner product and Theorem \ref{H2description} is finally proved.
\subsection{The Szeg\H{o} kernel and projection of $D'_\beta$}
Before investigating the reproducing kernel $K_{D'_\beta}$ of $H^2(D'_\beta)$, we investigate the reproducing kernels of the subspaces $\clH^2_j$. The particular structure of each $\clH^2_j$ and Proposition \ref{isometry} allow us to look for the kernels of the spaces $H^2_j(S_\beta)$, that is the Hardy spaces of the strip endowed with the weighted norm defined by \eqref{NormWeightedStrip}.
\bprop
The reproducing kernel of $H^2_j(S_\beta)$ is the function
$$
k_j(z_1,z_2)=\frac{1}{8\pi}\int_{\dsR}\frac{e^{i(z_1-\overline{z_2})\xi}}{\ch[\pi\xi]\ch[(2\beta-\pi)(\xi-\frac{j}{2})]}\
d\xi .
$$
\eprop
\begin{proof}
Given $z_2$ in $S_\beta$ and $f\in H^2_j(S_\beta)$, using the  definition of reproducing kernel, Remark \ref{RemarkStrip} and Theorem \ref{PWstrip}, we obtain
\begin{align*}
f(z_2)&=\left<f,k_j(\cdot,z_2)\right>_{H^2_j(S_\beta)}\\
&=\frac{2}{\pi}\int_{\dsR}\widehat{f_0}(\xi)\overline{\widehat{k_{j,0}}(\xi,z_2)}\ch(\pi\xi)\ch[(2\beta-\pi)(\xi-\frac{j}{2})]\ d\xi\\
&=\frac{1}{2\pi}\int_{\dsR}\widehat{f_0}(\xi)e^{iz_2\xi}\ d\xi,
\end{align*}
It follows
$$
\widehat{k_{j,0}}(\xi,z_2)=\frac{1}{4}\frac{e^{-
i\overline{z_2}\xi}}{\ch(\pi\xi)\ch[(2\beta-\pi)(\xi-\frac{j}{2})]}
$$
and, using the inverse Fourier transform, we finally obtain
$$
k_{j}(z_1,z_2)=\frac{1}{8\pi}\int_{\dsR}\frac{e^{i(z_1-\overline{z_2})\xi}}{\ch[\pi\xi]\ch[(2\beta-\pi)(\xi-\frac{j}{2})]}\
d\xi. 
$$
\end{proof}
The reproducing kernel of $H^2(D'_\beta)$ is then given by 
\begin{align}\nonumber
K_{D'_\beta}[(w_1,w_2),(z_1,z_2)]&=\sum_{j\in\dsZ}{w_2}^j \overline{z_2}^jk_{j}(w_1,z_2)\\  \label{KernelSum}
&=\sum\limits_{j\in\dsZ}\frac {{w_2}^j \overline{z_2}^j}{8\pi}\int_{\dsR}\frac{e^{i(w_1-\overline{z_1 })\xi}}{\ch[\pi\xi]\ch[(2\beta-\pi)(\xi-\frac{j}{2})]}\ d\xi, 
\end{align}
where the series converges in $H^2(D'_\beta)$ for every fixed $(z_1,z_2)$ in $D'_\beta$.

If we fix a compact subset $K$ in $D'_\beta$, we have a stronger convergence for the series which defines $K_{D'_\beta}$.
\begin{prop}\label{UniformConvergenceKernel}
Let us consider $K_{D'_\beta}(z,\zeta)=K_{D'_\beta}[(z_1,z_2),(\zeta_1,\zeta_2)]$ where $(\zeta_1,\zeta_2)\in\partial D'_\beta$ and $(z_1,z_2)$ varies in a compact set $K\subseteq D'_\beta$. Then,
$$
\sum_{j\in\dsZ}\sup_{(z,\zeta)\in K\times\partial D'_\beta} \big|k_j(z_1,\zeta_1)z_2^j\overline{\zeta}_2^j\big|<\infty
$$
\end{prop}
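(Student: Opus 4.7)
The plan is to establish a pointwise bound $|k_j(z_1,\zeta_1)\,z_2^j\bar\zeta_2^j|\leq C_K e^{-\epsilon |j|/2}$ uniformly on $K\times \partial D'_\beta$ for some $\epsilon=\epsilon(K)>0$; the conclusion then follows by summing a geometric series. The compactness of $K$ in $D'_\beta$ produces $\epsilon>0$ such that $|\log|z_2|^2|\leq \beta-\tfrac{\pi}{2}-\epsilon$ and $|\im z_1-\log|z_2|^2|\leq \tfrac{\pi}{2}-\epsilon$ on $K$, whereas on $\partial D'_\beta$ one has $\log|\zeta_2|^2\in\{\pm(\beta-\tfrac{\pi}{2})\}$ and $\im\zeta_1-\log|\zeta_2|^2\in\{\pm\tfrac{\pi}{2}\}$; thus the boundary fails these strict inequalities by precisely the amount the compactness of $K$ supplies.

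First I would estimate $|k_j|$ using the trivial bound $\ch(x)\geq \tfrac{1}{2}e^{|x|}$ applied to the two $\ch$-factors in the integral defining $k_j$, obtaining
$$
|k_j(z_1,\zeta_1)|\leq \frac{1}{2\pi}\int_{\dsR} e^{-\alpha\xi - \pi|\xi| - (2\beta-\pi)|\xi - j/2|}\,d\xi,\qquad \alpha=\im z_1+\im \zeta_1.
$$
The exponent is piecewise linear in $\xi$ with corners at $0$ and $j/2$; since $|\alpha|$ stays bounded away from $2\beta$ on $K\times\partial D'_\beta$, the outer slopes $2\beta-\alpha$ and $-(2\beta+\alpha)$ have the correct signs for convergence, and the integral is controlled by $e^{\max}$ with the maximum attained at one of the corners. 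A case check on the sign of the middle slope $2\beta-2\pi-\alpha$ yields, for $j>0$,
$$
|k_j(z_1,\zeta_1)|\ \leq\ C_K\bigl(e^{-(\alpha+\pi)j/2}+e^{-(2\beta-\pi)j/2}\bigr),
$$
and the substitution $\xi\mapsto -\xi$ gives the mirror estimate $|k_j|\leq C_K(e^{(\alpha-\pi)|j|/2}+e^{-(2\beta-\pi)|j|/2})$ for $j<0$.

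Multiplying by $|z_2^j\bar\zeta_2^j|=e^{(j/2)(\log|z_2|^2+\log|\zeta_2|^2)}$, writing $y=\im z_1-\log|z_2|^2$, and letting $\tau=\pm 1$ encode $\im\zeta_1-\log|\zeta_2|^2=\tau\tfrac{\pi}{2}$ on each component $E_\ell$, the two resulting quantities become, for $j>0$,
\begin{align*}
|z_2^j\bar\zeta_2^j|\,e^{-(\alpha+\pi)j/2}&=e^{-(j/2)(y+\pi+\tau\pi/2)},\\
|z_2^j\bar\zeta_2^j|\,e^{-(2\beta-\pi)j/2}&=e^{(j/2)(\log|z_2|^2+\log|\zeta_2|^2-(2\beta-\pi))}.
\end{align*}
The second exponential is at most $e^{-\epsilon j/2}$ because $\log|z_2|^2+\log|\zeta_2|^2\leq(\beta-\pi/2-\epsilon)+(\beta-\pi/2)$; the first is at most $e^{-\epsilon j/2}$ for both $\tau=\pm 1$ since $y+\pi+\tau\pi/2\geq \epsilon$ given $y\geq -\pi/2+\epsilon$. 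The case $j<0$ is handled by the symmetric substitution $(\alpha,\log|z_2|^2,\log|\zeta_2|^2)\mapsto(-\alpha,-\log|z_2|^2,-\log|\zeta_2|^2)$.

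Hence $\sup_{K\times\partial D'_\beta}|k_j\,z_2^j\bar\zeta_2^j|\leq C_K e^{-\epsilon|j|/2}$ and the series converges. The principal delicacy is the final combinatorial step: the identities $|\im\zeta_1-\log|\zeta_2|^2|=\tfrac{\pi}{2}$ and $|\log|\zeta_2|^2|=\beta-\tfrac{\pi}{2}$ on the distinguished boundary sit exactly at the threshold where the growth of $|z_2^j\bar\zeta_2^j|$ would compensate the decay of $|k_j|$ only marginally, so one has to check each of the four boundary components $E_\ell$ and both signs of $j$ to confirm that the strictly smaller $\epsilon$-gap provided by the compactness of $K$ does convert this borderline behavior into genuine exponential decay.
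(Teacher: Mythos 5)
Your proposal is correct and follows essentially the same route as the paper: bound $|k_j(z_1,\zeta_1)|$ by replacing each $\ch$ factor with an exponential, control the resulting piecewise-linear-exponent integral by its corner values (the paper does this by explicitly splitting into the three pieces $(-\infty,j/2)$, $(j/2,0)$, $(0,\infty)$), then absorb the factor $|z_2^j\bar\zeta_2^j|$ and use the $\epsilon$-gap from the compactness of $K$ to get exponential decay in $|j|$. The only cosmetic difference is that you treat all four boundary components and both signs of $j$ uniformly via the parameter $\tau$, where the paper works out $E_1$ with $j<0$ and declares the rest analogous; the possible polynomial factor $(1+|j|)$ from a degenerate middle slope is harmless in both arguments.
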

\begin{proof}
We prove the proposition supposing that $(\zeta_1,\zeta_2)$ is in the component $E_1=\{(z_1,z_2):\im z_1=\beta, \log|z_2|^2=\beta-\pih\}$ of $\p D'_\beta$. The general case will follow analogously. In order to estimate the size of $k_j$, suppose for the moment that $j<0$. Then,
\begin{align*}
  |k_j(z_1,\zeta_1)|=|k_j(z_1,x+i\beta)|&\leq \int_{\dsR}\frac{e^{-[\im z_1+\beta]\xi}}{\ch[\pi\xi]\ch[(2\beta-\pi)(\xi-\frac{j}{2})]}\ d\xi\\
  &= \bigg(
  \int_{-\infty}^{\frac{j}{2}}+\int_{\frac{j}{2}}^{0}+\int_{0}^{+\infty}
  \bigg) 
\frac{e^{-[\im z_1+\beta]\xi}}{\ch[\pi\xi]\ch[(2\beta-\pi)(\xi-\frac{j}{2})]}\ d\xi.
\end{align*}
It follows that
\begin{align*}
  &\int_{-\infty}^{\frac{j}{2}}\frac{e^{-[\im
      z_1+\beta]\xi}d\xi}{\ch[\pi\xi]\ch[(2\beta-\pi)(\xi-\frac{j}{2})]} 
\approx \int_{-\infty}^{\frac{j}{2}}\frac{e^{-[\im z_1+\beta]\xi} d\xi}{e^{-\pi\xi}e^{-(2\beta-\pi)(\xi-\frac{j}{2})}}\approx\frac{e^{-j(\beta-\pih)}e^{\frac{j}{2}(\beta-\im z_1)}}{\beta-\im z_1};\\
  &\int_{\frac{j}{2}}^{0}\!\frac{e^{-[\im z_1+\beta]\xi}d\xi}{\ch[\pi\xi]\ch[(2\beta-\pi)(\xi-\frac{j}{2})]} 
\!\approx\! \int_{0}^{\frac{j}{2}}\!\frac{e^{-[\im z_1+\beta]\xi}d\xi}{e^{-\pi\xi}e^{(2\beta-\pi)(\xi-\frac{j}{2})}}\!\approx\! e^{j(\beta-\pih)}\frac{e^{-\frac{j}{2}[\im z_1+3\beta-2\pi]}-1}{\im z_1+3\beta-2\pi};\\
&\int_0^{+\infty}\frac{e^{-[\im z_1+\beta]\xi}d\xi}{\ch[\pi\xi]\ch[(2\beta-\pi)(\xi-\frac{j}{2})]}
\approx 
\int_0^{+\infty}\frac{e^{-[\im z_1+\beta]\xi}}{e^{\pi\xi}e^{(2\beta-\pi)(\xi-\frac{j}{2})}}\ d\xi\approx \frac{e^{j(\beta-\pih)}}{\im z_1+3\beta}.
\end{align*}
Notice that all these estimates do not depend on $\re\zeta_1$ and the term $\frac{e^{-\frac{j}{2}[\im z_1+3\beta-2\pi]}-1}{\im z_1+3\beta-2\pi}$ is not singular when $\im z_1+3\beta-2\pi\to 0$.
Finally, 
\begin{align*}
  &\sum_{j<0}|z_2|^je^{\frac{j}{2}(\beta-\pih)}|k_j(z_1,x+i\beta)|\\
  &\lesssim\!\sum_{j<0}\!\bigg[\frac{e^{\frac{j}{2}[\log|z_2|^2\!+\!\pih-\im z_1]}}{\beta-\im z_1}\!\!+\!\frac{e^{\frac{j}{2}[\log|z_2|^2-\im z_1+\pih]}-e^{\frac{j}{2}[\log|z_2|^2+3\beta-\frac{3\pi}{2}]}}{\im z_1+3\beta-2\pi}+\frac{e^{\frac{j}{2}[3\beta-\frac{3}{2}\pi+\log|z_2|^2]}}{\im z_1+3\beta}\bigg]
\end{align*}
and it is immediate to see that we get a uniform bound for $(z_1,z_2)\in K$. Analogous computations prove the estimate for the sum over positive $j$'s.
\end{proof}

We prove now that the integration against the kernel $K_{D'_\beta}$ not only reproduces function in $H^2(D'_\beta)$, but actually produces functions in $H^2(D'_\beta)$. 
\bprop\label{SFH2}
Let $\varphi$ be a function in $L^2(\partial D'_\beta)$. Then, the function 
$$
S\varphi(z_1,z_2):=\big<\varphi,K_{D'_\beta}[(\cdot,\cdot),(z_1,z_2)]\big>_{L^2(\partial D'_\beta)}
$$
is in $H^2(D'_\beta)$. Moreover, 
$$
\|S\varphi\|_{H^2(D'_\beta)}\leq \|\varphi\|_{L^2(\partial D'_\beta)}.
$$
\eprop
\begin{proof}
It is sufficient to prove the theorem for a function in $L^2(\p D'_\beta)$ of the form $\varphi=(\varphi_1, 0, 0, 0)$. The results for a general function $\varphi$ will follow by linearity. Therefore, by Plancherel's theorem,
\begin{align*}
\|\varphi\|^2_{L^2(\partial D'_\beta)}=\frac{1}{2\pi}\sum_{j\in\dsZ}\int_{\dsR}|\clF_{\dsR}\varphi_1(\xi,\widehat{j})|^2\ d\xi.
\end{align*}
The holomorphicity of $S\varphi$ is obtained using Proposition \ref{UniformConvergenceKernel} and Morera's theorem. It remains to prove that $S\varphi$ satisfies the $H^2$ growth condition. By \eqref{KernelSum} we obtain that
\begin{align*}
S\varphi(u+iv,re^{2\pi i\gamma})&=\int_{\dsR}\int_0^1\varphi_1(x,\theta)\sum\limits_{j\in\dsZ}k_{j}(u+iv,x+i\beta)r^j e^{2\pi ij\gamma}e^{\frac{j}{2}(\beta-\pih)}e^{-2\pi ij\theta}\ d\theta dx\\
&=\frac{1}{4}\sum_{j\in\dsZ} r^{j}e^{\frac{j}{2}(\beta-\pih)}e^{2 \pi
  ij\gamma}\clF^{-1}_{\dsR}\left[\frac{e^{-(v+\beta)(\cdot)}\clF_{\dsR}\varphi_1(\cdot,\hat{j}\,)}{\ch[\pi\cdot]\ch[(2\beta-\pi)(\cdot-\frac{j}{2})]}\right]\!(u). 
\end{align*} 
Hence,
\begin{align}\nonumber
  \int\limits_{\dsR}\!\int\limits_0^1\big|S\varphi[u+i(s+t), e^{\frac{s}{2}}e^{2\pi
    i\gamma}]\big|^2 d\gamma du&\!=\!\sum_{j\in\dsZ}\int_{\dsR}\Big|\frac{e^{-(s+\beta-\pih)(\xi-\frac{j}{2})}e^{-(\pih+t)\xi}\clF_{\dsR}\varphi_1(\xi,\hat{j}\,)}{8\pi\ch[\pi\xi]\ch[(2\beta-\pi)(\xi-\frac{j}{2})]}\Big|^2 d\xi\\ \label{L2BoundsWormStrip}
  &\leq\frac{1}{8\pi}\sum_{j\in\dsZ}\int_\dsR\Big|\clF_\dsR \varphi_1(\xi,\hat{j}\,)\Big|^2\ d\xi.
\end{align}
By analogous computations, we can estimate the other three terms of the $H^2$ growth condition \eqref{WormGrowth} and conclude the result by taking the supremum over $(t,s)\in[0,\pih)\times[0,\beta-\pih)$.
\end{proof}
\begin{rmk}
We report for completeness the explicit expression of $S\varphi$ given a general initial data $\varphi=(\varphi_1, \varphi_2, \varphi_3, \varphi_4)$ in $L^2(\partial D'_\beta)$. The formula is obtained combining \eqref{ScalarProduct} and \eqref{KernelSum}. Let $(u+iv, r^{2\pi i\gamma})$ in $D'_\beta$, then
\begin{align}\nonumber
  S\varphi(u+iv, re^{2\pi i\gamma})&=\left<\varphi, K_{D'_\beta}[(\cdot,\cdot),(u+iv, re^{2\pi i\gamma})]\right>_{L^2(\p D'_\beta)}\\ \nonumber
  &=\frac{1}{4}\sum_{j\in\dsZ}
  r^{j}e^{\frac{j}{2}(\beta-\pih)}e^{2 \pi
    ij\gamma}\clF^{-1}_{\dsR}\left[\frac{e^{-(v+\beta)(\cdot)}\clF_{\dsR}\varphi_1(\cdot,\hat{j}\,)}{\ch[\pi\cdot]\ch[(2\beta-\pi)(\cdot-\frac{j}{2})]}\right]
  (u)\\ \nonumber
  &\ +  \frac{1}{4}\sum_{j\in\dsZ}r^j e^{\frac{j}{2}(\beta-\pih)}e^{2\pi ij\gamma}\clF_\dsR^{-1}\left[\frac{e^{-(v+\beta-\pi)(\cdot)}\clF_{\dsR}\varphi_2(\cdot,\hat{j}\,)}{\ch[\pi\cdot]\ch[(2\beta-\pi)(\cdot-\frac{j}{2})]}\right](u)\\ \nonumber
  &\ + \
  \frac{1}{4}\sum_{j\in\dsZ}r^je^{-\frac{j}{2}(\beta-\pih)}e^{2\pi
    ij\gamma}\clF_{\dsR}^{-1}\left[\frac{e^{-(v-\beta)(\cdot)}\clF_{\dsR}\varphi_3(\cdot,\hat{j}\,)}{\ch[\pi\cdot]\ch[(2\beta-\pi)(\cdot-\frac{j}{2})]}\right]
  (u)\\  \label{SFGeneral}
  &\ + 
\frac{1}{4}\sum_{j\in\dsZ}r^{j}e^{-\frac{j}{2}(\beta-\pih)}e^{2\pi ij\gamma}\clF^{-1}_{\dsR}\left[\frac{e^{-(v-\beta+\pi)(\cdot)}\clF_{\dsR}\varphi_4(\cdot,\hat{j}\,)}{\ch[\pi\cdot]\ch[(2\beta-\pi)(\cdot-\frac{j}{2})]}\right](u).
\end{align}
\end{rmk}
Since $S\varphi$ is a function in $H^2(D'_\beta)$, we know it admits a boundary value function $\widetilde{S\varphi}=((\widetilde{S\varphi})_1, (\widetilde{S\varphi})_2, (\widetilde{S\varphi})_3, (\widetilde{S\varphi})_4)$ in $L^2(\p D'_\beta)$. For the reader's convenience, we adopt the notation $(\widetilde{S\varphi})_\ell:=\widetilde{S\varphi}_\ell$, $\ell=1,\ldots,4$.

 We obtain an explicit formula for $\widetilde{S\varphi}_1$, that is the boundary value function of $S\varphi$ on the component $E_1$ of $\p D'_\beta$; the formulas for the other components $\widetilde{S\varphi}_\ell, \ell=2,3,4,$ can be analogously deduced from \eqref{BoundaryComponents} and \eqref{SFGeneral}. 
 
 Given $\varphi$ in $L^2(\partial D'_\beta)$, we define a function $\psi=(\psi_1, \psi_2, \psi_3, \psi_4)$ in $L^2(\p D'_\beta)$ where we set
\begin{align} \label{GeneralBoundaryValuesWormStrip} \nonumber
  \psi_1(x+i\beta, e^{\frac{1}{2}(\beta-\pih)}e^{2\pi i\gamma}):=\frac{1}{4}&\sum_{j\in\dsZ}e^{2\pi ij\gamma}\clF_\dsR^{-1}\left[\frac{e^{-(2\beta-\pi)(\cdot-\frac{j}{2})}e^{-\pi(\cdot)}\clF_{\dsR}\varphi_1(\cdot,\hat{j}\,)}{\ch[\pi\cdot]\ch[(2\beta-\pi)(\cdot-\frac{j}{2})]}\right](x)\\ \nonumber
  &+\frac{1}{4}\sum_{j\in\dsZ}e^{2\pi ij\gamma}\clF_{\dsR}^{-1}\left[\frac{e^{-(2\beta-\pi)(\cdot-\frac{j}{2})}\clF_\dsR \varphi_2(\cdot,\hat{j}\,)}{\ch[\pi\cdot]\ch[(2\beta-\pi)(\cdot-\frac{j}{2})]}\right](x)\\ \nonumber
  &+\frac{1}{4}\sum_{j\in\dsZ}e^{2\pi ij\gamma}\clF_{\dsR}^{-1}\left[\frac{\clF_\dsR \varphi_3(\cdot,\hat{j}\,)}{\ch[\pi\cdot]\ch[(2\beta-\pi)(\cdot-\frac{j}{2})]}\right](x)\\
  &+\frac{1}{4}\sum_{j\in\dsZ}e^{2\pi ij\gamma}\clF_{\dsR}^{-1}\left[\frac{e^{-\pi(\cdot)}\clF_\dsR \varphi_4(\cdot,\hat{j}\,)}{\ch[\pi\cdot]\ch[(2\beta-\pi)(\cdot-\frac{j}{2})]}\right](x);
  \end{align}

Using the notation of Proposition \ref{Weak*Hp}, we have the following convergence result. \
\begin{prop}\label{L2NormBoundaryValuesWormStrip}
Let $\varphi$ be a function in $L^2(\partial D'_\beta)$.
Then,
\begin{align*}
  &\lim_{(t,s)\to(\pih,\beta-\pih)}\|[S\varphi]_{t,s}-\psi\|^2_{L^2(\p D'_\beta)}=\lim_{(t,s)\to(\pih,\beta-\pih)}\sum_{\ell=1}^4\|{[S\varphi}]_{\ell,t,s}-\psi_\ell\|_{L^2(E_\ell)}^2=0
\end{align*}ly
In particular, $\widetilde{S\varphi}=\psi$.
\end{prop}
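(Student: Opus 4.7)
The plan is to establish convergence component by component (the four pieces $\widetilde{S\varphi}_\ell$ are treated analogously), so I would focus on the component $E_1$ and show
$$
\lim_{(t,s)\to(\pih,\beta-\pih)} \| [S\varphi]_{1,t,s} - \psi_1 \|^2_{L^2(E_1)} = 0.
$$
First I would substitute into formula \eqref{SFGeneral} the parametrization of a copy of $E_1$ inside $D'_\beta$, namely $(u+iv, re^{2\pi i\gamma}) = (x + i(s+t), e^{s/2} e^{2\pi i \gamma})$, to obtain an explicit expression for $[S\varphi]_{1,t,s}$ as a sum of four series of inverse Fourier transforms, one for each $\varphi_\ell$. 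Each summand has the form $e^{2\pi i j \gamma} \clF^{-1}_\dsR [m^{(\ell)}_{t,s}(\cdot, j) \cdot \clF_\dsR \varphi_\ell(\cdot, \hat j)](x)$ for an explicit multiplier $m^{(\ell)}_{t,s}(\xi, j)$; comparing with \eqref{GeneralBoundaryValuesWormStrip}, the boundary object $\psi_1$ has precisely the same form with multipliers $m^{(\ell)}(\xi, j)$ obtained by formally setting $(t,s) = (\pih, \beta-\pih)$.

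Then I would apply Plancherel simultaneously in the $x$ variable (Fourier transform on $\dsR$) and in the $\gamma$ variable (Fourier series on $\dsT$) to rewrite
$$
\| [S\varphi]_{1,t,s} - \psi_1 \|^2_{L^2(E_1)} = \sum_{j\in\dsZ} \int_\dsR \Big| \sum_{\ell=1}^{4} \bigl( m^{(\ell)}_{t,s}(\xi, j) - m^{(\ell)}(\xi, j) \bigr) \clF_\dsR \varphi_\ell(\xi, \hat j) \Big|^2 d\xi.
$$
A direct inspection shows that for each $\ell$, $m^{(\ell)}_{t,s}(\xi, j) \to m^{(\ell)}(\xi, j)$ pointwise as $(t,s) \to (\pih, \beta-\pih)$, since the only $(t,s)$-dependence is through bounded exponential factors such as $e^{-(s+t+\beta)\xi}$ and $e^{sj/2}$, which converge to their boundary values, while the denominators $\ch[\pi \xi]\ch[(2\beta-\pi)(\xi - j/2)]$ are independent of $(t,s)$.

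The final step is a dominated convergence argument. The estimate carried out in the proof of Proposition \ref{SFH2}, see \eqref{L2BoundsWormStrip}, shows that each $|m^{(\ell)}_{t,s}(\xi, j)|$ is bounded, uniformly in $(t,s)\in[0,\pih)\times[0,\beta-\pih)$ and in $(\xi, j)$, by a constant; the same bound holds for $|m^{(\ell)}(\xi, j)|$. Consequently the integrand above is dominated by $C \sum_{\ell=1}^{4} |\clF_\dsR \varphi_\ell(\xi, \hat j)|^2$, which by Plancherel is summable--integrable in $(\xi, j)$ since $\varphi \in L^2(\p D'_\beta)$. Applying Lebesgue's dominated convergence theorem (with counting measure on $\dsZ$ and Lebesgue measure on $\dsR$) yields the desired limit, hence $\widetilde{S\varphi}_1 = \psi_1$; the analogous treatment of the other three components finishes the proof.

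The main obstacle is just bookkeeping: one has to identify each interior multiplier with the correct boundary multiplier through the substitutions dictated by the geometry of the four components $E_\ell$ and the definition of $F_{\ell,t,s}$. Once this is done, Plancherel plus dominated convergence runs without any subtlety, since no singular-integral argument is needed here---unlike in Theorem \ref{SzegoStriscia}, the $L^2$ setting allows a purely spectral proof.
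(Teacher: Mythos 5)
Your proposal is correct and follows essentially the same route as the paper: both reduce the claim via Plancherel (in $x$ and in $\gamma$) to a sum over $j$ of integrals of $|m_{t,s}(\xi,j)-m(\xi,j)|^2|\clF_\dsR\varphi_\ell(\xi,\hat j)|^2$, observe that the multiplier quotients are bounded uniformly in $(t,s,\xi,j)$ and converge pointwise, and conclude by dominated convergence. The only cosmetic difference is that the paper carries this out for the simplified datum $\varphi=(\varphi_1,0,0,0)$ and invokes linearity, whereas you keep all four multipliers explicit.
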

\begin{proof}
We only prove explicitly that 
$$
\|[S\varphi]_{1,t,s}-\psi_1\|^2_{L^2(E_1)}\to 0
$$  
for a simpler function $\varphi$ of the form $\varphi=(\varphi_1, 0, 0, 0)$ in $L^2(\p D'_\beta)$.  The complete proof for a general function $\varphi$ is obtained with similar arguments. From \eqref{SFGeneral} and \eqref{GeneralBoundaryValuesWormStrip}, we obtain
\begin{align*}
  \|[S\varphi]_{1,t,s}-\psi_1&\|^2_{L^2(E_1)}=\|S\varphi(\cdot+i(s+t),e^\frac{s}{2}e^{2\pi i(\cdot)})-\psi_1(\cdot+i\beta,e^{\frac{1}{2}(\beta-\pih)}e^{2\pi i(\cdot)})\|^2_{L^2(\dsR\times\dsT)}\\
  &=\frac{1}{8\pi}\sum_{j\in\dsZ}\int_{\dsR}\Big|\clF_{\dsR}\varphi_1(\xi,\widehat{j})\frac{e^{-(s+\beta-\pih)(\xi-\frac{j}{2})}e^{-(\pih+t)\xi}-e^{-(2\beta-\pi)\xi}e^{-\pi\xi}}{\ch[\pi\xi]\ch[(2\beta-\pi)(\xi-\frac{j}{2})]}\Big|^2\ d\xi\\
  &\leq \frac{1}{8\pi}\sum_{j\in\dsZ}\int_{\dsR}\Big|\clF_{\dsR}\varphi_1(\xi,\widehat{j})\Big|^2\ d\xi\\
  &<\infty.
\end{align*}
By Lebesgue's dominated convergence theorem, we can conclude. 
\end{proof}
Let us define
$$
H^2(\partial D'_\beta):=\{\varphi=(\varphi_1, \varphi_2, \varphi_3, \varphi_4)\in L^2(\partial D'_\beta): \exists F\in H^2(D'_\beta)\ s.t.\ \varphi=\widetilde{F}   \}.
$$

We conclude the section with a Paley--Wiener type result. 
\begin{thm}\label{PWWorm}\textbf{(Paley--Wiener Theorem for $D'_\beta$)}
Let $\varphi=(\varphi_1, \varphi_2, \varphi_3, \varphi_4)$ be a function in $L^2(\partial D'_\beta)$. Then, $\varphi$ is in $H^2(\partial D'_\beta)$ if and only if
there exists a sequence of functions $\{g_j\}_{j\in\dsZ}$ such that
$$
\sum_{j\in\dsZ}\int_\dsR |\widehat{g_j}(\xi)|^2\ch[\pi\xi]\ch[(2\beta-\pi)(\xi-\frac{j}{2})]\ d\xi<\infty
$$
and 
\begin{align*}
  &\varphi_{1}(x+i\beta,e^{\frac{1}{2}(\beta-\pih)}e^{2\pi i\gamma})=\sum_{j\in\dsZ}\varphi_{1,j}(x+i\beta)e^{\frac{j}{2}(\beta-\pih)}e^{2\pi ij\gamma};\\
  &\varphi_{2}[x+i(\beta-\pi),e^{\frac{1}{2}(\beta-\pih)}e^{2\pi i\gamma}]=\sum_{j\in\dsZ}\varphi_{2,j}[x+i(\beta-\pih)]e^{\frac{j}{2}(\beta-\pih)}e^{2\pi i\gamma};\\
  &\varphi_{3}(x-i\beta,e^{-\frac{1}{2}(\beta-\pih)}e^{2\pi i\gamma})=\sum_{j\in\dsZ}\varphi_{3,j}(x-i\beta)e^{-\frac{j}{2}(\beta-\pih)}e^{2\pi ij\gamma};\\
   &\varphi_{4}[x-i(\beta-\pi),e^{-\frac{1}{2}(\beta-\pih)}e^{2\pi i\gamma}]=\sum_{j\in\dsZ}\varphi_{4,j}[x-i(\beta-\pih)]e^{-\frac{j}{2}(\beta-\pih)}e^{2\pi i\gamma},
\end{align*}
where, for every $j\in\dsZ$,
\begin{align*}
  &\varphi_{1,j}[x+i\beta]=\clF^{-1}_{\dsR}\Big[e^{-\beta(\cdot)}g_j(\cdot)\Big](x);\qquad \varphi_{2,j}[x+i(\beta-\pi)]=\clF^{-1}_{\dsR}\Big[e^{-(\beta-\pi)(\cdot)}g_j(\cdot)\Big](x);\\
  &\varphi_{3,j}(x+i\beta)=\clF^{-1}_{\dsR}\Big[e^{\beta(\cdot)}g_j(\cdot)\Big](x);\qquad \varphi_{4,j}[x-i(\beta-\pi)]=\clF^{-1}_{\dsR}\Big[e^{(\beta-\pi)(\cdot)}g_j(\cdot)\Big](x).
\end{align*}
Moreover,
\begin{equation}
S\varphi(u+iv, r e^{2\pi ij\gamma})=\sum_{j\in\dsZ}r^j e^{2\pi ij\gamma}\clF^{-}_{\dsR}\big[e^{-v(\cdot)}g_j\big](u).
\end{equation}
\end{thm}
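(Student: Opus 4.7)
\medskip

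\noindent\textbf{Proof plan.} The strategy is to reduce the statement to the Paley--Wiener theorem on a symmetric strip (Theorem \ref{PWstrip}) via the orthogonal decomposition $H^2(D'_\beta)=\bigoplus_{j\in\dsZ}\clH^2_j$ established in Theorem \ref{H2description}, together with the isometry $\clH^2_j\simeq H^2_j(S_\beta)$ recorded in Proposition \ref{isometry} and Remark \ref{RemarkStrip}. The candidates for the functions $g_j$ will essentially be the Fourier transforms $\widehat{f_{j,0}}$ of the real-line restrictions of the strip functions $f_j$ appearing in the expansion $F(z_1,z_2)=\sum_j f_j(z_1)z_2^j$ of Proposition \ref{Barrett}; the rest is bookkeeping over the four components $E_1,\ldots,E_4$ of $\p D'_\beta$.

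For the ``only if'' direction, I would start with $\varphi=\widetilde F$ for some $F\in H^2(D'_\beta)$ and use Proposition \ref{Barrett} to write $F(z_1,z_2)=\sum_{j\in\dsZ}f_j(z_1)z_2^j$ with each $f_j\in H^2(S_\beta)$. Since $f_j\in H^2(S_\beta)$, Theorem \ref{PWstrip} provides a function $g_j:=\widehat{f_{j,0}}$ on $\dsR$ with
\[
f_j(x+iy)=\clF_\dsR^{-1}\bigl[e^{-y(\cdot)}g_j\bigr](x)\qquad\text{for every }x+iy\in S_\beta.
\]
Specializing $y$ to $\pm\beta$ and $\pm(\beta-\pi)$ and using Proposition \ref{BoundaryValuesH2WormStrip} to identify $\widetilde F$ componentwise with $\sum_j\widetilde f_j(\zeta_1)\zeta_2^j$ on each $E_\ell$, the four formulas for $\varphi_{\ell,j}$ drop out directly once one accounts for the fixed modulus $|\zeta_2|=e^{\pm(\beta-\pih)/2}$ on the components $E_\ell$, which produces the factors $e^{\pm\frac{j}{2}(\beta-\pih)}$. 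The summability of $\sum_j\int|g_j|^2\ch(\pi\xi)\ch[(2\beta-\pi)(\xi-\tfrac{j}{2})]\,d\xi$ then follows from Remark \ref{RemarkStrip} combined with the orthogonality identity $\|F\|^2_{H^2(D'_\beta)}=\sum_j\|F_j\|^2_{H^2(D'_\beta)}$ of Proposition \ref{orthogonality}.

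For the ``if'' direction, I would start with a sequence $\{g_j\}$ satisfying the summability condition and use Theorem \ref{PWstrip} to produce, for each $j$, a function $f_j$ holomorphic in $S_\beta$ whose real-line restriction has Fourier transform $g_j$; the hypothesis on $\{g_j\}$ is exactly the statement that $\sum_j\|f_j\|^2_{H^2_j(S_\beta)}<\infty$ in the weighted norm of Remark \ref{RemarkStrip}. Setting $F_j(z_1,z_2):=f_j(z_1)z_2^j$ and $F:=\sum_j F_j$, Proposition \ref{orthogonality} (run in reverse) yields $F\in H^2(D'_\beta)$ with $\|F\|^2_{H^2}=\sum_j\|F_j\|^2_{H^2}$, and the same boundary value computation as above shows that $\widetilde F$ is exactly the prescribed $\varphi$, so $\varphi\in H^2(\p D'_\beta)$. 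Finally, the displayed formula for $S\varphi$ is obtained by observing that, as $\varphi\in H^2(\p D'_\beta)$, the Szeg\H{o} projection acts as the identity on $\varphi$, so $S\varphi=F$; substituting $f_j(z_1)=\clF_\dsR^{-1}[e^{-\im z_1(\cdot)}g_j](\re z_1)$ in $F(z_1,z_2)=\sum_j f_j(z_1)z_2^j$ and writing $z_2=re^{2\pi i\gamma}$ gives the stated identity. One may also verify this by specializing the general formula \eqref{SFGeneral} and collapsing the four sums using the Paley--Wiener relations between the $\varphi_{\ell,j}$ and $g_j$.

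The bulk of the work, and the only step with any real content beyond invoking previous results, is the bookkeeping in the forward direction: one must check on each of the four components $E_\ell$ that the restriction of $\sum_j f_j(z_1)z_2^j$ to copies of the distinguished boundary converges in $L^2(\p D'_\beta)$ to a function with the stated Fourier series expansion, and that the $e^{\pm\frac{j}{2}(\beta-\pih)}$ factors are tracked correctly; once this is in place everything else reduces to the one-dimensional Paley--Wiener theorem applied uniformly in $j$.
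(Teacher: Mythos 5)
Your proposal is correct and follows essentially the same route as the paper: both directions reduce to the Paley--Wiener theorem for the strip (Theorem \ref{PWstrip}) via the decomposition of Theorem \ref{H2description} and the expansion \eqref{PointiwiseBar}. The only cosmetic difference is in the converse, where the paper produces the required $H^2(D'_\beta)$ function as $S\varphi$ and reads off its boundary values from \eqref{GeneralBoundaryValuesWormStrip}, whereas you assemble $F=\sum_j f_j(z_1)z_2^j$ directly from the $g_j$'s and run Proposition \ref{orthogonality} in reverse; both yield the same conclusion.
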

\begin{proof}
Suppose that $\varphi$ belongs to $H^2(\partial D'_\beta)$. Then, the conclusion follows from Theorem \ref{H2description}, \eqref{PointiwiseBar} and the Paley--Wiener Theorem for a strip. Conversely, let $\{g_j\}$ be a sequence which defines $\varphi=(\varphi_1,\varphi_2,\varphi_3,\varphi_4)$ as in the hypothesis. It follows that $S\varphi$ belongs to $H^2(D'_\beta)$ and the formula in Definition \ref{GeneralBoundaryValuesWormStrip} guarantees that $\widetilde{S\varphi}_1=\varphi_1$. Analogously it can be proved for $k=2,3,4$. The proof is complete. 
\end{proof}

\section{Hardy Spaces on $D'_\beta$: the $L^p$-theory}\label{Lp}

In this section we extend the results we have seen so far to the case $p\in(1,\infty)$. In detail, we prove Theorems \ref{t:Lp} and \ref{t:Sobolev}, we prove that for every $p\in(1,\infty)$ the space $H^p(D'_\beta)$ admits a decomposition analogous to \eqref{DirectSumH2} (Proposition \ref{SumNormConvergence}) and we prove a Fatou-type Theorem, that is we prove that an appropriate restriction of a function $F$ in $H^p(D'_\beta)$, $p\in(1,\infty)$, converges to its boundary value function $\widetilde{F}$ pointwise almost everywhere (Theorem \ref{SimplyPC}).

For $y$ and $s$ such that $(x+iy, e^\frac{s}{2} e^{2\pi i\gamma})\in D'_\beta$, that is $|s|\in (0,\beta-\pih)$ and $|y-s|\in (0,\pih)$, we consider a family of operators $\{S_{y,s}\}_{y,s}$, where 
 \begin{equation}\label{Sys}
 S_{y,s}\varphi(x,\gamma):= S\varphi (x+iy, e^{\frac{s}{2}}e^{2\pi i\gamma})
 \end{equation}
 and the right-hand side of \eqref{Sys} is defined by \eqref{SFGeneral}.
 
We observe that the operator $\varphi\mapsto\widetilde{S\varphi}$ defined in Proposition \ref{L2NormBoundaryValuesWormStrip} and the operators $\varphi\mapsto S_{y,s}\varphi$ are well-defined for functions $\varphi=(\varphi_1, \varphi_2, \varphi_3, \varphi_4)$ where each $\varphi_\ell, \ell=1,\ldots,4$ is of the form
\bequa\label{FiniteSum}
\varphi_\ell(x,\gamma)=\sum_{ |j|<N}\varphi_\ell(x,j)e^{2\pi ij\gamma}
\eequa
with $\varphi_\ell(\cdot,j)$ in $C^{\infty}_0(\dsR)$ for every $j$ and the sum is over a finite number of $j$'s. Moreover, the set of functions $\varphi$ of such a form is dense in $L^p(\p D'_\beta)$.

For future reference, we point out that for a function $\varphi$ in $L^p(\partial D'_\beta)$ of the form $\varphi=(\varphi_1, 0, 0, 0)$, formulas  $\eqref{SFGeneral}$ and \eqref{GeneralBoundaryValuesWormStrip} reduce to
\begin{align}
\label{SimplySFGeneral}
&S_{y,s}\varphi(x,\gamma)=\sum_{j\in\dsZ}e^{2\pi ij\gamma}\clF^{-1}_{\dsR}\left[\frac{e^{-(\beta-\pih+s)(\cdot-\frac{j}{2})}e^{-(\pih-s+y)(\cdot)}\clF_{\dsR}\varphi_1(\cdot,\hat{j}\,)}{4\ch[\pi\cdot]\ch[(2\beta-\pi)(\cdot-\frac{j}{2})]}\right](x);\\
\label{SimplySF1}
  &\widetilde{S\varphi}_1(x+i\beta,e^{\frac{1}{2}(\beta-\pih)}e^{2\pi i\gamma})=\sum_{j\in\dsZ}e^{2\pi ij\gamma}\clF^{-1}_{\dsR}\left[\frac{e^{-(2\beta-\pi)(\cdot-\frac{j}{2})}e^{-\pi(\cdot)}\clF_{\dsR}\varphi_1(\cdot,\hat{j}\,)}{4\ch[\pi\cdot]\ch[(2\beta-\pi)(\cdot-\frac{j}{2})]}\right](x).
\end{align} 
To obtain \eqref{SimplySF1} we used also Proposition \ref{L2NormBoundaryValuesWormStrip}.
\begin{prop}\label{TransferredLpBounds}
Let $\varphi=(\varphi_1, \varphi_2, \varphi_3, \varphi_4)$ be a function in $L^p(\partial D'_\beta)$. Then, for every $p\in(1,\infty)$, 
$$
\|S_{y,s}\varphi\|_{L^p(\dsR\times\dsT)}\leq C_p \|\varphi\|_{L^p(\partial D'_\beta)},
$$
where the constant $C_p$ does not depend on $y$ and $s$.
\end{prop}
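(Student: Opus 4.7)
The plan is to recognize $S_{y,s}$ as a Fourier multiplier operator on $L^p(\dsR \times \dsT)$ and apply the Mih\-lin--H\"{o}rmander multiplier theorem, with symbol estimates uniform in $(y, s)$. By linearity of $S$ displayed in \eqref{SFGeneral}, it is enough to prove the bound separately for inputs of the form $\varphi = (\varphi_\ell, 0, 0, 0)$ with the single non-zero component in each of the four positions; the four cases are structurally identical modulo symmetric modifications of the symbol. For $\ell = 1$, formula \eqref{SimplySFGeneral} shows that $S_{y,s}$ is the Fourier multiplier on $\dsR \times \dsT$ with symbol
$$
m_{y,s}(\xi, j) = \frac{1}{4}\cdot\frac{e^{-a_1 \xi}}{\ch[\pi \xi]}\cdot\frac{e^{-a_2(\xi - j/2)}}{\ch[(2\beta - \pi)(\xi - j/2)]},
$$
where $a_1 := \pih - s + y$ and $a_2 := \beta - \pih + s$. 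The admissibility conditions $|y - s| < \pih$ and $|s| < \beta - \pih$ translate precisely to $a_1 \in (0, \pi)$ and $a_2 \in (0, 2\beta - \pi)$, so every exponential in the numerator of $m_{y,s}$ is dominated by the corresponding $\ch$ in the denominator, which is the source of uniform boundedness.

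Next, I would extend $m_{y,s}$ from $\dsR \times \dsZ$ to a $\mathcal{C}^\infty$ function on $\dsR^2$ by letting $j$ vary continuously, and verify the Mih\-lin--H\"{o}rmander condition
$$
|\xi|^\alpha |j|^\beta \left|\partial_\xi^\alpha \partial_j^\beta m_{y,s}(\xi, j)\right| \leq C_{\alpha\beta}, \qquad \alpha + \beta \leq N,
$$
with $C_{\alpha\beta}$ independent of $(y, s)$ in the admissible region. The pointwise bound is immediate from the elementary estimate $|e^{-a\xi}/\ch[c \xi]| \leq 2$, valid for all $\xi \in \dsR$ and $a \in [0, c]$. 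For the derivative estimates, the key computation is
$$
\partial_\xi\Big(\frac{e^{-a \xi}}{\ch[c \xi]}\Big) = -\bigl(a + c \tanh[c \xi]\bigr) \frac{e^{-a\xi}}{\ch[c\xi]};
$$
scaled by $|\xi|$, this reduces to bounded multiples of $u \, e^{-u}$ (e.g.\ with $u = (c - a)|\xi|$ in the half-space where the exponential decay rate of the factor is $c - a$), and $u \mapsto u e^{-u}$ is uniformly controlled on $\dsR_{\geq 0}$ independently of $a$. Analogous computations handle $\partial_j$ and higher-order mixed derivatives, since every derivative of $m_{y,s}$ is a polynomial expression in the bounded functions $\tanh$ applied to scaled arguments, multiplied by $m_{y,s}$ itself.

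Finally, I would invoke the Mih\-lin multiplier theorem on $\dsR \times \dsT$, which follows from the classical Mih\-lin--H\"{o}rmander theorem on $\dsR^2$ via the standard transference argument (cf.\ \cite[Chapter 5]{MR2445437}), to conclude
$$
\|S_{y,s}\varphi\|_{L^p(\dsR \times \dsT)} \leq C_p \|\varphi_1\|_{L^p(\dsR \times \dsT)}
$$
uniformly in $(y, s)$. Summing the four contributions yields the proposition. The principal obstacle is ensuring that the derivative estimates in the previous paragraph are genuinely uniform as $(y, s)$ approaches the boundary of its admissible region, i.e.\ as $a_1$ approaches $0$ or $\pi$ and $a_2$ approaches $0$ or $2\beta - \pi$; the $u \, e^{-u}$ scaling is precisely what absorbs the otherwise degenerating factor $(c - a)^{-1}$ coming from differentiation and keeps the Mih\-lin constants finite up to the boundary.
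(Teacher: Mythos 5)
Your reduction to $\varphi=(\varphi_1,0,0,0)$ and your identification of the symbol
$m_{y,s}(\xi,j)=\tfrac{1}{4}A(\xi)B(\xi-\tfrac{j}{2})$ with $A(\xi)=e^{-a_1\xi}/\ch[\pi\xi]$, $B(u)=e^{-a_2u}/\ch[(2\beta-\pi)u]$, $a_1=\pih-s+y\in(0,\pi)$, $a_2=\beta-\pih+s\in(0,2\beta-\pi)$, are correct, and the pointwise bound $|m_{y,s}|\leq C$ is indeed uniform. The gap is in the derivative estimates: the joint Mihlin--H\"ormander (equivalently, Marcinkiewicz-type) condition does \emph{not} hold with constants uniform in $(y,s)$. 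Take $a_1=\pi-\epsilon$ and move along the ray $j=2\xi$, $\xi\to-\infty$, where the argument of $B$ is frozen at $u=0$. There $B(0)=1$, $B'(0)=-a_2$, while $A(\xi)\approx 2e^{-(\pi-a_1)|\xi|}=2e^{-\epsilon|\xi|}$ and $A'(\xi)=-(a_1+\pi\tanh[\pi\xi])A(\xi)\approx \epsilon A(\xi)$. At $\xi=-1/\epsilon$ this gives $|\partial_\xi m_{y,s}(\xi,2\xi)|\approx \tfrac{1}{2e}|\epsilon-a_2|\approx a_2/(2e)$, hence $|\xi|\,|\partial_\xi m_{y,s}|\approx a_2/(2e\epsilon)\to\infty$ as $\epsilon\to 0$ (and $|(\xi,j)|\,|\partial_\xi m_{y,s}|$ blows up likewise, so the H\"ormander form fails too). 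Your $u\,e^{-u}$ mechanism cancels the degenerating rate only when the polynomial weight and the exponential decay are functions of the \emph{same} variable; here the weight $|\xi|$ multiplies $B'(\xi-\tfrac{j}{2})$, whose argument stays bounded along the diagonal, and the only available decay, that of $A$, has rate $\pi-a_1\to 0$ with no compensating prefactor. The same degeneration occurs as $a_2\to 2\beta-\pi$. So the argument as written does not produce a constant independent of $y$ and $s$, which is the whole content of the proposition.

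The paper sidesteps this by never forming a two-variable multiplier. It factors $S_{y,s}=\lambda'_{y,s}\circ\lambda_s$, where $\lambda'_{y,s}$ is the one-dimensional multiplier $A$ acting in $x$ for each fixed $\gamma$ (uniformly Mihlin by your own single-variable estimates), and $\lambda_s$ applies the shifted multiplier $B(\xi-\tfrac{j}{2})$ to the $j$th Fourier coefficient. The shift is then removed by the modulation identity $\clF^{-1}_\dsR[B(\cdot-\tfrac{j}{2})\clF_\dsR\varphi_1(\cdot,j)](x)=e^{ij x/2}\,\clF^{-1}_\dsR\big[B\,\clF_\dsR\big(e^{-ij(\cdot)/2}\varphi_1(\cdot,j)\big)\big](x)$ combined with the fact that $\gamma\mapsto\gamma+\tfrac{x}{4\pi}$ is measure preserving on $\dsT$ for each fixed $x$: this reduces $\lambda_s$ to the single $j$-independent one-dimensional multiplier $B(\xi)$ applied to a function whose $L^p(\dsR\times\dsT)$ norm equals $\|\varphi_1\|_{L^p}$. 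In effect, the conjugation by $e^{ijx/2}$ is the operator-level shear $(\xi,j)\mapsto(\xi,\xi-\tfrac{j}{2})$ that diagonalizes the symbol into $A(\xi)B(u)$, for which your uniform $u\,e^{-u}$ estimates do apply factor by factor. If you insert this step, the rest of your argument goes through.
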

\begin{proof}
By density and linearity it suffices to prove the theorem for a function $\varphi$ of the form $\varphi= (\varphi_1, 0, 0, 0)$ where $\varphi_1(x,\gamma)=\sum\limits_{j=-N}^N \varphi_1(x,j)e^{2\pi ij\gamma}$ and each function $\varphi_1(\cdot,j)$ is in $C^\infty_0(\dsR)$. Define
$$
m_s(\xi-\frac{j}{2})=\frac{e^{-(\beta-\pih+s)(\xi-\frac{j}{2})}}{4\ch[(2\beta-\pi)(\xi-\frac{j}{2}]}\qquad\text{and}\qquad m'_{y,s}(\xi)= \frac{e^{-(\pih-s+y)\xi}}{\ch[\pi\xi]}.
$$
Then,
\begin{align}\label{SysComposition}
  S_{y,s}\varphi(x,\gamma)=[\lambda'_{y,s}\circ\lambda_s]\varphi(x,\gamma),
\end{align}
where 
\begin{align}
\lambda_s \varphi(x,\gamma)
 \label{lambdas}
&=\sum_{j=-N}^N e^{2\pi ij\gamma }\clF^{-1}_\dsR\left[m_s(\cdot-\frac{j}{2})\clF_\dsR \varphi_1(\cdot,j)\right](x)
\end{align}
and
\begin{align}
\lambda'_{y,s}\varphi(x,\gamma)
\label{lambdays}
&=\clF^{-1}_\dsR\left[m'_{y,s}(\cdot) \clF_\dsR \varphi_1(\cdot,\gamma)\right](x).
\end{align}
In fact, it is immediate to see from \eqref{lambdas} that
$$
\clF_\dsR\left[\lambda_s \varphi(\cdot,\gamma)\right](\xi)=\sum_{j=-N}^N e^{2\pi ij\gamma} m_s(\xi-\frac{j}{2})\clF_\dsR\varphi_1(\xi,j),
$$
hence from \eqref{lambdays} we obtain \eqref{SysComposition}. 
We recall that $y$ and $s$ are such that $(x+iy,e^{\frac{s}{2}}e^{2\pi i\gamma})$ is in $D'_\beta$, thus $|s|\in(0,\beta-\pih)$ and $|y-s|\in(0,\pih)$. Then, by Mihlin's multipliers theorem, it is not hard to prove that $m'_{y,s}$ is a multiplier of $L^p(\dsR)$ for every $p\in(1,\infty)$ with norm independent of $y$ and $s$. Thus the operator $\lambda'_{y,s}$ extends to a bounded linear operator $L^p(\dsR\times\dsT)\to L^p(\dsR\times\dsT)$ for every $p\in(1,\infty)$. About $\lambda_s$, by elementary properties of the Fourier transform, we have
\begin{align*}
  \lambda_s \varphi(x,\gamma)&=\frac{1}{2\pi}\int_\dsR \sum_{j=-N}^N e^{2\pi ij(\gamma+\frac{x}{4\pi})}m_s(\xi)\clF_\dsR \varphi_1(\xi+\frac{j}{2},j) e^{ix\xi}\ d\xi\\
  &=\frac{1}{2\pi}\int_\dsR \sum_{j=-N}^N e^{2\pi i j(\gamma+\frac{x}{4\pi})}m_s(\xi) \clF_\dsR[e^{-i\frac{j}{2}(\cdot)}\varphi_1(\cdot,j)](\xi)e^{ix\xi}\ d\xi.
\end{align*}
Therefore, by a change of variables and the periodicity of the exponential function,
\begin{align*}
  \int_{\dsR\times\dsT}|\lambda_s &\varphi(x,\gamma)|^p dxd\gamma\\
  &=\int_0^1\int_\dsR \bigg|\frac{1}{2\pi}\int_\dsR m_s(\xi)\sum_{j=-N}^N e^{2\pi ij\gamma}\clF_\dsR[e^{-i\frac{j}{2}(\cdot)}\varphi_1(\cdot,j)](\xi)e^{ix\xi}\ d\xi \bigg|^p dxd\gamma\\
  &=\int_0^1\int_\dsR \bigg|\frac{1}{2\pi}\int_\dsR m_s(\xi)\clF_\dsR\bigg[\sum_{j=-N}^N e^{-i\frac{j}{2}(\cdot)}\varphi_1(\cdot,j)e^{2\pi ij\gamma}\bigg](\xi)e^{ix\xi}\ d\xi\bigg|^p dxd\gamma.
\end{align*}
Again, by Mihlin's multipliers theorem, we obtain that $m_s$ is a multiplier of $L^p(\dsR)$ for every $p\in(1,\infty)$ with norm independent of $s$. Therefore, if we prove that the function $\sum\limits_{j=-N}^N e^{-i\frac{j}{2}t} \varphi_1(t,j)e^{2\pi ij\gamma}$ is in $L^p(\dsR\times\dsT)$, we will obtain the $L^p$ boundedness of the operator $\lambda_s$. By a change of variables and the periodicity of the exponential function, we have
\begin{align*}
  \int_\dsR\int_0^1 \!\bigg|\sum\limits_{j=-N}^N e^{-i\frac{j}{2}t} \varphi_1(t,j)e^{2\pi ij\gamma}\bigg|^p d\gamma dt&\!=\!\int_\dsR\int_0^1 \!\bigg|\!\sum\limits_{j=-N}^N \varphi_1(t,j)e^{2\pi ij\gamma}\bigg|^p d\gamma dt\!=\!\|\varphi\|^{p}_{L^p(\p D'_\beta)}.
\end{align*}
 Finally, we conclude the proof exploiting the boundedness of the operators $\lambda_s$ and $\lambda'_{y,s}$ and \eqref{SysComposition}.
\end{proof}

The last proposition allows us to prove that the operator $S$ defined by \eqref{SFGeneral} extends to a continuous operator with respect to the $L^p$ norm.
\begin{thm}\label{HolomorphicityHp}
For every $p\in(1,\infty)$, the operator $S$ extends to a bounded linear operator
$$
S:L^p(\partial D'_\beta)\to H^p( D'_\beta).
$$
\end{thm}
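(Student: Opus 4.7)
The plan is to combine the uniform $L^p$-estimate of Proposition~\ref{TransferredLpBounds} with a density argument. The class of functions $\varphi=(\varphi_1,\varphi_2,\varphi_3,\varphi_4)$ each of whose components has the finite-Fourier form \eqref{FiniteSum} is dense in $L^p(\p D'_\beta)$, and for such $\varphi$ the formula \eqref{SFGeneral} defines $S\varphi$ pointwise and places it in $H^2(D'_\beta)$ via Proposition~\ref{SFH2}; in particular $S\varphi$ is holomorphic on $D'_\beta$. So I would first establish the operator bound on this dense subclass and then extend by continuity.

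For the bound, observe that the integrand in $\clL_p S\varphi(t,s)$ from \eqref{WormGrowth} is, on each of its four pieces, exactly of the form $|S_{y,s}\varphi|^p$ for a pair $(y,s)$ that depends on $(t,s)$ and on the piece, and that as $(t,s)$ ranges over $[0,\pih)\times[0,\beta-\pih)$ these pairs $(y,s)$ stay in the admissible range where Proposition~\ref{TransferredLpBounds} applies. Summing the resulting four bounds and taking the supremum over $(t,s)$ yields
$$
\|S\varphi\|_{H^p(D'_\beta)}\leq 4^{1/p}\,C_p\,\|\varphi\|_{L^p(\p D'_\beta)},
$$
with $C_p$ the constant from Proposition~\ref{TransferredLpBounds}, which is independent of $(y,s)$ and therefore of $(t,s)$.

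To extend $S$ to all of $L^p(\p D'_\beta)$, given $\varphi\in L^p$ I would pick an approximating sequence $\varphi_n$ in the dense subclass and use linearity with the bound above to see that $\{S\varphi_n\}$ is Cauchy in the $H^p(D'_\beta)$ norm. Proposition~\ref{Weak*Hp}(ii) says this norm dominates the sup on every compact subset of $D'_\beta$, so $\{S\varphi_n\}$ converges locally uniformly to a holomorphic function $F$ on $D'_\beta$. Applying Fatou's lemma inside each of the four integrals in \eqref{WormGrowth} and then taking the supremum in $(t,s)$ shows $F\in H^p(D'_\beta)$ with the same bound. Setting $S\varphi:=F$ gives a well-defined bounded extension, the well-definedness following from uniqueness of local uniform limits applied to two approximating sequences. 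The only obstacle is the uniform $L^p$-multiplier estimate itself, which has already been supplied by Proposition~\ref{TransferredLpBounds}; what remains here is almost formal.
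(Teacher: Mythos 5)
Your proposal is correct and follows essentially the same route as the paper: holomorphy of $S\varphi$ on a dense subclass via Proposition~\ref{SFH2}, the uniform bound by identifying the four pieces of $\clL_pS\varphi(t,s)$ with $\|S_{y,s}\varphi\|_{L^p}^p$ and invoking Proposition~\ref{TransferredLpBounds}, then extension by density. The paper compresses the final step to ``by density,'' whereas you correctly spell out the local-uniform-convergence and Fatou argument needed to see that the limit is holomorphic and satisfies the $H^p$ growth condition.
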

\begin{proof}
Suppose that $\varphi=(\varphi_1, 0, 0, 0) $ is a function in $L^p(\partial D'_\beta)\cap L^2(\partial D'_\beta)$. Then, Proposition \ref{SFH2} assures that $S\varphi$ is holomorphic on $D'_\beta$. Moreover, 
\begin{align}\nonumber
  \|&S\varphi\|^p_{H^p(D'_\beta)}=\sup_{(t,s)\in[0,\pih)\times[0,\beta-\pih)}\clL_pS\varphi(t,s)\\ \nonumber
   &=\sup_{(t,s)}\bigg[\|S_{s+t,s}\varphi\|^p_{L^p}+\|S_{s-t,s}\varphi\|^p_{L^p}+\|S_{-(s+t),-s}\varphi\|^p_{L^p}+\|S_{-(s-t),-s}\varphi\|^p_{L^p}\bigg]\\ \label{SFHp}
  &\leq C_p \|\varphi\|^p_{L^p(\p D'_\beta)}
\end{align}
with $C_p$ independent of $t$ and $s$ thanks to Proposition \ref{TransferredLpBounds}. Thus, we proved the theorem when $\varphi$ is in $L^p(\partial D'_\beta)\cap L^2(\partial D'_\beta)$. By density we obtain the proof for a general function $\varphi$ in $L^p(\p D'_\beta)$.
\end{proof}

It remains to prove that $S\varphi$ admits a boundary value function $\widetilde{S\varphi}$ in $L^p$. In order to keep the length of this work contained, we only prove explicitly that that the component $\widetilde{S\varphi}_1$ of $\widetilde{S\varphi}$ is the function defined by \eqref{GeneralBoundaryValuesWormStrip}.

\begin{thm}\label{LimitSF1}
Let $\varphi=(\varphi_1, \varphi_2, \varphi_3, \varphi_4)$ be a function in $L^p(\partial D'_\beta)$. Then, for every $p\in(1,\infty)$,
\begin{align}\label{Limit0}
  &\lim_{(t,s)\to(\pih,\beta-\pih)}\|S\varphi[\cdot+i(s+t),e^{\frac{s}{2}}e^{2\pi i\cdot}]-\widetilde{S\varphi}_1\|_{L^p(\dsR\times\dsT)}=0.
\end{align}
\end{thm}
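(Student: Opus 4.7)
The plan is to follow a standard density plus uniform boundedness argument, reducing to a dense subclass where convergence can be verified explicitly, then extending by an $\varepsilon/3$ estimate. By linearity, it suffices to handle the case $\varphi=(\varphi_1,0,0,0)$, so that formulas \eqref{SimplySFGeneral} and \eqref{SimplySF1} apply; the other three boundary pieces are treated identically after bookkeeping on the exponentials that appear in \eqref{SFGeneral} and \eqref{GeneralBoundaryValuesWormStrip}.

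First, I would check the claim on the dense subspace $\mathcal{D}$ of $L^p(\partial D'_\beta)$ consisting of $\varphi_1$ of the form \eqref{FiniteSum} with each $\varphi_1(\cdot,j)\in C^\infty_0(\mathbb{R})$. For such $\varphi_1$, the sum over $j$ in \eqref{SimplySFGeneral} and \eqref{SimplySF1} is finite, so the difference
\[
D_{t,s}(x,\gamma):=S\varphi[x+i(s+t),e^{s/2}e^{2\pi i\gamma}]-\widetilde{S\varphi}_1(x+i\beta,e^{\frac{1}{2}(\beta-\pih)}e^{2\pi i\gamma})
\]
is a finite sum, indexed by $|j|<N$, of terms of the form $e^{2\pi ij\gamma}\mathcal{F}_{\mathbb{R}}^{-1}[M_{j,t,s}\,\mathcal{F}_{\mathbb{R}}\varphi_1(\cdot,j)](x)$, where
\[
M_{j,t,s}(\xi)=\frac{e^{-(\beta-\pih+s)(\xi-\frac{j}{2})}e^{-(\pih-s+y)\xi}-e^{-(2\beta-\pi)(\xi-\frac{j}{2})}e^{-\pi\xi}}{4\ch[\pi\xi]\ch[(2\beta-\pi)(\xi-\frac{j}{2})]},
\]
with $y=s+t$. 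The multiplier $M_{j,t,s}$ converges to $0$ pointwise as $(t,s)\to(\pih,\beta-\pih)$ and is bounded independently of $(t,s)$ by the Mihlin-type bound used in Proposition \ref{TransferredLpBounds}. Since $\mathcal{F}_{\mathbb{R}}\varphi_1(\cdot,j)\in \mathcal{S}(\mathbb{R})$ with rapid decay, Lebesgue's dominated convergence in the Fourier variable (combined with the $\xi$-integrability supplied by the $\cosh$ factors) gives $\|D_{t,s}\|_{L^p(\mathbb{R}\times\mathbb{T})}\to 0$ for every $\varphi\in\mathcal{D}$ and every $p\in(1,\infty)$.

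Next, I would establish the uniform $L^p$ bound for the limiting operator. Reading off the multipliers in \eqref{GeneralBoundaryValuesWormStrip}, each of the four pieces has the same $\cosh[\pi\,\cdot]\ch[(2\beta-\pi)(\cdot-\tfrac{j}{2})]$ denominator as $S_{y,s}$, with bounded exponential numerators. Exactly the factorization argument of Proposition \ref{TransferredLpBounds} (splitting into the two multipliers $m_s(\xi-\tfrac{j}{2})$ and $m'_{y,s}(\xi)$ specialized to the boundary values $y=\beta$, $s=\beta-\pih$, etc., and applying Mihlin's theorem) yields
\[
\|\widetilde{S\varphi}_1\|_{L^p(\mathbb{R}\times\mathbb{T})}\le C_p\|\varphi\|_{L^p(\partial D'_\beta)}
\]
for every $\varphi\in\mathcal{D}$, hence extends by density to all of $L^p(\partial D'_\beta)$.

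Finally, I would close the proof by the standard $\varepsilon/3$ splitting. Given $\varphi\in L^p(\partial D'_\beta)$ and $\varepsilon>0$, choose $\varphi^{(n)}\in\mathcal{D}$ with $\|\varphi-\varphi^{(n)}\|_{L^p}$ small and write
\begin{align*}
\|S\varphi[\cdot+i(s+t),&\,e^{s/2}e^{2\pi i\,\cdot}]-\widetilde{S\varphi}_1\|_{L^p}\\
&\le \|S_{s+t,s}(\varphi-\varphi^{(n)})\|_{L^p}+\|S_{s+t,s}\varphi^{(n)}-\widetilde{S\varphi^{(n)}}_1\|_{L^p}+\|\widetilde{S(\varphi-\varphi^{(n)})}_1\|_{L^p}.
\end{align*}
The first term is controlled, uniformly in $(t,s)$, by $C_p\|\varphi-\varphi^{(n)}\|_{L^p}$ via Proposition \ref{TransferredLpBounds}; the third by the bound established in the previous paragraph; the second tends to $0$ as $(t,s)\to(\pih,\beta-\pih)$ by the first step. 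Taking $n$ large and then letting $(t,s)$ approach the target yields \eqref{Limit0}. The only real obstacle is verifying the Mihlin hypotheses uniformly in $s$ and in the shift $\tfrac{j}{2}$ for the multiplier $m_s(\cdot-\tfrac{j}{2})$, but this is precisely what Proposition \ref{TransferredLpBounds} has already set up, so the argument reduces to reusing that computation at the boundary value $s=\beta-\pih$.
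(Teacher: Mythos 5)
Your proposal is correct in outline but takes a genuinely different route from the paper. The paper does not use a density-plus-$\varepsilon/3$ scheme for the convergence itself: it writes the difference $\widetilde{S\varphi}_1-S_{s+t,s}\varphi$ as $T^{I}_{t,s}\varphi+T^{\II}_{t,s}\varphi$ via the algebraic identity $AB-CD=\tfrac12[(A-C)(B+D)+(A+C)(B-D)]$ applied to the two exponential factors of the multiplier, and then factors each $T^{I}_{t,s}$, $T^{\II}_{t,s}$ as a composition $\Lambda\circ\Xi$ in which one factor depends only on $t$ and the other only on $s$. One factor in each composition is uniformly bounded (Propositions \ref{LambdaS1} and \ref{LambdaT2}) and the other tends to zero strongly on all of $L^p$ (Propositions \ref{LambdaT1} and \ref{LambdaS2}, which lean on the strip computation of Theorem \ref{SzegoStriscia}). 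What the paper's decomposition buys is a clean separation of the two parameters in the joint limit $(t,s)\to(\pih,\beta-\pih)$, and it yields convergence to zero of the difference operator applied to an \emph{arbitrary} $L^p$ function in one stroke. What your approach buys is economy: you reuse Proposition \ref{TransferredLpBounds} twice (once for $S_{y,s}$, once for the boundary operator specialized at $y=\beta$, $s=\beta-\pih$) and only need to verify convergence on the dense class \eqref{FiniteSum}, avoiding the two extra strong-convergence propositions. A side benefit of your route is that you obtain the $L^p$ bound for $\varphi\mapsto\widetilde{S\varphi}_1$ directly from the multiplier, rather than as a byproduct of the convergence statement.

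The one step you should tighten is the convergence on the dense class. Dominated convergence in the Fourier variable $\xi$, using $|M_{j,t,s}|\leq C$ uniformly and $M_{j,t,s}\to 0$ pointwise, gives $\|\clF^{-1}_\dsR[M_{j,t,s}\clF_\dsR\varphi_1(\cdot,j)]\|_{L^\infty(\dsR)}\to 0$, not $L^p(\dsR)$ convergence: sup-norm decay says nothing about integrability in $x$. For $\varphi_1(\cdot,j)\in C^\infty_0(\dsR)$ the gap is closed in a standard way, e.g. by noting that the first two $\xi$-derivatives of $M_{j,t,s}$ are also bounded uniformly in $(t,s)$ (differentiation only brings down the bounded constants $\pih+t$, $\beta-\pih+s$, $\pi$, $2\beta-\pi$ against the same $\ch$ denominators), so that $(1+x^2)\,\clF^{-1}_\dsR[M_{j,t,s}\clF_\dsR\varphi_1(\cdot,j)](x)$ is bounded uniformly in $(t,s)$; this furnishes a uniform $L^1(\dsR)$ bound, and combining it with the sup-norm convergence via $\|f\|_{L^p}^p\leq\|f\|_{L^\infty}^{p-1}\|f\|_{L^1}$ gives the $L^p$ convergence you claim. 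With that repair, and the routine bookkeeping for the other three boundary components, your argument is complete.
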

The proof of the theorem will follow from a series of results that we state and prove separately. Let us fix some notation. Given $\varphi=(\varphi_1, 0, 0, 0)$, from \eqref{SimplySF1}, \eqref{SimplySFGeneral} and simple computations, we obtain
 \begin{align}\nonumber
  [&\widetilde{S\varphi}_1-S_{s+t,s}\varphi](x,\gamma)\\ \nonumber
  &=\sum_{j\in\dsZ}e^{2\pi ij\gamma}\clF^{-1}_{\dsR}\left[\frac{e^{-(2\beta-\pi)(\cdot-\frac{j}{2})}e^{-\pi(\cdot)}-e^{-(\beta-\pih+s)(\cdot-\frac{j}{2})}e^{-(\pih+t)(\cdot)}}{4\ch[\pi(\cdot)]\ch[(2\beta-\pi)(\cdot-\frac{j}{2})]}\clF_{\dsR}\varphi_1(\cdot,\hat{j}\,)\right](x)\\ \nonumber
  &=\sum_{j\in\dsZ}e^{2\pi ij\gamma}\clF^{-1}_{\dsR}\left[m^{I}_{t,s}(\cdot,j)\clF_{\dsR}\varphi_1(\cdot,\hat{j}\,)\right](x)+\sum_{j\in\dsZ}e^{2\pi ij\gamma}\clF^{-1}_{\dsR}\left[m^{\II}_{t,s}(\cdot,j)\clF_{\dsR}\varphi_1(\cdot,\hat{j}\,)\right](x)\\ \label{TSF1}
  &=T^{I}_{t,s}\varphi(x,\gamma)+T^{\II}_{t,s}\varphi(x,\gamma),
\end{align}
where
\begin{align*}
  &m_{t,s}^{I}(\xi,j)=\frac{1}{8}\bigg[\frac{e^{-\pi\xi}-e^{-(\pih+t)\xi}}{\ch[\pi\xi]}\bigg]\!\bigg[\frac{e^{-(2\beta-\pi)(\xi-\frac{j}{2})}+e^{-(\beta-\pih+s)(\xi-\frac{j}{2})}}{\ch[(2\beta-\pi)(\xi-\frac{j}{2})]}\bigg];\\
  &m_{t,s}^{\II}\!(\xi,j)\!=\!\frac{1}{8}\bigg[\frac{e^{-\pi\xi}+e^{-(\pih+t)\xi}}{\ch[\pi\xi]}\bigg]\!\bigg[\frac{e^{-(2\beta-\pi)(\xi-\frac{j}{2})}-e^{-(\beta-\pih+s)(\xi-\frac{j}{2})}}{\ch[(2\beta-\pi)(\xi-\frac{j}{2})]}\bigg].
\end{align*}
Thus, similarly to the proof of Proposition \ref{TransferredLpBounds}, the operators $T^{I}_{t,s}$ and $T^{II}_{t,s}$ can be seen as a composition of simpler operators. Namely,
\begin{align}\label{CompositionSF1}
  &T^{I}_{t,s}\varphi(x,\gamma)=[\Lambda^I_s\circ\Xi^I_t]\varphi(x,\gamma)\\ \label{CompositionSF1II}
  &T^{\II}_{t,s}\varphi(x,\gamma)=[\Lambda^{\II}_s\circ\Xi^{\II}_t]\varphi(x,\gamma),
\end{align}
where $\Lambda^I_s$, $\Xi^{I}_t$, $\Lambda_s^{\II}$ and $\Xi^{\II}_t$ are defined by 
\begin{align*}
  &\Lambda^I_s\varphi(x,\gamma):=\sum_{j\in\dsZ}\frac{e^{2\pi ij\gamma}}{2\pi}\int_{\dsR}\frac{e^{-(2\beta-\pi)(\xi-\frac{j}{2})}+e^{-(\beta-\pih+s)(\xi-\frac{j}{2})}}{\ch[(2\beta-\pi)(\xi-\frac{j}{2})]}\clF_\dsR \varphi_1(\xi,\hat{j}\,)e^{ix\xi}\ d\xi; \\
  &\Xi^I_t \varphi(x,\gamma):=\frac{1}{2\pi}\int_{\dsR}\frac{e^{-\pi\xi}-e^{-(\pih+t)\xi}}{\ch[\pi\xi]}\clF_\dsR \varphi_1(\xi,\gamma)e^{ix\xi}d \xi;\\
  &\Lambda^{\II}_s\varphi(x,\gamma):=\sum_{j\in\dsZ}\frac{e^{2\pi
      ij\gamma}}{2\pi}\int_{\dsR}\frac{e^{-(2\beta-\pi)(\xi-\frac{j}{2})}-e^{-(\beta-\pih+s)(\xi-\frac{j}{2})}}{\ch[(2\beta-\pi)(\xi-\frac{j}{2})]}\clF_\dsR 
  \varphi_1(\xi,\hat{j}\,)e^{ix\xi}\ d\xi;\\ 
  &\Xi^{\II}_t \varphi(x,\gamma):=\frac{1}{2\pi}\int_{\dsR}\frac{e^{-\pi\xi}+e^{-(\pih+t)\xi}}{\ch[\pi\xi]}\clF_\dsR \varphi_1(\xi,\gamma)e^{ix\xi}d \xi.
\end{align*}

%

So, in order to prove \eqref{Limit0}, we study the operators $\Lambda_s, \Xi_t, \Lambda'_s$ and $\Xi'_t$ separately. The realization of $T^{I}_{t,s}$ and $T^{II}_{t,s}$ as composition of these operators is particularly effective since the parameters $t$ and $s$ become, in some sense, independent.  
\begin{prop}\label{LambdaS1}
The operator $\Lambda^{I}_s$ extends to a bounded linear operator
$$
\Lambda^I_s : L^p(\dsR\times\dsT)\to L^p(\dsR\times\dsT)
$$
for every $p\in(1,\infty)$. Moreover,  if $\opnorm{\Lambda^I_s}{p}$ denotes the operator norm of $\Lambda^I_s$, it holds
\begin{equation}\label{UniformBoundLambdaS1}
\sup_{s\in[0,\beta-\pih)}\opnorm{\Lambda^I_s}{p}<\infty.
\end{equation}
\end{prop}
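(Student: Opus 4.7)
The plan is to mimic the structure of the proof of Proposition \ref{TransferredLpBounds}, factoring $\Lambda^I_s$ through a Mihlin multiplier on $\mathbb{R}$ (uniformly in $s$) and a modulation that preserves $L^p(\mathbb{R}\times\mathbb{T})$ norms. First I would introduce the one-variable symbol
$$
m_s(\eta):=\frac{e^{-(2\beta-\pi)\eta}+e^{-(\beta-\pih+s)\eta}}{\ch[(2\beta-\pi)\eta]}
$$
so that, by the translation property of the Fourier transform,
$$
\Lambda^I_s\varphi(x,\gamma)=\sum_{j\in\dsZ}e^{2\pi ij\gamma}\clF^{-1}_\dsR\bigl[m_s(\cdot-\tfrac{j}{2})\,\clF_\dsR\varphi_1(\cdot,\hat j\,)\bigr](x).
$$

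Next I would check that $m_s$ is a Mihlin multiplier on $\dsR$ with constants independent of $s\in[0,\beta-\pih)$. The key behaviour at infinity is: as $\eta\to+\infty$ both numerator terms decay at least like $e^{-(\beta-\pih+s)\eta}$ against a denominator growing like $\frac12 e^{(2\beta-\pi)\eta}$, so $m_s(\eta)$ decays exponentially; as $\eta\to-\infty$ the dominant numerator term is $e^{-(2\beta-\pi)\eta}$ (this is where the constraint $s<\beta-\pih$ enters, ensuring $2\beta-\pi>\beta-\pih+s$), which is exactly balanced by $\ch[(2\beta-\pi)\eta]$, giving a bounded limit. Analogous estimates, differentiating the quotient and exploiting the exponential decay gained at $\pm\infty$, yield $|\eta\, m'_s(\eta)|+|\eta^2 m''_s(\eta)|\le C$ with $C$ independent of $s$.

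Then I would carry out the same shift-and-periodicity manoeuvre used in Proposition \ref{TransferredLpBounds}. Performing the change of variable $\xi\mapsto \eta+\tfrac{j}{2}$ inside each summand and using $\clF_\dsR\varphi_1(\eta+\tfrac{j}{2},\hat j\,)=\clF_\dsR\bigl[e^{-i\frac{j}{2}(\cdot)}\varphi_1(\cdot,j)\bigr](\eta)$,
\begin{align*}
\Lambda^I_s\varphi(x,\gamma)
&=\frac{1}{2\pi}\int_\dsR m_s(\eta)\sum_{j\in\dsZ} e^{2\pi ij(\gamma+\frac{x}{4\pi})}\clF_\dsR\bigl[e^{-i\frac{j}{2}(\cdot)}\varphi_1(\cdot,j)\bigr](\eta)\,e^{ix\eta}\,d\eta.
\end{align*}
Hence, denoting by $T_{m_s}$ the Fourier multiplier on $\dsR$ with symbol $m_s$ (acting in the $x$ variable only) and by $U$ the isometric reshuffling $U\varphi(x,\gamma):=\sum_j e^{-i\frac{j}{2}x}\varphi_1(x,j)e^{2\pi ij\gamma}$, one obtains the factorization $\Lambda^I_s\varphi(x,\gamma)=[T_{m_s}(U\varphi)](x,\gamma+\tfrac{x}{4\pi})$. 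A periodicity/change-of-variable argument identical to the one at the end of Proposition \ref{TransferredLpBounds} shows that the map $(x,\gamma)\mapsto(x,\gamma+\tfrac{x}{4\pi})$ and the phase modulation $U$ are both $L^p(\dsR\times\dsT)$-isometries; applying Mihlin's multiplier theorem in the $x$-variable with the uniform bound on $m_s$ then yields
$$
\|\Lambda^I_s\varphi\|_{L^p(\dsR\times\dsT)}\le C_p\|\varphi\|_{L^p(\dsR\times\dsT)},\qquad \sup_{s\in[0,\beta-\pih)}\opnorm{\Lambda^I_s}{p}<\infty.
$$

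The main obstacle is the multiplier check: one has to verify that the two exponentials in the numerator never outpace $\ch[(2\beta-\pi)\eta]$ in either direction and that the derivative estimates survive with constants independent of $s$. The strict inequality $s<\beta-\pih$ is precisely what makes this uniform control possible; once this is in place the rest of the argument is a mechanical transcription of the proof of Proposition \ref{TransferredLpBounds}.
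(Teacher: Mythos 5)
Your proposal is correct and follows essentially the same route as the paper: reduce by density to finite trigonometric sums, shift the variable $\xi\mapsto\eta+\tfrac{j}{2}$ to turn the $j$-dependent multiplier into a single symbol $m_s$ acting on a modulated function, absorb the resulting phase $e^{ixj/2}=e^{2\pi ij\frac{x}{4\pi}}$ by periodicity in $\gamma$, and apply Mihlin's theorem with constants uniform in $s\in[0,\beta-\pih)$. The only difference is cosmetic (you package the computation as an explicit factorization through an isometry $U$ and a shear of $\dsR\times\dsT$, where the paper writes out the integrals directly), so nothing further is needed.
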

\begin{proof}
By density it suffices to prove the theorem for a function $g$ of the form $g(x,\gamma)=\sum\limits_{j=-N}^N g(x,j) e^{2\pi ij\gamma} $ and each $g(\cdot,j)$ is in $C^\infty_0(\dsR)$. Then, similarly to the proof of Proposition \ref{TransferredLpBounds} for the operator $\lambda_s$, we obtain
\begin{align*}
  &\int_\dsR\int_0^1\big|\Lambda^I_s g(x,\gamma)\big|^p\ d\gamma dx\\
  &=\!\int_\dsR\!\int_0^1\!\!\bigg|\frac{1}{2\pi}\!\int_\dsR\frac{e^{-(2\beta-\pi)\xi}+e^{-(\beta-\pih+s)\xi}}{\ch[(2\beta-\pi)\xi]}\!\clF_\dsR\!\bigg[\!\sum_{j=-N}^N e^{i\frac{j}{2}(\cdot)}(\cdot)g(\cdot,j)e^{2\pi ij\gamma}\bigg](\xi)e^{ix\xi}d\xi\bigg|^p\!d\gamma dx.
\end{align*}
By Mihlin's condition, we obtain that the function 
\begin{equation}\label{BoundsNorms}
\xi\mapsto\frac{e^{-(2\beta-\pi)\xi}+e^{-(\beta-\pih+s)\xi}}{\ch[(2\beta-\pi)\xi]}
\end{equation}
identifies a multiplier operator that is bounded on $L^p(\dsR)$ for every $p\in(1,\infty)$ and that satisfies (\ref{UniformBoundLambdaS1}). Notice also that the function $\sum\limits_{j=-N}^N e^{i\frac{j}{2}x}g(x,j)e^{2\pi ij\gamma} $ is in $L^p(\dsR\times\dsT)$.

Finally, by Fubini's theorem,
\begin{align*}
  &\int_\dsR\int_0^1\big|\Lambda^I_s g(x,\gamma)\big|^p\  d\gamma dx\\
  &=\int_0^1 \int_\dsR \bigg|\frac{1}{2\pi}\int_\dsR\frac{e^{-(2\beta-\pi)\xi}+e^{-(\beta-\pih+s)\xi}}{\ch[(2\beta-\pi)\xi]}\clF_\dsR\bigg[\sum_{j=-N}^N e^{i\frac{j}{2}}(\cdot)g(\cdot,j)e^{2\pi ij\gamma}\bigg](\xi)e^{ix\xi}d\xi\bigg|^p dxd\gamma \\
  &\leq C_p \int_0^1\int_\dsR \bigg|\sum_{j=-N}^N e^{i\frac{j}{2}x}g(x,j)e^{2\pi ij\gamma}\bigg|^p\ dxd\gamma\\
  &= C_p \int_0^1\int_\dsR \bigg|\sum_{j=-N}^N g(x,j)e^{2\pi ij\gamma}\bigg|^p\ dxd\gamma.
\end{align*}
\end{proof}
A similar argument proves the analogous result for the operator $\Xi^{\II}_t$.
\begin{prop}\label{LambdaT2}
The operator $\Xi^{\II}_t$ extends to a bounded linear operator
$$
\Xi^{\II}_t : L^p(\dsR\times\dsT)\to L^p(\dsR\times\dsT)
$$
for every $p\in(1,\infty)$. Moreover, if $\opnorm{\Xi^{\II}_t}{p}$ denotes the operator norm of $\Xi^{\II}_t$, it holds
\begin{equation}\label{UniformBoundLambdaT2}
\sup_{t\in[0,\pih)}\opnorm{\Xi^{\II}_t}{p}<\infty.
\end{equation}
\end{prop}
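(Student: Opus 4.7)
The plan is to mirror the strategy used for Proposition \ref{LambdaS1} (and for Theorem \ref{Lpstriscia}): exhibit $\Xi^{\II}_t$ as a Fourier multiplier in the $x$-variable only, then verify Mihlin's condition with constants uniform in the parameter $t\in[0,\pih)$. Observe that the symbol
$$
m^{\II}_t(\xi)=\frac{e^{-\pi\xi}+e^{-(\pih+t)\xi}}{\ch[\pi\xi]}
$$
does not depend on $\gamma$, so for $\varphi\in L^p(\dsR\times\dsT)$ one has
$$
\Xi^{\II}_t\varphi(x,\gamma)=\clF^{-1}_\dsR\bigl[m^{\II}_t(\cdot)\clF_\dsR\varphi(\cdot,\gamma)\bigr](x).
$$
By Fubini's theorem, the $L^p(\dsR\times\dsT)$ operator norm of $\Xi^{\II}_t$ equals the $L^p(\dsR)$ norm of the scalar multiplier with symbol $m^{\II}_t$. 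Therefore the statement reduces to checking that $m^{\II}_t$ is an $L^p(\dsR)$-multiplier uniformly in $t$, and this is exactly Mihlin's criterion applied to the family $\{m^{\II}_t\}_{t\in[0,\pih)}$.

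Next, I would verify the Mihlin estimates $\sup_\xi|m^{\II}_t(\xi)|\leq C$ and $\sup_\xi|\xi (m^{\II}_t)'(\xi)|\leq C$ with $C$ independent of $t$. The first bound uses $\pih+t\in[\pih,\pi)$: for $\xi\geq 0$ both exponentials in the numerator are bounded by $1$ while $\ch[\pi\xi]\geq 1$, and for $\xi<0$ the inequality $e^{-(\pih+t)\xi}\leq e^{-\pi\xi}$ together with $\ch[\pi\xi]\geq\tfrac12 e^{-\pi\xi}$ yields $|m^{\II}_t(\xi)|\leq 4$. The second estimate follows from direct differentiation: the asymptotics $\ch[\pi\xi]\sim\tfrac12 e^{\pi|\xi|}$ and $\sh[\pi\xi]\sim\pm\tfrac12 e^{\pi|\xi|}$ show that $(m^{\II}_t)'(\xi)$ decays exponentially at $+\infty$ (with rate $\min(2\pi,\tfrac{3\pi}{2}+t)\geq\tfrac{3\pi}{2}$) and at $-\infty$ (with rate $\pih-t\geq 0$; the leading term cancels to give decay at rate $\pih-t$, while the exponents $\pi$ and $\pih+t$ stay in the compact set $[\pih,\pi]$). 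Consequently $\xi(m^{\II}_t)'(\xi)$ is uniformly bounded.

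Applying Mihlin's multipliers theorem (as cited in the proof of Theorem \ref{Lpstriscia}) then yields the $L^p(\dsR)$-boundedness of the scalar multiplier with a bound that depends only on the Mihlin constants just established, and therefore only on $p$, not on $t$. Combined with the reduction in the first paragraph, this gives \eqref{UniformBoundLambdaT2}. The only potential obstacle is the degradation of the decay rate $\pih-t$ as $t\to\pih^-$, but this does not actually produce a blow-up: the bounds obtained above remain finite uniformly down to $t=\pih$, where $m^{\II}_{\pih}(\xi)=2e^{-\pi\xi}/\ch[\pi\xi]$ is itself a bounded Mihlin symbol. Hence uniformity is genuinely maintained throughout $[0,\pih)$.
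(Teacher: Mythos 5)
Your proof is correct and follows essentially the same route as the paper, which disposes of this proposition by appeal to the same Mihlin-type argument used for $\Lambda^I_s$ (your reduction is in fact slightly cleaner, since $\Xi^{\II}_t$ involves no $j$-shift and is a pure multiplier in the $x$-variable for each fixed $\gamma$). The one point you gloss over---uniformity of the bound on $\xi\,(m^{\II}_t)'(\xi)$ as $t\to\pih^-$---is indeed harmless, because the relevant term of the derivative carries the prefactor $\pih-t$ that compensates the degenerating decay rate, via the elementary bound $\sup_{u>0}a u e^{-au}=e^{-1}$ valid uniformly in $a>0$.
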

At last, we prove analogous proposition for the operators $\Xi^I_t$ and $\Lambda^{\II}_s$, but with an additional conclusion.
\begin{prop}\label{LambdaT1}
The operator $\Xi^I_t$ extends to a bounded linear operator 
$$
\Xi^I_t:L^p(\dsR\times\dsT)\to L^p(\dsR\times\dsT)
$$
for every $p\in(1,\infty)$. Moreover,
$$
\sup_{t\in[0,\pih)}\opnorm{\Xi^I_t}{p}<\infty
$$
and
$$
\lim_{t\to\pih}\|\Xi^I_t g\|_{L^p(\dsR\times\dsT)}=0
$$
for every function $g$ in $L^p(\dsR\times\dsT)$.
\end{prop}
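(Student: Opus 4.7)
I would follow the template of the proof of Proposition \ref{LambdaS1}, adding one extra ingredient to secure the vanishing in (iii). For (i) and (ii) the strategy is to apply Mihlin's multiplier theorem to the symbol
$$
m^I_t(\xi)=\frac{e^{-\pi\xi}-e^{-(\pih+t)\xi}}{\ch[\pi\xi]}=\frac{2\bigl(1-e^{(\pih-t)\xi}\bigr)}{1+e^{2\pi\xi}},
$$
viewed as acting on $L^p(\dsR)$ with $\gamma$ as a parameter. Boundedness of $m^I_t$ is immediate---exponential decay at $+\infty$ and $|m^I_t|\le 2$ at $-\infty$---so the only delicate point is the estimate $|\xi(m^I_t)'(\xi)|\le C$ with $C$ independent of $t$. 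Setting $a:=\pih-t\in(0,\pih]$, the potentially offending terms are of the shape $a\xi e^{a\xi}$ on $\xi<0$; the substitution $u=-a\xi\ge 0$ gives $|a\xi e^{a\xi}|=ue^{-u}\le 1/e$, a universal bound. Together with routine estimates at $+\infty$ and near the origin this yields Mihlin's condition uniformly in $t$, and since $m^I_t$ depends neither on the torus variable nor on $j$, Fubini transfers the bound to $L^p(\dsR\times\dsT)$.

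For the limit in (iii) I would combine $L^2$-convergence with an interpolation of $L^p$-norms and density. For $g\in L^2(\dsR\times\dsT)$, Plancherel gives
$$
\|\Xi^I_t g\|_{L^2}^2=\frac{1}{2\pi}\sum_{j\in\dsZ}\int_\dsR|m^I_t(\xi)|^2\bigl|\clF_\dsR g(\xi,\hat j\,)\bigr|^2 d\xi,
$$
and since $m^I_t(\xi)\to 0$ pointwise as $t\to\pih$ while being uniformly bounded, dominated convergence forces $\|\Xi^I_t g\|_{L^2}\to 0$. Next, for $g\in\clC^\infty_0(\dsR\times\dsT)$ (dense in $L^p$ and lying in every $L^r$), the case $p=2$ is already handled; for $p\ne 2$, pick an auxiliary exponent $r\in(1,\infty)$ on the opposite side of $p$ from $2$ and apply the log-convexity estimate $\|h\|_{L^p}\le\|h\|_{L^2}^\theta\|h\|_{L^r}^{1-\theta}$ with $\theta\in(0,1)$ to $h=\Xi^I_t g$: the $L^r$-factor is uniformly bounded by (i)-(ii), the $L^2$-factor vanishes, so $\|\Xi^I_t g\|_{L^p}\to 0$. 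A standard approximation argument, using the uniform operator bound from (ii), extends the conclusion to every $g\in L^p(\dsR\times\dsT)$.

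The main obstacle is the uniform Mihlin estimate: a naive bound on $(m^I_t)'$ diverges as $t\to\pih$, and one must isolate the scaling $a\xi e^{a\xi}$ on the negative half-line to recover a $t$-independent constant. It is also worth noting that a direct operator-norm argument for (iii) is unavailable: $\|m^I_t\|_{L^\infty}$ does \emph{not} tend to zero (the pointwise limit at $-\infty$ equals $2$ for every $t<\pih$), so only strong, and not norm, convergence can be expected, which is exactly what the interpolation step above delivers.
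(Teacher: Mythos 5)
Your argument is correct. For the uniform boundedness your route coincides with the paper's: the proof there simply invokes Mihlin's condition for the symbol $m^I_t$, and the point you isolate --- that the dangerous term $a\xi e^{a\xi}$ on $\xi<0$, with $a=\pih-t$, is controlled by the scale-invariant bound $ue^{-u}\le e^{-1}$ --- is exactly what makes the constant independent of $t$. Where you genuinely diverge is in the proof that $\|\Xi^I_t g\|_{L^p}\to 0$. The paper obtains this limit by reducing to Theorem \ref{SzegoStriscia} for the strip $S_{\pih}$: the symbol $m^I_t$ is the difference of the boundary multiplier $e^{-\pi\xi}/\ch[\pi\xi]$ and its interior counterpart $e^{-(\pih+t)\xi}/\ch[\pi\xi]$, so the vanishing is the statement that $S\varphi(\cdot+iy)\to\widetilde{S\varphi}$ in $L^p$, proved there by real-variable means (a summability kernel plus a comparison with the Calder\'on--Zygmund singular integral with kernel $1/\sh$). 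You instead get the $L^2$ limit from Plancherel and dominated convergence (using that $m^I_t\to 0$ pointwise while staying uniformly bounded), pass to $L^p$ on a dense class by log-convexity of the $L^p$ norms between $2$ and an auxiliary exponent $r$, and finish by density together with the uniform operator bound. Your route is softer and avoids re-running the singular-integral analysis; the paper's route is heavier but also delivers pointwise a.e.\ convergence, which is not needed for this proposition but is of the same nature as what is exploited later in the pointwise-convergence section. Your closing remark that only strong (not operator-norm) convergence can hold, since $m^I_t(\xi)\to 2$ as $\xi\to-\infty$ for every $t<\pih$, is accurate and worth keeping.
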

\begin{proof}
The boundedness of $\Xi^I_t$ follows once again by Mihlin's condition, while the limit is computed as in Theorem \ref{SzegoStriscia} for the strip $S_\pih$. 
\end{proof}
\begin{prop}\label{LambdaS2}
The operator $\Lambda^{\II}_s$ extends to a bounded linear operator 
$$
\Lambda^{\II}_s:L^p(\dsR\times\dsT)\to L^p(\dsR\times\dsT)
$$
for every $p\in(1,\infty)$. Moreover,
$$
\sup_{s\in[0,\beta-\pih}\opnorm{\Lambda^{\II}_s}{p}<\infty.
$$
and
$$
\lim_{s\to\beta-\pih}\|\Lambda^{\II}_s g\|_{L^p(\dsR\times\dsT)}=0
$$
for every function $g$ in $L^p(\dsR\times\dsT)$.
\end{prop}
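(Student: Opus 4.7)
The plan is to proceed in close parallel with Proposition \ref{LambdaS1}, which handles the analogous operator with a plus sign between the two exponentials in the numerator. Writing
$$
m_s(\eta) := \frac{e^{-(2\beta-\pi)\eta} - e^{-(\beta-\pih+s)\eta}}{\ch[(2\beta-\pi)\eta]} = \frac{e^{-(2\beta-\pi)\eta}\bigl(1 - e^{(\beta-\pih-s)\eta}\bigr)}{\ch[(2\beta-\pi)\eta]},
$$
the symbol of $\Lambda^{\II}_s$ is $m_s(\xi-\tfrac{j}{2})$. I would then perform the same change of variable $\xi\mapsto\xi+\tfrac{j}{2}$ and use of the periodicity of $e^{2\pi ij\gamma}$ employed in the proofs of Proposition \ref{TransferredLpBounds} and Proposition \ref{LambdaS1}, which reduces the $L^p(\dsR\times\dsT)$-boundedness of $\Lambda^{\II}_s$ to the $L^p(\dsR)$-boundedness of the single Fourier multiplier $m_s$. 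A routine check verifies that $m_s$ and its derivatives satisfy Mihlin's condition with constants bounded uniformly for $s\in[0,\beta-\pih)$, yielding both the boundedness and the uniform operator-norm estimate.

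For the vanishing statement, the crucial observation is that $\beta-\pih+s\to 2\beta-\pi$ as $s\to\beta-\pih$, so the two exponentials in the numerator of $m_s$ coalesce and $m_s(\eta)\to 0$ pointwise, while $|m_s(\eta)|\le C$ uniformly in $\eta$ and $s$. I would then exploit this through a standard density argument. Let $\clD$ denote the dense subset of $L^p(\dsR\times\dsT)$ consisting of trigonometric polynomials $g(x,\gamma)=\sum_{|j|\le N}g(x,j)e^{2\pi ij\gamma}$ with each $g(\cdot,j)\in\clC^\infty_0(\dsR)$, already used for \eqref{FiniteSum}. Given $h\in L^p(\dsR\times\dsT)$ and $\varepsilon>0$, choose $g\in\clD$ with $\|h-g\|_{L^p}<\varepsilon$; by the uniform operator bound just established,
$$
\|\Lambda^{\II}_s h\|_{L^p} \le \opnorm{\Lambda^{\II}_s}{p}\,\|h-g\|_{L^p} + \|\Lambda^{\II}_s g\|_{L^p} \le C\varepsilon + \|\Lambda^{\II}_s g\|_{L^p},
$$
and the conclusion will follow once I show $\|\Lambda^{\II}_s g\|_{L^p}\to 0$ for every $g\in\clD$.

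For such $g$ and $p=2$, Plancherel's theorem expresses $\|\Lambda^{\II}_s g\|_{L^2}^2$ as a finite sum of integrals $\int_\dsR|m_s(\xi-\tfrac{j}{2})|^2|\clF_\dsR g(\xi,j)|^2\,d\xi$, and since each $\clF_\dsR g(\cdot,j)$ is Schwartz, Lebesgue's dominated convergence theorem (with dominating function $C|\clF_\dsR g(\cdot,j)|^2$) forces this expression to $0$. For the remaining $p\in(1,\infty)\setminus\{2\}$, I would interpolate: picking $q\in(1,\infty)$ so that $p$ lies strictly between $2$ and $q$, Lyapunov's log-convexity inequality gives
$$
\|\Lambda^{\II}_s g\|_{L^p}\le \|\Lambda^{\II}_s g\|_{L^2}^{\theta}\,\|\Lambda^{\II}_s g\|_{L^q}^{1-\theta},
$$
and the second factor stays bounded by the uniform $L^q$-estimate, so the left-hand side tends to $0$. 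The principal obstacle is exactly that $m_s$ converges to zero only pointwise and not uniformly in $\eta$---for instance $|m_s(\eta)|\to 2$ as $\eta\to-\infty$ for each fixed $s<\beta-\pih$, so one cannot hope to show the operator norm itself vanishes; this forces the density-plus-interpolation detour sketched above, and the regularity afforded by $\clD$ is exactly what makes the dominated convergence step work.
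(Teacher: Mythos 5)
Your argument is correct, and the first half (the uniform $L^p$ bound for $\Lambda^{\II}_s$ via the change of variable $\xi\mapsto\xi+\tfrac{j}{2}$, the periodicity of $e^{2\pi ij\gamma}$, and a Mihlin condition that is uniform in $s$ because $\beta-\pih+s$ stays in the compact range $[\beta-\pih,2\beta-\pi]$) is exactly what the paper does, since its one-line proof delegates this part to the argument of Proposition \ref{LambdaS1}. Where you genuinely diverge is the vanishing limit. The paper's route, inherited from Proposition \ref{LambdaT1}, is to recognize the multiplier $\big(e^{-(2\beta-\pi)\eta}-e^{-(\beta-\pih+s)\eta}\big)/\ch[(2\beta-\pi)\eta]$ as the difference of two ``evaluation at height'' operators for a symmetric strip and to invoke the kernel analysis of Theorem \ref{SzegoStriscia}: the convolution kernel splits into a summability kernel plus a truncated $1/\sh$-type singular integral, and classical Calder\'on--Zygmund theory gives the convergence. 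Your route instead establishes the limit on the dense class $\clD$ of trigonometric polynomials with $\clC^\infty_0$ coefficients --- by Plancherel and dominated convergence for $p=2$, then by log-convexity of $L^p$ norms against the uniform $L^q$ bound for general $p$ --- and transfers it to all of $L^p(\dsR\times\dsT)$ via $\sup_s\opnorm{\Lambda^{\II}_s}{p}<\infty$. Both are valid; your version is more elementary and self-contained in that it avoids any explicit kernel computation and the CZ machinery, at the cost of only yielding the norm limit (the paper's kernel approach also produces the almost-everywhere convergence that is exploited later, in Section \ref{PointConv}). Your closing observation that $|m_s(\eta)|\to 2$ as $\eta\to-\infty$ for each fixed $s$, so that the operator norms themselves cannot tend to zero and a density detour is unavoidable, is accurate and worth recording.
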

\begin{proof}
The proof follows similarly as the proofs of Proposition \ref{LambdaS1} and Proposition \ref{LambdaT1}
\end{proof}

Finally, using Propositions \ref{LambdaS1},\ref{LambdaT2}, \ref{LambdaT1} and \ref{LambdaS2} we obtain the proof of Theorem \ref{LimitSF1} for a function $\varphi$ of the form $\varphi=(\varphi_1, 0, 0, 0)$. The proof for a general function $\varphi$ follows with similar arguments. Moreover, Theorem \ref{HolomorphicityHp} and Theorem \ref{LimitSF1} prove Theorem \ref{t:Lp}.
\subsection{Sobolev regularity}
We can now prove Theorem \ref{t:Sobolev}.
\begin{proof}
 As usual, it suffices to prove the theorem for a function $\varphi=(\varphi_1, 0, 0, 0)$. For such a function $\varphi$, it holds
$$
\widetilde{S\varphi}_1(x+i\beta, e^{\frac{1}{2}(\beta-\pih)}e^{2\pi i\gamma})=\frac{1}{4}\sum_{j\in\dsZ}e^{2\pi ij\gamma}\clF_\dsR^{-1}\left[\frac{e^{-(2\beta-\pi)(\cdot-\frac{j}{2})}e^{-\pi(\cdot)}\clF_{\dsR}\varphi_1(\cdot,\hat{j}\,)}{\ch[\pi\cdot]\ch[(2\beta-\pi)(\cdot-\frac{j}{2})]}\right](x).
$$
We only prove explicitly that $\|\widetilde{S  \varphi}_1\|_{W^{k,p}(\dsR\times\dsT)}\leq \| \varphi\|_{W^{k,p}(\p D'_\beta)}$. With similar arguments, it is then possible to prove $\|\widetilde{S\varphi}\|_{W^{k,p}(\p D'_\beta)}\leq \|\varphi\|_{W^{k,p}(\p D'_\beta)}$.

Notice that
\begin{align*}
\sum_{j\in\dsZ}e^{2\pi ij\gamma}\clF^{-1}_{\dsR}&\Big[[1+j^2+(\cdot)^2]^{\frac{k}{2}}\clF_{\dsR}\widetilde{S\varphi}_1(\cdot,\widehat{j}\,)\Big](x)\\
&= \sum_{j\in\dsZ}e^{2\pi ij\gamma}\clF^{-1}_{\dsR}\Big[\frac{[1+j^2+(\cdot)^2]^\frac{k}{2}e^{-(2\beta-\pi)(\cdot-\frac{j}{2})}e^{-\pi(\cdot)}}{\ch[\pi\cdot]\ch[(2\beta-\pi)(\cdot-\frac{j}{2})]}\clF_\dsR\varphi_1(\cdot,\widehat{j})\Big](x)\\
&=\widetilde{S\psi^k}_1(x,\gamma),
\end{align*}
where $\psi^k=(\psi^k_1, 0, 0, 0)$ with
$$
\psi^k_1(x,\gamma):=\sum_{j\in\dsZ}e^{2\pi ij\gamma}\clF^{-1}_{\dsR}\Big[[1+j^2+(\cdot)^2]^{\frac{k}{2}}\clF_\dsR\varphi_1(\cdot,\widehat{j})\Big](x).
$$
Thus,
\begin{align*}
  \|\widetilde{S\varphi}_1\|^p_{W^{k,p}(E_1)}=\|\widetilde{S\psi^k}_1\|_{L^p(E_1)}^p
\end{align*}
and the conclusion follows from the $L^p$ boundedness of the operator $\widetilde{S}$.
\end{proof}
\subsection{A decomposition of $H^{\lowercase{p}}(D'_\beta)$}
In this section we prove that the the space $H^p(D'_\beta)$ admits for every $p\in(1,\infty)$ a decomposition 
\begin{equation}\label{DirectSumHp}
H^p(D'_\beta)=\bigoplus_{j\in\dsZ}\clH^p_j
\end{equation}
analogously to (\ref{DirectSumH2}) for the case $p=2$. Recall that, for every $j\in\dsZ$,
$$
\clH^p_j=\left\{F\in H^p(D'_\beta): F(z_1, e^{2\pi i\theta z_2})= e^{2\pi ij\theta}F(z_1,z_2)\right\}.
$$
Thus, we will prove that given a function $F$ in $H^p(D'_\beta)$, there exist functions $F_j$'s such that
$$
\lim_{N\to\infty}\bigg|\!\bigg|F-\sum_{j=-N}^N F_j\bigg|\!\bigg|_{H^p(D'_\beta)}=0,
$$
where each function $F_j$ belongs to $\clH^p_j$.

At first, we prove the result for functions which belong to the range of the operator $S$. Once again, without losing generality, we prove the result using simplified initial data and the general result will follow by linearity. Given a function $\varphi=(\varphi_1, 0, 0, 0)$ in $L^p(\partial D'_\beta)$, we define  
\begin{align*}
S_N\varphi(x+iy,e^\frac{s}{2}e^{2\pi i\gamma})&:=\sum_{j=-N}^N e^{2\pi ij\gamma}\clF^{-1}_{\dsR}\left[\frac{e^{-(\beta-\pih+s)(\cdot-\frac{j}{2})}e^{-(\pih-s+y)(\cdot)}\clF_{\dsR}\varphi_1(\cdot,\hat{j}\,)}{4\ch[\pi\cdot]\ch[(2\beta-\pi)(\cdot-\frac{j}{2})]}\right](x)\\
&=\sum_{j=-N}^N S_j\varphi(x+iy,e^{\frac{s}{2}}e^{2\pi i j\gamma}).
\end{align*}
Notice that each function $S_j\varphi$ trivially belongs to $\clH^p_j$.
\begin{prop}\label{SumNormConvergence}
Let $\varphi=(\varphi_1, 0, 0, 0)$ be a function in $L^p(D'_\beta)$, $p\in(1,\infty)$. Then, 
$$
\lim_{N\to\infty}\|S\varphi-S_N\varphi\|_{H^p(D'_\beta)}=0.
$$
\end{prop}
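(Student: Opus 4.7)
The plan is to identify the partial sum $S_N\varphi$ as $S$ applied to the $N$-th Fourier partial sum of $\varphi_1$ in the angular variable $\gamma$, and then to transfer the $L^p$-convergence of Fourier partial sums on $\mathbb{T}$ to the $H^p$-convergence via the continuity of $S$ established in Theorem \ref{HolomorphicityHp}. Concretely, introduce on $L^p(\dsR\times\dsT)$ the truncation operator
$$
(P_N\psi)(x,\gamma)\ =\ \sum_{|j|\leq N}\clF_\dsR^{-1}\big[\clF_\dsR\psi(\cdot,\widehat{j}\,)\big](x)\,e^{2\pi i j\gamma},
$$
which only acts in the $\gamma$-variable. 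Inspecting the definitions of $S\varphi$ (via \eqref{SimplySFGeneral}) and of $S_N\varphi$, one reads off the identity $S_N\varphi=S\bigl(P_N\varphi_1,0,0,0\bigr)$, since $P_N$ simply retains the terms with $|j|\leq N$ in the expansion defining $S\varphi$.

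Once this identification is in hand, Theorem \ref{HolomorphicityHp} gives
$$
\|S\varphi-S_N\varphi\|_{H^p(D'_\beta)}\ =\ \|S(\varphi_1-P_N\varphi_1,0,0,0)\|_{H^p(D'_\beta)}\ \leq\ C_p\,\|\varphi_1-P_N\varphi_1\|_{L^p(\dsR\times\dsT)},
$$
so the whole question reduces to proving that $P_N\varphi_1\to\varphi_1$ in $L^p(\dsR\times\dsT)$ for every $\varphi_1\in L^p(\dsR\times\dsT)$, $p\in(1,\infty)$.

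For this convergence I would invoke the classical M.~Riesz theorem, which asserts that the partial sum operators on $L^p(\dsT)$ are uniformly bounded for $p\in(1,\infty)$ (equivalently, the $L^p$-boundedness of the Hilbert transform on $\dsT$). By Fubini this transfers to a uniform bound for $P_N$ as operators on $L^p(\dsR\times\dsT)$. On the dense subset of functions that are trigonometric polynomials in $\gamma$ with coefficients in $C_c^\infty(\dsR)$, $P_N\psi=\psi$ as soon as $N$ exceeds the degree, so convergence is trivial. A standard $3\varepsilon$ argument combining this density with the uniform boundedness of $\{P_N\}$ then yields $\|P_N\varphi_1-\varphi_1\|_{L^p(\dsR\times\dsT)}\to 0$ for every $\varphi_1\in L^p(\dsR\times\dsT)$.

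The only genuinely nontrivial ingredient is the classical M.~Riesz theorem on $\dsT$; once that is granted, the argument is a short assembly: rewrite $S_N=S\circ P_N$, apply the continuity of $S$ from $L^p(\partial D'_\beta)$ to $H^p(D'_\beta)$, and conclude by $L^p$-convergence of Fourier partial sums in the $\gamma$-variable. The general case (without the simplifying assumption that only the first component of $\varphi$ is non-zero) is then obtained by linearity, splitting $\varphi=(\varphi_1,\varphi_2,\varphi_3,\varphi_4)$ into four pieces and running the same argument on each component.
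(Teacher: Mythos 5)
Your proof is correct and follows essentially the same route as the paper: identify $S_N\varphi$ as $S$ applied to the $N$-th angular Fourier partial sum $\varphi^{(N)}=(\varphi_1^{(N)},0,0,0)$, invoke the boundedness of $S:L^p(\partial D'_\beta)\to H^p(D'_\beta)$ from Theorem \ref{HolomorphicityHp}, and reduce everything to $\|\varphi_1-\varphi_1^{(N)}\|_{L^p(\dsR\times\dsT)}\to 0$. The only (immaterial) difference is that you establish this last convergence via uniform boundedness of the partial-sum operators plus density, whereas the paper fixes $x$, uses one-dimensional $L^p(\dsT)$ convergence, and concludes by dominated convergence --- your version is, if anything, slightly more explicit about the M.~Riesz input that the dominated-convergence step tacitly requires.
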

\begin{proof}
For almost every function $x\in\dsR$, the function $\varphi_1(x,\cdot)$ is in $L^p(\dsR)$. Thus, the $L^p$ convergence of one-dimensional Fourier series guarantees that
$$
\lim_{N\to\infty}\int_0^1 |\varphi_1(x,y)-\varphi_1^{(N)}(x,\gamma)|^p\ d\gamma=0
$$  
where $\varphi_1^{(N)}(x,\gamma)=\sum_{j=-N}^N \varphi_1(x,\hat{j}\,)e^{2\pi ij\gamma}$ and the limit holds almost everywhere. By means of Lebesgue's dominated convergence theorem, we can conclude that
\begin{align*}
  \lim_{N\to\infty}\int_{\dsR}\int_0^1 |\varphi_1(x,\gamma)-\varphi_1^{(N)}(x,\gamma)|^p\ d\gamma dx&=\int_{\dsR}\lim_{N\to\infty}\int_0^1 |\varphi_1(x,\gamma)-\varphi_1^{(N)}(x,\gamma)|^p\ d\gamma dx\\
  &=0.
\end{align*}
Thus we conclude that $\|\varphi-\varphi^{(N)}\|_{L^p}\to 0$ as $N$ tends to $+\infty$.
where $\varphi^{(N)}=(\varphi_1^{(N)}, 0, 0, 0)$.
By definition, see \eqref{SimplySFGeneral}, it holds
\begin{align*}
S[\varphi^{(N)}](x+iy,e^{\frac{s}{2}}e^{2\pi i\gamma})&=\sum_{j=-N}^{N}e^{2\pi ij\gamma}\clF^{-1}_{\dsR}\left[\frac{e^{-(\beta-\pih+s)(\cdot-\frac{j}{2})}e^{-(\pih-s+y)(\cdot)}\clF_{\dsR}\varphi_1(\cdot,\hat{j}\,)}{4\ch[\pi\cdot]\ch[(2\beta-\pi)(\cdot-\frac{j}{2})]}\right](x)\\
&=S_N\varphi(x+iy,e^{\frac{s}{2}}e^{2\pi i\gamma})\\
&=\sum_{j=-N}^N S_j\varphi(x+iy,e^{\frac{s}{2}}e^{2\pi i\gamma}).
\end{align*}
Finally, using estimate (\ref{SFHp}), we get
\begin{align*}
  \lim_{N\to\infty}\|S\varphi-S_N \varphi\|_{H^p(D'_\beta)}&=\lim_{N\to\infty}\|S\varphi-S[\varphi^{(N)}]\|_{H^p(D'_\beta)}\leq C_p \lim_{N\to\infty}\|\varphi-\varphi^{(N)}\|_{L^p(\partial D'_\beta)}= 0.
\end{align*}
The proof is complete.
\end{proof}
To obtain (\ref{DirectSumHp}) it remains to prove that the operator $S$ is surjective on $H^p(D'_\beta)$. We already know this is the case for $p=2$; the general case $p\in(1,\infty)$ will follow as a corollary of the following proposition. 
\begin{prop}\label{DensityHp}
For every $p$ in $(1,\infty)$, we have
$$
\overline{H^2(D'_\beta)\cap H^p(D'_\beta)}^{\norm{\cdot}{H^p}} = H^p(D'_\beta).
$$
\end{prop}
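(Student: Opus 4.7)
The plan is to reduce the density statement to the reproducing identity $F=S\widetilde F$ for every $F\in H^p(D'_\beta)$, where $S$ is the operator defined via integration against the Szeg\H{o} kernel in Theorem \ref{HolomorphicityHp}. Once this identity is in hand, the density follows from a standard approximation: pick $\varphi_n\in\clC^\infty_0(\p D'_\beta)$ with $\varphi_n\to\widetilde F$ in $L^p(\p D'_\beta)$, which is possible by $L^p$-density of compactly supported smooth functions. Since $\clC^\infty_0(\p D'_\beta)\subset L^p(\p D'_\beta)\cap L^2(\p D'_\beta)$, Proposition \ref{SFH2} places $S\varphi_n$ in $H^2(D'_\beta)$ while Theorem \ref{HolomorphicityHp} places $S\varphi_n$ in $H^p(D'_\beta)$, so $S\varphi_n\in H^2(D'_\beta)\cap H^p(D'_\beta)$. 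The $L^p$-boundedness of $S$ from Theorem \ref{HolomorphicityHp} then gives $\|S\varphi_n-S\widetilde F\|_{H^p}\leq C\|\varphi_n-\widetilde F\|_{L^p}\to 0$, so $S\varphi_n\to S\widetilde F=F$ in $H^p$, which is the required density.

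To prove the reproducing identity $F=S\widetilde F$, by Theorem \ref{LimitSF1} the boundary trace of $S\widetilde F$ is $\widetilde S\widetilde F$, so it suffices to show $\widetilde S\widetilde F=\widetilde F$ in $L^p(\p D'_\beta)$; once boundary values agree, $F$ and $S\widetilde F$ must coincide by the uniqueness theorem for $H^p$-functions determined by their distinguished-boundary values (a consequence of the pointwise interior bound in Proposition \ref{Weak*Hp}(ii) applied to the difference). I would decompose $F$ in Fourier modes along $\gamma\in\dsT$ using an $H^p$-analog of Proposition \ref{Barrett}: $F=\sum_j F_j$ with $F_j=f_j(z_1)z_2^j\in\clH^p_j$, and reduce the identity $\widetilde S\widetilde F=\widetilde F$ to the single-mode statements $\widetilde S\widetilde F_j=\widetilde F_j$. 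For each $F_j\in\clH_j^p$, this reduces further to the analogous reproducing identity $S_{S_\beta}\widetilde f_j=f_j$ on a weighted Hardy space of the strip, which for $p=2$ is the content of Theorem \ref{StripRKHS} and for general $p$ is suggested by the explicit Fourier-multiplier form of $S_{S_\beta}$ in \eqref{SFstrip} together with the Paley--Wiener relation $\widehat{\widetilde f_\pm}(\xi)=e^{\mp\beta\xi}\widehat{f_0}(\xi)$ for the boundary values: the symbol of $S_{S_\beta}$ then acts as the identity on the Fourier side.

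The main obstacle is establishing the $L^p$-Paley--Wiener relation on the strip for $f\in H^p(S_\beta)$ when $p\neq 2$. For $p=2$ this is Theorem \ref{PWstrip}, but for general $p$ the Fourier transform of an $L^p$ boundary datum is only a tempered distribution, so the identity must be reinterpreted either by expressing $S_{S_\beta}$ via Mihlin's multiplier theorem (as already done in Theorem \ref{SzegoStriscia}) and tracking the symbol, or by approximation from $L^p\cap L^2$ combined with the closedness of the range of $\widetilde S$ on $L^p$. The subsequent passage from the single-mode identities $\widetilde S\widetilde F_j=\widetilde F_j$ to the full identity $\widetilde S\widetilde F=\widetilde F$ on the worm requires controlling the Fourier sum in $\gamma$ in the $L^p$-topology, for which M.~Riesz's theorem on the $L^p$-boundedness of the one-dimensional Fourier partial-sum operator ($1<p<\infty$) is the appropriate tool, used in combination with the $L^p$-continuity of $\widetilde S$ from Theorem \ref{t:Lp} to interchange the summation with $\widetilde S$.
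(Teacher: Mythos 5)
Your plan rests on the reproducing identity $F=S\widetilde F$ for an arbitrary $F\in H^p(D'_\beta)$, but in the paper's logical order that identity is Corollary \ref{SurjectiveS}, which is deduced \emph{from} Proposition \ref{DensityHp}; as written, your argument is therefore circular relative to everything available at this point in the development. The sub-steps you propose for establishing the identity independently do not close the circle. First, the uniqueness claim --- that an $H^p$ function is determined by its distinguished-boundary values --- does not follow from the interior bound of Proposition \ref{Weak*Hp}$(ii)$: that bound controls $F$ on compacta by the $H^p$ norm, not by the $L^p$ norm of $\widetilde F$, and for a general $F\in H^p$ the boundary datum is so far only a weak-$\ast$ limit of a subsequence; a Poisson-type representation of interior values in terms of $\widetilde F$ is precisely what is missing. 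Second, the ``$H^p$-analog of Proposition \ref{Barrett}'' with norm convergence is the decomposition \eqref{DirectSumHp}, which the paper obtains only after Corollary \ref{SurjectiveS}, hence again downstream of the statement you are trying to prove. The $L^p$ Paley--Wiener difficulty you flag is real, but it is secondary to this structural circularity.

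The idea you are missing is an elementary damping trick: set $G^{\varepsilon}(z_1)=(1+\varepsilon[2\beta+iz_1])^{-1}$. This function is holomorphic and bounded on the relevant strip, tends to $1$ locally as $\varepsilon\to0$, and decays like $|\re z_1|^{-1}$ at infinity, so $FG^{\varepsilon}\in H^2(D'_\beta)\cap H^p(D'_\beta)$ for each fixed $\varepsilon>0$; for such functions the identity $FG^{\varepsilon}=S[\widetilde{FG^{\varepsilon}}]$ \emph{is} available from the $L^2$ theory already in place. One then shows $\|F-FG^{\varepsilon}\|_{H^p(D'_\beta)}\to0$ as $\varepsilon\to 0^+$ by combining Fatou's lemma, the $L^p$ bound for $S$ from Theorem \ref{HolomorphicityHp}, and dominated convergence applied to $\widetilde F(G^\delta-G^\varepsilon)$ on the distinguished boundary. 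This route needs no reproducing identity on all of $H^p$; that identity, and with it the norm-convergent mode decomposition, can then be harvested afterwards as corollaries, which is exactly how the paper proceeds.
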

\begin{proof}
For every $\varepsilon>0$ and $z_1\in \mathcal{S}_\beta$ consider the function
$$
G^{\varepsilon}(z_1)=\frac{1}{1+\varepsilon[2\beta+iz_1]}.
$$
It can be proved that, for every fixed $\varepsilon>0$ and $F\in H^p(D'_\beta)$, the function $F\cdot G^\varepsilon$ is in $H^2(D'_\beta)\cap H^p(D'_\beta)$, thus $FG^\varepsilon=S[\widetilde{FG^\varepsilon}]$. Notice that $G^\varepsilon$ admits a continuous extension to $\overline{D'_\beta}$, therefore $\widetilde{FG^\varepsilon}=\widetilde{F}G^\varepsilon$, where $\widetilde{F}$ is the weak-$\ast$ limit of $F$ (see Proposition \ref{Weak*Hp}). Now,
\begin{align*}
  \lim_{\varepsilon\to 0^+}\|F-FG^\varepsilon\|^p_{H^p(D'_\beta)}&\leq\lim_{\varepsilon\to 0^+}\sup_{(t,s)}\int_0^1\int_\dsR|(F-FG^\varepsilon)[x\pm i(s+t), e^{\pm\frac{s}{2}}e^{2\pi i\gamma}]|^p\ dxd\gamma\\
  &\ \ \ +\lim_{\varepsilon\to 0^+}\sup_{(t,s)}\int_0^1\int_\dsR|(F-FG^\varepsilon)[x\pm i(s-t), e^{\pm\frac{s}{2}}e^{2\pi i\gamma}]|^p\ dxd\gamma.
\end{align*}
We focus on one of these term; the computation for the other terms is similar. Therefore, using Fatou's lemma,
\begin{align*}
  &\lim_{\varepsilon\to 0^+}\sup_{(t,s)}\int_0^1\int_\dsR |(F-FG^\varepsilon)[x+i(s+t), e^{\frac{s}{2}}e^{2\pi i\gamma}]|^p\ dxd\gamma\\
  &\ \ =\lim_{\varepsilon\to 0^+}\sup_{(t,s)}\int_0^1\int_\dsR\big|F[x+i(s+t),e^{\frac{s}{2}}e^{2\pi i\gamma}]\big[1-G^{\varepsilon}[x+i(s+t)]\big]\big|^p\ dxd\gamma\\
  &\ \ \leq\lim_{\varepsilon\to 0^+}\sup_{(t,s)}\liminf_{\delta\to 0^+}\int_0^1\int_\dsR\big|F[x+i(s+t),e^{\frac{s}{2}}e^{2\pi i\gamma}]\big[(G^\delta-G^{\varepsilon})[x+i(s+t)]\big]\big|^p\ dxd\gamma\\
  &\ \ =\lim_{\varepsilon\to 0^+}\sup_{(t,s)}\liminf_{\delta\to 0^+}\int_0^1\int_\dsR \big|S[\widetilde{F}(G^\delta-G^\varepsilon)][x+i(s+t),e^{\frac{s}{2}}e^{2\pi i\gamma}]\big|^p\ dxd\gamma\\
  &\ \ \leq \lim_{\varepsilon\to 0^+}\sup_{(t,s)}\liminf_{\delta\to 0^+}\|S[\widetilde{F}(G^\delta-G^\varepsilon)]\|^p_{H^p(D'_\beta)}\\
  &\ \ \leq C_p \lim_{\varepsilon\to 0^+}\liminf_{\delta\to 0^+}\|\widetilde{F}(G^\delta-G^\varepsilon)\|_{L^p(\partial D'_\beta)}\\
  &\ \ =0,
\end{align*}
where in the last two lines we used the boundedness of the operator $S$ and the dominated convergence theorem. The proof is complete.
\end{proof}
\begin{cor}\label{SurjectiveS}
Let $F$ be a function in $H^p(D'_\beta)$, $p\in(1,\infty)$. Then, there exists a function $\varphi$ in $L^p(\partial D'_\beta)$ such that $F=S\varphi$.
\end{cor}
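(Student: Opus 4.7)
The plan is to take the weak-$\ast$ boundary value $\widetilde{F}$ granted by Proposition \ref{Weak*Hp} and show that $\varphi = \widetilde{F}$ does the job, exploiting the approximation $F G^\varepsilon \to F$ already established in the proof of Proposition \ref{DensityHp}.

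First, by Proposition \ref{Weak*Hp}\,(i), $F \in H^p(D'_\beta)$ admits a boundary value function $\widetilde{F} \in L^p(\partial D'_\beta)$, obtained as a weak-$\ast$ subsequential limit of $F_{(t,s)}$. For each $\varepsilon>0$ I would introduce the cut-off $G^\varepsilon(z_1)=\frac{1}{1+\varepsilon[2\beta+iz_1]}$ used in Proposition \ref{DensityHp}. That proof already observes that $FG^\varepsilon\in H^2(D'_\beta)\cap H^p(D'_\beta)$, that $G^\varepsilon$ extends continuously to $\overline{D'_\beta}$, and therefore that the weak-$\ast$ boundary value of $FG^\varepsilon$ is precisely $\widetilde{F}\cdot G^\varepsilon\big|_{\partial D'_\beta}$. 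Since $FG^\varepsilon\in H^2(D'_\beta)$, the $L^2$ reproducing identity established at the end of Section \ref{L2} gives
$$
FG^\varepsilon = S\bigl[\widetilde{F}\,G^\varepsilon\bigr].
$$

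Next I would pass to the limit $\varepsilon\to 0^+$ on both sides. On the left, Proposition \ref{DensityHp} already yields $\|F-FG^\varepsilon\|_{H^p(D'_\beta)}\to 0$. On the right, a direct computation on each component $E_\ell$ shows that the real part of $1+\varepsilon[2\beta+i\zeta_1]$ is bounded below by a positive constant uniformly in $\varepsilon$ and $\zeta_1$ (using $\beta>\pih$), so $|G^\varepsilon|\le 1$ on $\partial D'_\beta$ and $G^\varepsilon\to 1$ pointwise. Lebesgue's dominated convergence theorem then gives $\widetilde{F}\,G^\varepsilon\to \widetilde{F}$ in $L^p(\partial D'_\beta)$. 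By the boundedness of $S:L^p(\partial D'_\beta)\to H^p(D'_\beta)$ proved in Theorem \ref{HolomorphicityHp}, this forces $S[\widetilde{F}\,G^\varepsilon]\to S\widetilde{F}$ in $H^p(D'_\beta)$. Uniqueness of limits yields $F=S\widetilde{F}$, so the corollary holds with $\varphi:=\widetilde{F}$.

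The only nontrivial step is the identification $(FG^\varepsilon)^\sim=\widetilde{F}\,G^\varepsilon$, but this is purely a consequence of $G^\varepsilon\in\clC(\overline{D'_\beta})$ together with the weak-$\ast$ characterization of $\widetilde{F}$, and it was already used implicitly in Proposition \ref{DensityHp}. Everything else is a mechanical combination of the density statement of Proposition \ref{DensityHp}, the $L^2$ reproducing property, and the $L^p$-continuity of $S$.
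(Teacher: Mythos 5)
Your argument is correct and is precisely the route the paper intends: the corollary is left as an immediate consequence of Proposition \ref{DensityHp}, whose proof already establishes $FG^\varepsilon=S[\widetilde{F}G^\varepsilon]$ and $\|F-FG^\varepsilon\|_{H^p(D'_\beta)}\to 0$, and you complete it in the expected way by combining the uniform bound $|G^\varepsilon|\le 1$ with dominated convergence on $\partial D'_\beta$ and the $L^p$-continuity of $S$ from Theorem \ref{HolomorphicityHp} to conclude $F=S\widetilde{F}$. No gaps.
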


\begin{rmk}
Theorem \ref{LimitSF1} shows that every function in the range of $S$ tends to its boundary values in norm. The previous corollary allows to conclude that this is true for every element of $H^p(D'_\beta)$, $p\in(1,\infty)$.
\end{rmk}
\begin{rmk}
Proposition \ref{SumNormConvergence} and Corollary \ref{SurjectiveS} together prove the decomposition (\ref{DirectSumHp}).
\end{rmk}

\subsection{Pointwise convergence}\label{PointConv}
We conclude the section showing that
an appropriate restriction of a function $F$ in $H^p(D'_\beta)$,
$p\in(1,\infty)$, converges to its boundary value function also pointwise almost everywhere . As usual, we prove
our results in a simplified situation and the general case follows by
linearity. Let $\varphi=(\varphi_1, 0, 0, 0)$ be a function in $L^p(\partial
D'_\beta)$, then we proved that
$$
\lim_{(t,s)\to (\pih,\beta-\pih)}\int_{\dsR}\int_0^1 
\Big|S\varphi[x+i(s+t),e^{\frac{s}{2}}e^{2\pi i\gamma}]-\widetilde{S\varphi}_1[x+i\beta, e^{\frac{1}{2}(\beta-\pih)}e^{2\pi i\gamma}]\Big|^p\ d\gamma dx=0.
$$
In general, to prove a pointwise convergence result, we expect we
need to put some restrictions on the parameters $t$ and $s$. For
example, also in the simpler case of the polydisc
$D^2(0,1)=D(0,1)\times D(0,1)$, we are able to prove the almost
everywhere existence of the pointwise radial limit 
$$
\lim_{(r_1, r_2)\to (1,1)} G(r_1 e^{2\pi i\theta},r_2 e^{2\pi i\gamma})
$$ 
for a function $G$ in $H^p(D^2)$ under the hypothesis that the ratio
$\frac{1-r_1}{1-r_2}$ is bounded (see, for example, \cite[Chapter 2,
Section 2.3]{MR0255841}).  

At the moment, we are able to prove a pointwise convergence result
which depends only on one parameter. It would be interesting to
determine a larger approach region to the distinguished boundary
$\partial D'_\beta$ and discuss non-tangential convergence.

We need the following lemma which it is not hard to prove using the results contained in Section \ref{strip}.
\begin{lem}\label{l:strip}
Let $S_\beta$ be the strip $S_\beta=\{z=x+iy\in\dsC:|y|<\beta\}$. Let $\varphi=(\varphi_+,\varphi_-)$ be a function in $L^p(\partial S_\beta)$, $p\in(1,\infty)$. Then, the function 
$$
S_{S_\beta}\varphi(x+iy)=\clF^{-1}\left[\frac{e^{-(y+\beta)(\cdot)}\widehat{\varphi_+}+e^{-(y-\beta)(\cdot)}\widehat{\varphi_-}}{4\ch[\pi(\cdot)]\ch[(2\beta-\pi)(\cdot-\frac{j}{2})]}\right](x)
$$
belongs to $H^p(S_\beta)$ for every integer $j$.
\end{lem}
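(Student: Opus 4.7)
My plan is to show separately that $S_{S_\beta}\varphi$ is holomorphic on $S_\beta$ and that its $H^p(S_\beta)$-norm is controlled by $\|\varphi\|_{L^p(\partial S_\beta)}$. By linearity it suffices to treat the two summands (the one involving $\varphi_+$ and the one involving $\varphi_-$) independently; I will describe the argument for the $\varphi_+$-term, the other being analogous. I may also assume $\varphi_+\in\clC^\infty_0(\dsR)$ and pass to the general case by the density of Schwartz functions and the $L^p$ estimate I am about to establish.

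For the holomorphicity, I will exhibit the $\varphi_+$-term as the Fourier--Laplace type integral
$$
F_+(z)=\frac{1}{2\pi}\int_{\dsR}\frac{\widehat{\varphi_+}(\xi)\,e^{i z\xi}\,e^{-\beta\xi}}{4\ch[\pi\xi]\ch[(2\beta-\pi)(\xi-\tfrac{j}{2})]}\,d\xi,
$$
and use the exponential decay of the denominator, which behaves like $e^{\pi|\xi|}e^{(2\beta-\pi)|\xi-j/2|}\sim e^{2\beta|\xi|}$ for large $|\xi|$, to conclude that the integrand is absolutely integrable uniformly on compact subsets of $S_\beta$ once the factor $e^{i(x+iy)\xi-\beta\xi}=e^{ix\xi-(y+\beta)\xi}$ is expanded. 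This gives both the holomorphy of $F_+$ on $S_\beta$ and the possibility of differentiating under the integral sign, exactly as in the Paley--Wiener Theorem \ref{PWstrip} or in Proposition \ref{SFH2}.

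For the norm bound I will apply Mihlin's multiplier theorem to the family of symbols
$$
m^+_y(\xi)=\frac{e^{-(y+\beta)\xi}}{4\ch[\pi\xi]\ch[(2\beta-\pi)(\xi-\tfrac{j}{2})]},\qquad |y|<\beta.
$$
The key observation is that for $|y|<\beta$ one has $(y+\beta)\in(0,2\beta)$, so the growth of $e^{-(y+\beta)\xi}$ as $\xi\to-\infty$ is strictly dominated by the denominator's growth $\sim e^{2\beta|\xi|}$, while as $\xi\to+\infty$ the numerator itself decays; hence $m^+_y$ is a smooth, bounded function of $\xi$. Checking that $|\xi\,\partial_\xi m^+_y(\xi)|$ is bounded uniformly in $\xi$ and in $y$ (for $y$ ranging in any closed subinterval of $(-\beta,\beta)$, and in fact all the way up to the boundary) is a straightforward computation that I will perform along the same lines as in the proof of Proposition \ref{TransferredLpBounds}. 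Mihlin's theorem then yields
$$
\|F_+(\cdot+iy)\|_{L^p(\dsR)}\leq C_{p,j}\|\varphi_+\|_{L^p(\dsR)},
$$
with $C_{p,j}$ independent of $y$, and taking the supremum in $y$ together with the analogous $\varphi_-$-estimate gives $S_{S_\beta}\varphi\in H^p(S_\beta)$.

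The only mild subtlety, and the step on which I will spend the most care, is the uniformity of the Mihlin constants as $y\to\pm\beta$: one must verify that the derivative $\partial_\xi m^+_y(\xi)$ does not concentrate its mass in a way that blows up as $y+\beta\to 2\beta$. This will be handled by splitting $\dsR$ into the regions $\xi\geq 0$ and $\xi\leq 0$ and computing explicitly, noting that in the ``bad'' region $\xi\to-\infty$ the ratio $e^{-(y+\beta)\xi}/\ch[2\beta\xi]\approx e^{(y-\beta)|\xi|}$ is itself bounded uniformly in $y\in[-\beta,\beta]$. The remaining factor $\ch[2\beta\xi]/(\ch[\pi\xi]\ch[(2\beta-\pi)(\xi-j/2)])$ is a smooth function which, together with its derivatives, is bounded on $\dsR$ for each fixed $j\in\dsZ$, so Mihlin's hypotheses are met. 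With this uniform estimate in hand, the lemma follows.
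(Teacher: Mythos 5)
Your proposal is correct and follows essentially the route the paper intends: the paper gives no written proof, remarking only that the lemma "is not hard to prove using the results contained in Section \ref{strip}", and your argument — holomorphy via the Paley--Wiener-type representation with the rapidly decaying denominator, plus Mihlin's theorem with constants uniform in $y\in[-\beta,\beta]$ — is precisely the machinery of Theorem \ref{Lpstriscia} and Proposition \ref{TransferredLpBounds}. Your factorization of the symbol as $\frac{e^{-(y+\beta)\xi}}{\ch[2\beta\xi]}$ (the strip Szeg\H{o} multiplier, already handled uniformly in $y$ in Section \ref{strip}) times the bounded, $j$-dependent correction $\frac{\ch[2\beta\xi]}{4\ch[\pi\xi]\ch[(2\beta-\pi)(\xi-j/2)]}$ is exactly the right way to make the uniformity as $y\to\pm\beta$ transparent, including the derivative condition and not just the boundedness of the symbol.
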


\begin{thm}\label{SimplyPC}
Let $\varphi=(\varphi_1, \varphi_2, \varphi_3, \varphi_4)$ be a function in $L^p(\partial D'_\beta)$, $p\in(1,\infty)$. Then,
\begin{equation}\label{LimitPSF1}
\lim_{t\to\beta^-}S\varphi[x+it,e^{\frac{t}{2\beta}(\beta-\pih)}e^{2\pi i\gamma}]=\widetilde{S\varphi}_1[x+i\beta, e^{\frac{1}{2}(\beta-\pih)}e^{2\pi i\gamma}]
\end{equation}
for almost every $(x,\gamma)\in \dsR\times\dsT$.
\end{thm}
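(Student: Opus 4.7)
My plan is to reduce the two-variable pointwise convergence problem to a mode-by-mode Fatou-type statement for $H^p(S_\beta)$ via Lemma \ref{l:strip}, and then to globalize by a one-parameter maximal function estimate combined with a density argument.

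By linearity, I would first reduce to $\varphi=(\varphi_1,0,0,0)$. Setting $y=t$ and $s=\frac{t}{\beta}(\beta-\pih)$ in \eqref{SimplySFGeneral} and regrouping the exponential factors, I obtain
\[
S\varphi\bigl(x+it,e^{\frac{t}{2\beta}(\beta-\pih)}e^{2\pi i\gamma}\bigr)=\sum_{j\in\dsZ}e^{2\pi ij\gamma}\,e^{\frac{j(\beta-\pih)}{2}}\,e^{a_jt}\,\Phi_j(x+it),
\]
where $a_j:=\frac{j(\beta-\pih)}{2\beta}$ and
\[
\Phi_j(z_1):=\tfrac14\clF_{\dsR}^{-1}\!\Bigl[\tfrac{e^{-(\im z_1+\beta)\xi}\clF_{\dsR}\varphi_1(\xi,\widehat j)}{\ch[\pi\xi]\ch[(2\beta-\pi)(\xi-j/2)]}\Bigr](\re z_1).
\]
By Lemma \ref{l:strip} applied to the datum $(\varphi_1(\cdot,\widehat j),0)\in L^p(\p S_\beta)$, each $\Phi_j$ belongs to $H^p(S_\beta)$. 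The factor $e^{a_jt}$ is absorbed by multiplying with the holomorphic bounded function $e^{-ia_jz_1}$: the product $\widetilde\Phi_j:=e^{-ia_jz_1}\Phi_j$ is again in $H^p(S_\beta)$, and its pointwise boundary trace at $\im z_1=\beta$ encodes, up to a unimodular twist in $x$ and a constant factor, the $j$th Fourier coefficient in $\gamma$ of $\widetilde{S\varphi}_1$. Hence the strip Fatou conclusion inside Theorem \ref{SzegoStriscia} yields pointwise a.e.\ convergence of each individual mode along the curve.

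To sum across modes and pass from a dense subspace to general $\varphi\in L^p(\p D'_\beta)$, I would establish the maximal function bound
\[
\|M\varphi\|_{L^p(\dsR\times\dsT)}\le C_p\|\varphi\|_{L^p(\p D'_\beta)},\qquad M\varphi(x,\gamma):=\sup_{0<t<\beta}\bigl|S\varphi(x+it,e^{\frac{t}{2\beta}(\beta-\pih)}e^{2\pi i\gamma})\bigr|.
\]
The approach is to follow the decomposition employed in the proof of Theorem \ref{LimitSF1}, replacing the $L^p$ bounds on the summability kernel and on the Calder\'on--Zygmund singular integral in the $x$-variable with their maximal counterparts as in the strip case of Theorem \ref{SzegoStriscia}, while controlling the $\gamma$-variable through the Mihlin-multiplier estimates already carried out in Proposition \ref{TransferredLpBounds}. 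On the dense subspace of data with finitely many nonzero Fourier modes and $C^\infty_0(\dsR)$ coefficients, the sum defining $S\varphi$ is finite and each summand extends continuously to $\overline{D'_\beta}$ mode-by-mode (by Remark \ref{ContinuityBoundary} on the strip combined with the continuity of $z_2^j$), so pointwise convergence along the curve is trivial there. A standard Chebyshev argument based on the maximal bound then upgrades this to a.e.\ convergence for every $\varphi\in L^p(\p D'_\beta)$.

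The main obstacle I anticipate is the uniform-in-$j$ maximal function estimate. The reason to expect success is that the chosen curve $t\mapsto(x+it,e^{\frac{t}{2\beta}(\beta-\pih)}e^{2\pi i\gamma})$ is genuinely one-parameter and the resulting multipliers depend on $j$ only through the shift $\xi\mapsto\xi-j/2$, which preserves Mihlin-type estimates, keeping the analysis inside the reach of classical singular integral theory and mirroring the one-parameter approach that is known to work for the polydisc.
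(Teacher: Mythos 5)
Your per-mode analysis is essentially the paper's: both proofs restrict to $\varphi=(\varphi_1,0,0,0)$, expand $S\varphi$ along the curve into its Fourier modes in $\gamma$, identify each mode (up to a harmless exponential prefactor) with an element of $H^p(S_\beta)$ via Lemma \ref{l:strip}, and invoke the strip Fatou theorem inside Theorem \ref{SzegoStriscia} to get a.e.\ convergence of each individual mode. Where you diverge is the globalization. The paper does \emph{not} prove a maximal inequality. Instead it observes that, by the argument of Theorem \ref{LimitSF1}, each mode difference $S^t_j\varphi$ tends to $0$ in $L^p(\dsR\times\dsT)$ as $t\to\beta^-$; combined with the a.e.\ existence of the pointwise limit of each mode, this forces $\limsup_{t\to\beta^-}|S^t_j\varphi|=0$ almost everywhere, so each exceptional set $\{\limsup_t|S^t_j\varphi|>\alpha_j\}$ is null, and the full exceptional set is controlled by summing these over $j$ with $\sum_j\alpha_j=\varepsilon$. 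No maximal operator ever appears.

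The genuine gap in your proposal is the maximal bound $\|M\varphi\|_{L^p}\le C_p\|\varphi\|_{L^p}$, which you correctly identify as the crux but do not establish. It is not a routine upgrade of the estimates already in the paper: Proposition \ref{TransferredLpBounds} gives $\sup_{y,s}\opnorm{S_{y,s}}{p}<\infty$, i.e.\ a uniform bound on a \emph{family} of multiplier operators, and this never controls the $L^p$ norm of the pointwise supremum $\sup_t|S_{y(t),s(t)}\varphi|$. To get a maximal bound you would need to dominate the $x$-kernel by a summability kernel plus a maximally truncated Calder\'on--Zygmund operator (as is done implicitly for the strip in Theorem \ref{SzegoStriscia}), \emph{and} simultaneously handle the supremum over $t$ of the $j$-dependent factors $m_s(\xi-\tfrac{j}{2})$ acting across all modes; the change-of-variables/modulation trick that makes $\lambda_s$ bounded in Proposition \ref{TransferredLpBounds} works for each fixed $s$ separately and does not commute with taking a supremum in $s$ coupled to $t$. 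So "replacing the $L^p$ bounds with their maximal counterparts" is precisely the nontrivial two-variable maximal-function problem the paper's argument is designed to avoid. Unless you supply that estimate, your density-plus-Chebyshev scheme does not close; the paper's route, which trades the maximal inequality for the mode-wise norm convergence \eqref{NormLimitJ}, is the way the result is actually obtained.
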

\begin{proof}
We prove the theorem for $\varphi=(\varphi_1, 0, 0, 0)$.
By (\ref{TSF1}), we want to prove that
\begin{align*}
L_t(x,\gamma)&=\bigg|\sum_{j\in\dsZ}e^{2\pi ij\gamma}\clF^{-1}_{\dsR}\bigg[\frac{e^{-2\beta(\cdot)}e^{j(\beta-\pih)}-e^{-(\beta+t)(\cdot)}e^{\frac{j}{2}(\beta-\pih)(1+\frac{t}{\beta})}}{4\ch[\pi(\cdot)]\ch[(2\beta-\pih)(\cdot-\frac{j}{2})]}\clF_{\dsR}\varphi_1(\cdot,\hat{j}\,)\bigg](x)\bigg|\\
&=:\bigg|\sum_{j\in\dsZ}S_j^t \varphi(x,\gamma)\bigg|\to0
\end{align*}
for almost every $(x,\gamma)$ $\in\dsR\times\dsT$ when $t$ tends to $\beta^-$. Let $\varepsilon>0$ be fixed. Then,
\begin{align*}
  \bigg|\bigg\{(x,\gamma): \limsup_{t\to\beta^-}&\, L_t(x,\gamma)>\varepsilon \bigg\}\bigg|\leq\sum_{j\in\dsZ}\bigg|\bigg\{(x,\gamma): \limsup_{t\to\beta^-}|S_j^t \varphi(x,\gamma)|>\alpha_j\bigg\}\bigg|,
\end{align*}
where the $\alpha_j$'s are positive and $\sum_{j\in\dsZ}\alpha_j=\varepsilon$. We claim that the sets in the right-hand side of the previous inequality are all of measure zero. Following the proof of Theorem \ref{LimitSF1} we obtain that
\begin{equation}\label{NormLimitJ}
\lim_{t\to\beta^-}\norm{S^t_j\varphi}{L^p(\dsR\times\dsT)}=0.
\end{equation}
Therefore, it is enough to prove the existence of the pointwise limit \begin{align*}
\lim_{t\to\beta^-}e^{2\pi i j\gamma}\clF^{-1}_{\dsR}
&\bigg[\frac{e^{-(\beta+t)(\cdot)}e^{\frac{j}{2}(\beta-\pih)(1+\frac{t}{\beta})}}{4\ch[\pi(\cdot)]\ch[(2\beta-\pi)(\cdot-\frac{j}{2})]}\clF_\dsR \varphi_1(\cdot,\hat{j}\,)\bigg](x)
\end{align*}
for almost every $(x,\gamma)\in\dsR\times\dsT$. To prove this, it is sufficient to prove that
$$
\lim_{t\to\beta^-}\clF^{-1}_{\dsR}\bigg[\frac{e^{-(\beta+t)(\cdot)}\clF_{\dsR} G(\cdot)}{4\ch[\pi(\cdot)]\ch[(2\beta-\pi)(\cdot-\frac{j}{2}]}\bigg](x)
$$
exists for almost every $x$ in $\dsR$ and for every function $G$ in $L^p(\dsR)$, $p\in(1,\infty)$. The existence of this last limit follows immediately from the lemma and Theorem \ref{SzegoStriscia}.

Analogously it can be proved the pointwise convergence of $S\varphi$ to the other components of $\partial D'_\beta$.
\end{proof} 

\begin{rmk}
We proved the previous theorem for functions that belong to the range of the operator $S$. From Proposition \ref{SurjectiveS} we can conclude that the result is true for every function in $H^p(D'_\beta), p\in(1,\infty)$.
\end{rmk}

\subsection{A density result}
At last, we use the $L^p$ boundedness of the operator $\widetilde S$ to prove that continuous functions on the closure of $D'_\beta$ are dense in $H^p(D'_\beta)$.
\begin{thm}\label{t:density}
Let $p\in(1,\infty)$. Then,
$$
\overline{H^p(D'_\beta)\cap\clC(\overline{D'_\beta})}^{\|\cdot\|_{H^p(D'_\beta)}}=H^p(D'_\beta).
$$
\end{thm}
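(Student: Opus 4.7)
The plan is to combine the surjectivity of $S$ (Corollary \ref{SurjectiveS}) with the $L^p$-boundedness of $S:L^p(\p D'_\beta)\to H^p(D'_\beta)$ already established in Theorem \ref{HolomorphicityHp}, and to verify by hand that $S$ sends a dense set of smooth boundary data into $\mathcal C(\overline{D'_\beta})$. Given $F\in H^p(D'_\beta)$, I would first apply Corollary \ref{SurjectiveS} to write $F=S\varphi$ with $\varphi\in L^p(\p D'_\beta)$. Next, I would approximate $\varphi$ in $L^p(\p D'_\beta)$ by a sequence $\{\varphi^{(n)}\}$ of the ``nice'' functions already singled out in Section \ref{Lp}: on each component $E_\ell$ of $\p D'_\beta\simeq\dsR\times\dsT$, the function $\varphi^{(n)}_\ell$ is a finite trigonometric polynomial in $\gamma$ whose $j$th coefficient lies in $\clC^\infty_0(\dsR)$. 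Such functions are dense in $L^p(\p D'_\beta)$, and Theorem \ref{HolomorphicityHp} then gives $S\varphi^{(n)}\to S\varphi=F$ in $H^p(D'_\beta)$.

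The remaining point is to show $S\varphi^{(n)}\in H^p(D'_\beta)\cap \clC(\overline{D'_\beta})$. Because each $\varphi^{(n)}_\ell$ is a finite trigonometric polynomial with Schwartz Fourier coefficients, the representation \eqref{SFGeneral} of $S\varphi^{(n)}$ collapses to a \emph{finite} linear combination of terms of the shape
$$
r^j e^{\pm\frac{j}{2}(\beta-\pih)}e^{2\pi ij\gamma}\,\clF^{-1}_\dsR\!\left[\frac{e^{-(v\pm\beta)(\cdot)}\,\widehat{\psi}_{\ell,j}(\cdot)}{\ch[\pi\cdot]\ch[(2\beta-\pi)(\cdot-\tfrac{j}{2})]}\right]\!(u),
$$
and their analogues with $v\pm(\beta-\pi)$ in the exponent, where each $\widehat{\psi}_{\ell,j}$ is a Schwartz function of $\xi$. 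The plan is to apply Lebesgue's dominated convergence to each of these integrals, exactly in the spirit of Remark \ref{ContinuityBoundary} for the strip, to obtain joint continuity in $(u,v,r,\gamma)$ up to the closure $\overline{D'_\beta}$.

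The main (and essentially only) obstacle is producing a uniform $L^1$ dominator for these integrands on $\overline{D'_\beta}$. Here I would exploit that on the closure the parameter $v=\im z_1$ lies in the compact interval $[-\beta,\beta]$ and $|z_2|$ lies in a compact subinterval of $(0,\infty)$. A direct asymptotic check shows that each multiplier
$
\frac{e^{-(v\pm\beta)\xi}}{\ch[\pi\xi]\ch[(2\beta-\pi)(\xi-j/2)]}
$
decays exponentially as $\xi\to+\infty$ and stays uniformly bounded as $\xi\to-\infty$, uniformly in $v\in[-\beta,\beta]$ and for each fixed $j$ in the finite index set. Multiplied by the Schwartz function $\widehat{\psi}_{\ell,j}$, the integrand is then dominated by an $L^1(\dsR)$ function of $\xi$ uniformly in $(z_1,z_2)\in\overline{D'_\beta}$; dominated convergence yields continuity of each term, and summing the finitely many contributions gives $S\varphi^{(n)}\in\clC(\overline{D'_\beta})$. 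Combining with Step~1 completes the proof.
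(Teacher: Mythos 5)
Your proposal is correct and follows essentially the same route as the paper: reduce to boundary data that are finite trigonometric polynomials in $\gamma$ with $\clC^\infty_0(\dsR)$ coefficients, invoke Corollary \ref{SurjectiveS} together with the $L^p$-boundedness of $S$ to pass to general $F$, and check that $S$ of such nice data is continuous up to $\overline{D'_\beta}$. The only difference is cosmetic: you carry out the dominated-convergence/domination estimate explicitly, where the paper delegates it to Lemma \ref{l:strip} and Remark \ref{ContinuityBoundary}.
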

\begin{proof}
It is enough to prove that for a given function $\varphi=(\varphi_1, 0, 0 ,0)$ where $\varphi_1(x,\gamma)=\sum_{j=-N}^N\varphi_1(x,j)e^{2\pi ij\gamma}$ with $\varphi_{1}(\cdot,j)\in \clC^{\infty}_0(\dsR)$ for every $j$, then $S\varphi$ belongs to $H^p(D'_\beta)\cap\clC(\overline{D'_\beta})$. The conclusion will follow by linearity, density and the $L^p$ boundedness of the Szeg\H{o} projector $\widetilde{S}$. We have
$$
S\varphi(x+iy, e^{\frac{s}{2}}e^{2\pi i\gamma})=\sum_{j=-N}^{N}e^{2\pi ij\gamma}\clF^{-1}_{\dsR}\left[\frac{e^{-(\beta-\pih+s)(\cdot-\frac{j}{2})}e^{-(\pih-s+y)(\cdot)}\clF_{\dsR}\varphi_1(\cdot,j)}{4\ch[\pi\cdot]\ch[(2\beta-\pi)(\cdot-\frac{j}{2})]}\right](x)
$$
and $S\varphi$ is continuous up to the boundary of $\p D'_\beta$ since each term of the sum is thanks to Lemma \ref{l:strip} and Remark \ref{ContinuityBoundary}. The proof is complete.
\end{proof}

\bibliographystyle{plain}
\bibliography{bibWormStrip}

\end{document}